\theoremstyle{definition}
\newtheorem{theorem}{Theorem}
\newtheorem{definition}[theorem]{Definition}
\newtheorem{lemma}[theorem]{Lemma}
\newtheorem{proposition}[theorem]{Proposition}
\newtheorem{remark}{Remark}
\title{Infinitely many solutions for $p$-fractional Choquard type equations involving general nonlocal nonlinearities with critical growth via the concentration compactness method
}
\author{Masaki Sakuma}
\begin{document}
\maketitle
\begin{abstract}
We prove the existence of infinitely many solutions to a fractional Choquard type equation 
\[
(-\Delta)^s_p u+V(x)|u|^{p-2}u=(K\ast g(u))g'(u)+\varepsilon_W W(x)f'(u)\quad\text{in }\mathbb{R}^N
\]
involving fractional $p$-Laplacian and a general convolution term with critical growth. In order to obtain infinitely many solutions, we use a type of the symmetric mountain pass lemma which gives a sequence of critical values converging to zero for even functionals. To assure the $(PS)_c$ conditions, we also use a nonlocal version of the concentration compactness lemma.
\vspace{1ex}\par
{\flushleft{{\bf Keywords:} Choquard equation; Fractional $p$-Laplacian; Variational method; Symmetric mountain pass lemma; Concentration compactness lemma; Critical growth}}
\end{abstract}
\section{Introduction}

In the present paper, we consider a $p$-fractional Choquard type equation of the form
\begin{equation}
(-\Delta)^s_p u+V(x)|u|^{p-2}u=\frac{1}{p_{r;s}^{\uparrow *}}(K\ast g(u))g'(u)+\varepsilon_W W(x)f'(u)\quad\text{in }\mathbb{R}^N,
\label{cho}
\end{equation}
where $s\in (0,1)$, $1<p<N/s <2pr$, $r\in (1,\infty)$. $(-\Delta)^s_p$ denotes the fractional $p$-Laplacian which may be defined up to normalization factors as
\[
(-\Delta)^s_p u(x)\coloneqq 2\lim_{\varepsilon\to +0}\int_{\mathbb{R}^N\setminus B_\varepsilon (x)}\frac{|u(x)-u(y)|^{p-2}(u(x)-u(y))}{|x-y|^{N+ps}}dy.
\]

Here, we introduce two important critical exponents
\[
\displaystyle p_{r;s}^{\downarrow *} =p\left(1-\frac{1}{2r}\right) \quad\text{and}\quad\displaystyle p_{r;s}^{\uparrow *} =\frac{pN(2r-1)}{2r(N-ps)}. 
\]
Let us note that they satisfy
\[
\frac{1}{r}+2\cdot\frac{1}{p/p_{r;s}^{\downarrow *} }=2 \quad\text{and}\quad
\frac{1}{r}+2\cdot\frac{1}{p_s^{*}/p_{r;s}^{\uparrow *}}=2,
\]
that is, 
\[
\frac{p}{p_{r;s}^{\downarrow *}}= \frac{p_s^{*}}{p_{r;s}^{\uparrow *}}=\ell_r\coloneqq\frac{2r}{2r-1},
\]
where $p_s^{*}=\displaystyle\frac{pN}{N-ps}$ denotes the fractional Sobolev critical exponent. Notice that $p<p_{r;s}^{\uparrow *}$ if and only if $N<2psr$. \par
Throughout this paper, $C$ and $C_i$ denote generic positive constants, $\|\cdot\|_q$ denotes the $L^q$ norm, $L^q_{+}= L^q_{+} (\mathbb{R}^N)$ denotes the set consisting of all positive $L^q$ functions for $q\in [1,\infty]$, $\mu_\mathcal{L}$ denotes the Lebesgue measure in $\mathbb{R}^N$, and $B_r(x)$ denotes the open ball with radius $r$ centered at $x$ in $\mathbb{R}^N$. \par
For a domain $\Omega\subset\mathbb{R}^N$ and $q\in [1,\infty)$, $L^{q,\infty}(\Omega)$ denotes the weak Lebesgue space whose integrability exponent is $q$, which is defined as follows:
\begin{align*}
L^{q,\infty}(\Omega)&\coloneqq\{f\mid \text{$f$ is measurable in $\Omega$}, \sup_{t>0} t\lambda_f (t)^{1/q}<\infty\} \\
&= \{f\mid \text{$f$ is measurable in $\Omega$}, \|f\|_{L^{q,\infty}(\Omega)}\coloneqq\sup_{s>0} s^{1/q} f^*(s)<\infty\},
\end{align*}
where
\[
\lambda_f (t)\coloneqq \mu_{\mathcal{L}}(\{x\in\Omega\mid |f(x)|>t \})\quad (t>0)
\]
is the distribution function of $f$, and
\[
f^*(s)\coloneqq \inf\{t>0\mid \lambda_f (t)\leq s\}
\]
is the (nonsymmetric) decreasing rearrangement of $f$. 
%the weak Lebesgue space can also be interpreted as a Lorentz space. 
We simply write $L^{q,\infty}= L^{q,\infty}(\mathbb{R}^N)$ and define
\[
L^{q,\infty}_\mathrm{loc}(\Omega)\coloneqq\{f\mid \text{$f$ is measurable in $\Omega$}, f|_{\Omega'}\in L^{q,\infty}(\Omega')\;(\forall \Omega':\text{compact subset of $\Omega$})\}.
\]
For more details about the weak Lebesgue spaces, see \cite{Classical Fourier Analysis}. \par
In the equation \eqref{cho}, $K\in L^{r,\infty}(\mathbb{R}^N)$ and $f,g\in C^1(\mathbb{R};[0,\infty))$ satisfy the following conditions:
\begin{enumerate}
\item[(G1)] $h(-u)=h(u)$, $h(0)=0$ and $|h'(u)|\leq C(|u|^{\hat{p}_g-1}+|u|^{p_g-1})$ ($\forall u\in\mathbb{R}$) for some $p_g,\hat{p}_g$ with $(p_{r;s}^{\downarrow *} <)\, p<\hat{p}_g\leq p_g<p_{r;s}^{\uparrow *}$ and $C>0$, where $h(t)\coloneqq g(t)-|t|^{p_{r;s}^{\uparrow *}}$.
\item[(G2)] $0<\alpha_g h(u)\leq u h'(u)$ ($\forall u\neq 0$) for some $\alpha_g\in (p_{r;s}^{\downarrow *}, p_{r;s}^{\uparrow *}]$. 
%(Ambrosetti-Rabinowitz type condition)
%hをgにすると原点での様子を見ると臨界冪の定数倍で挟まれているのしかなくなる。足したときのh(u)の上からの評価は小さい方に吸収されるからupper criticalと足しても結局1/\alpha_gで抑えられる
\item[(F)] $f(-u)=f(u)$, $f(0)=0$ and $0<\alpha_f f(u)\leq u f'(u)\leq C(|u|^{q_1}+|u|^{q_2})$ ($\forall u\in\mathbb{R}\setminus\{0\}$) for some $q_1,q_2\in (1,p)$, $\alpha_f>1$ and $C>0$.
\item[(K)] $K\in L^{r,\infty}_{+}(\mathbb{R}^N)$ and $K|_{\mathbb{R}^N\setminus B_\varepsilon (0)}\in L^{\infty}(\mathbb{R}^N\setminus B_\varepsilon (0))$ for any $\varepsilon>0$. 
%$K'\in L^{r^*,\infty}_\mathrm{loc} (\mathbb{R}^N)$ where $K'(x)=|x|K(x)$ and $r^*=\displaystyle \frac{Nr}{N-r}$ if $r<N$; while $r^*=\infty$ if $r\geq N$.
%\item[(K2)] $K(\theta x)\geq \theta^{-N/r}K(x)$ for $\theta\geq 1$ large enough uniformly for $x\in\mathbb{R}^N$.
\end{enumerate}
The typical example of $K$ is the Riesz potential $I_\alpha$ (for $\alpha=N/r$) defined by
\[
I_\alpha(x)=\frac{c_{N,\alpha}}{|x|^\alpha},\quad c_{N,\alpha}=\frac{\Gamma(\alpha/2)}{2^{N-\alpha}\pi^{N/2}\Gamma((N-\alpha)/2)}.
\]
However, let us notice that $N-\alpha$ takes the place of $\alpha$ in the definition of $I_\alpha$ in some literature. \par
Let us define the best constant
\[
S_{D^{s,p}}\coloneqq\inf_{u\in D^{s,p}(\mathbb{R^N})\setminus\{0\}}\frac{\|u\|_{D^{s,p}}^p}{\|u\|_{p_s^{*}}^p}
\]
for the Sobolev embedding $D^{s,p}(\mathbb{R}^N)\hookrightarrow L^{p_s^*} (\mathbb{R}^N)$, where $D^{s,p}= D^{s,p} (\mathbb{R}^N)$ is the completion of the space $C_c^\infty (\mathbb{R}^N)$ consisting of smooth functions with compact supports with respect to
\[
\|u\|_{D^{s,p}}\coloneqq \int_{\mathbb{R}^N} \int_{\mathbb{R}^N}\frac{|u(x)-u(y)|^p}{|x-y|^{N+ps}}dxdy.
\]
Especially, in the case $p=2$, we have a simple explicit formula
\[
\displaystyle\inf_{u\in D^{s,2}(\mathbb{R^N})\setminus\{0\}}\frac{\int_{\mathbb{R}^N}|(-\Delta)^{s/2}u|^2 dx}{(\int_{\mathbb{R}^N}|u|^{2^{*}_s} dx)^{2/2^{*}_s}}=\frac{(4\pi)^{s}\Gamma(N/2+s)\Gamma(N/2)^{2s/N}}{\Gamma(N/2-s)\Gamma(N)^{2s/N}}.
\]
% N(2_s^*-1)/2_s^*=N/2+s, N/2_s^*=N/2-s
We also define another important best constant
\[
S_{K}\coloneqq\inf_{u\in D^{s,p}\setminus\{0\}}\frac{\|u\|_{D^{s,p}}^p}{\left(\int_{\mathbb{R}^N}(K\ast |u|^{p_{r;s}^{\uparrow *}}) |u|^{p_{r;s}^{\uparrow *}}dx\right)^{p/(2\cdot p_{r;s}^{\uparrow *})}}.
\]
In the equation \eqref{cho}, $V:\mathbb{R}^N\to\mathbb{R}$ and $W:\mathbb{R}^N\to\mathbb{R}$ satisfy the following conditions:
\begin{enumerate}
\item[(V)] $V_{+}\in L^\infty_\mathrm{loc}(\mathbb{R}^N)$, $\exists\tau_0>0$ s.t. $\mu_\mathcal{L}(V^{-1}((-\infty,\tau_0]))<\infty$ and $\|V_{-}\|_{N/(ps)}<S_{D^{s,p}}$, where $\mu_\mathcal{L}$ denotes the Lebesgue measure.
\item[(W)] $W\in L^{\frac{p_s^*}{p_s^*-q_1}}_{+}(\mathbb{R}^N)\cap L^{\frac{p_s^*}{p_s^*-q_2}}_{+}(\mathbb{R}^N)$.
\end{enumerate}

As for (V), let us note that $N/(ps)=(p_s^*/p)'$, where ${}'$ denotes the H\"{o}lder conjugate exponent. Therefore, the mapping $\displaystyle u\mapsto \int_{\mathbb{R}^N}V_{-}|u|^p dx$ is continuous in $W^{s,p}(\mathbb{R}^N)$. On the other hand, the mapping $\displaystyle u\mapsto \int_{\mathbb{R}^N}V_{+}|u|^p dx$ is not necessarily continuous in $W^{s,p}(\mathbb{R}^N)$. We introduce a norm
\[
\|u\|_{s,p,V_{+}}\coloneqq \left(\|u\|_{D^{s,p}}^p+ \int_{\mathbb{R}^N}V_{+}|u|^p dx\right)^{1/p}
\]
and a uniformly convex Banach space
\[
E\coloneqq \{u\in W^{s,p}(\mathbb{R}^N)\mid\|u\|_{s,p,V_{+}}<\infty\}
\]
equipped with the norm $\|\cdot \|_{s,p,V_{+}}$, which is also denoted by $\|\cdot\|_E$ or $\|\cdot\|$ for simplicity below. We work with this function space and consider some continuous functionals in this space. The proof of the uniform convexity of $E$ is similar to that of \cite{UniformlyConvex}, so we omit it. In particular, $E$ is reflexive. \par
The action functional associated with the equation \eqref{cho} is
\begin{align*}
I[u] &=\frac{1}{p}\|u\|_{D^{s,p}}^p+ \frac{1}{p}\int_{\mathbb{R}^N}V(x)|u|^p dx \\
&\quad\quad -\frac{1}{2\cdot p_{r;s}^{\uparrow *}} \int_{\mathbb{R}^N}(K\ast g(u))g(u)dx-\varepsilon_W\int_{\mathbb{R}^N}W(x)f(u)dx.
\end{align*}
The statement of our main theorem is as follows.
\begin{theorem}\label{MainTheorem}
Under the conditions (G1), (G2), (V), (K), (F) and (W), for each $\varepsilon_W>0$ small enough, there exists a sequence $\{u_n\}$ of the solutions for \eqref{cho} with $I[u_n]<0$ ($\forall n$) and $I[u_n]\to 0$ ($n\to\infty$).
\end{theorem}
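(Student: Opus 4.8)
The plan is to apply a version of the symmetric mountain pass lemma (the ``fountain-type'' result yielding critical values tending to zero, e.g. the Kajikiya theorem) to the even functional $I$ on the Banach space $E$. The argument splits into three blocks: (i) verifying the geometric/structural hypotheses of the symmetric mountain pass lemma for $I$; (ii) establishing the $(PS)_c$ condition for $c<0$ small; and (iii) harvesting the conclusion. For (i), I would first record that $I\in C^1(E,\mathbb{R})$ and $I[-u]=I[u]$, which follows from (G1), (F) and the evenness of $g,f$ together with the fact that the convolution term is quadratic in $g(u)$. The key quantitative estimate is a lower bound: using the Hardy--Littlewood--Sobolev inequality for the weak-$L^r$ kernel $K$ (Lemma on $S_K$ and the embedding $D^{s,p}\hookrightarrow L^{p_s^*}$), together with (G1) controlling $h=g-|t|^{p_{r;s}^{\uparrow*}}$ by powers strictly between $p$ and $p_{r;s}^{\uparrow*}$, one gets
\[
\int_{\mathbb{R}^N}(K\ast g(u))g(u)\,dx\le C\bigl(\|u\|^{2\hat p_g}+\|u\|^{2p_g}+\|u\|^{2p_{r;s}^{\uparrow*}}\bigr),
\]
all exponents exceeding $p$, while (F) and (W) give $\int W f(u)\le C(\|u\|^{q_1}+\|u\|^{q_2})$ with $q_i<p$. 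Also $\|V_-\|_{N/(ps)}<S_{D^{s,p}}$ guarantees that the quadratic part $\frac1p\|u\|_{D^{s,p}}^p+\frac1p\int V|u|^p$ is equivalent to $\frac1p\|u\|_E^p$ up to a coercive remainder; combining these on a suitable sequence of finite-dimensional subspaces $E_k$ (from a Schauder-type basis of the reflexive space $E$) one shows: on small spheres in $E_k$, $I\le 0$, and $\sup_{\|u\|\le\rho_k} I\to0$ as the radius shrinks, which is exactly the geometry needed for the sequence of negative critical values converging to $0$. The role of $\varepsilon_W>0$ small is to make the $-\varepsilon_W\int W f(u)$ term dominate near the origin so that $\inf_{\|u\|=\rho}I<0$ on each $E_k$ while keeping $I$ bounded below on bounded sets.

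For (ii) — the $(PS)_c$ condition for $c<0$ — I would take a Palais--Smale sequence $\{u_n\}$, $I[u_n]\to c$, $I'[u_n]\to0$. Boundedness of $\{u_n\}$ in $E$ follows from the Ambrosetti--Rabinowitz-type inequalities (G2) (with $\alpha_g>p_{r;s}^{\downarrow*}$, and crucially $\alpha_g$ may equal $p_{r;s}^{\uparrow*}$) and (F) (with $\alpha_f>1$), by the standard computation $I[u_n]-\frac1\theta\langle I'[u_n],u_n\rangle\ge (\text{positive})\|u_n\|^p - (\text{lower order})$ for an appropriate $\theta$. Passing to a weak limit $u$ in the reflexive space $E$, the condition (V) — finiteness of $\mu_\mathcal{L}(V^{-1}((-\infty,\tau_0]))$ — yields compactness of the embedding $E\hookrightarrow L^p(\mathbb{R}^N)$ and, interpolating with the Sobolev embedding, compactness into $L^q$ for $q\in[p,p_s^*)$; this handles the subcritical terms (the $W f$ term and the $h$-part of $g$). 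The genuinely critical piece $\int(K\ast|u|^{p_{r;s}^{\uparrow*}})|u|^{p_{r;s}^{\uparrow*}}$ is where the nonlocal concentration compactness lemma (stated earlier) enters: it decomposes the possible loss of mass into atoms with a concentration-function inequality tying the Dirac masses of $|u_n-u|^{p_s^*}$ to those of $(K\ast|u_n-u|^{p_{r;s}^{\uparrow*}})|u_n-u|^{p_{r;s}^{\uparrow*}}$ via $S_K$; the negativity of the level $c<0$, together with the threshold quantization (each nontrivial atom costs at least a fixed positive amount of energy determined by $S_K$), forces all atoms to vanish, so $u_n\to u$ strongly in the critical norm and hence in $E$.

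For (iii), with $I$ even, $I\in C^1$, bounded below on bounded sets with $I[0]=0$, satisfying $(PS)_c$ for all $c$ in a left-neighborhood of $0$, and with the finite-dimensional negativity from (i), the symmetric mountain pass lemma produces a sequence of critical points $u_n$ with $I[u_n]<0$ and $I[u_n]\nearrow0$; each $u_n$ is a weak solution of \eqref{cho}, giving the theorem. I expect the main obstacle to be step (ii), specifically the bookkeeping in the nonlocal concentration compactness argument: one must simultaneously track the three measures $|D^{s,p}$-energy density$|$, $|u_n|^{p_s^*}$, and $(K\ast|u_n|^{p_{r;s}^{\uparrow*}})|u_n|^{p_{r;s}^{\uparrow*}}$ (the last being nonlocal, so ``concentration at infinity'' and interaction between distant atoms need care because $K\notin L^\infty$ near $0$), verify no mass escapes to infinity (using (V) again and the behavior of $K$ away from the origin from (K)), and pin down the exact energy threshold below which compactness holds — then check that $c<0$ lies strictly below it, which is precisely what the $\varepsilon_W$-small, negative-level construction buys us. A secondary technical point is confirming that the subcritical convolution cross-terms (products of $|u|^{p_{r;s}^{\uparrow*}}$ with the lower-order parts of $g$) also pass to the limit; these are handled by the compact embeddings from (V) plus Hardy--Littlewood--Sobolev, but require splitting $K=K\mathbf{1}_{B_\varepsilon}+K\mathbf{1}_{B_\varepsilon^c}$ and using $K\mathbf{1}_{B_\varepsilon^c}\in L^\infty$.
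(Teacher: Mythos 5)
Your overall strategy coincides with the paper's: an even--functional, genus-based symmetric mountain pass lemma producing negative critical values tending to zero, with the $(PS)_c$ condition at negative levels secured by the nonlocal concentration compactness lemma and the smallness of $\varepsilon_W$. However, two steps as you have written them would fail.

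First, the version of the symmetric mountain pass lemma you invoke (and the one the paper uses, Proposition \ref{symmetric mountain pass}) requires the functional to be bounded from below on all of $E$, not merely on bounded sets. The functional $I$ is \emph{not} bounded from below, since the convolution term is of order $\|u\|^{2p_{r;s}^{\uparrow*}}$ with $2p_{r;s}^{\uparrow*}>p$. The paper resolves this by introducing the truncated functional $\tilde I[u]=I_{+}[u]-\psi(\|u\|_E^p)I_{-}[u]$, which is globally bounded below, coincides with $I$ on the set $\{\tilde I<0\}$ (because $\tilde I[u]<0$ forces $\|u\|_E<t_{\varepsilon_W,0}$), and inherits the $(PS)_c$ condition for $c<0$; the critical points of $\tilde I$ at the negative levels $c_n$ are then critical points of $I$. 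Without this truncation (or an equivalent device) your step (iii) does not close.

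Second, your claim that condition (V) ``yields compactness of the embedding $E\hookrightarrow L^p(\mathbb{R}^N)$'' is not justified: (V) only assumes $\mu_{\mathcal L}(V^{-1}((-\infty,\tau_0]))<\infty$ for \emph{one} level $\tau_0$, which is strictly weaker than the Bartsch--Wang condition ($\mu_{\mathcal L}(\{V\le M\})<\infty$ for every $M$) that underlies that compactness. A weakly null sequence in $E$ can retain $L^p$ mass on the set $\{V_{+}>\tau_0\}$. The paper instead uses only local Rellich--Kondrachov compactness, controls the mass escaping to infinity through the quantity $V_\infty\ge\tau_0\lim_R\limsup_n\int|u_n|^p\eta_R\,dx$ inside the concentration-compactness bookkeeping (equation \eqref{inftyquantity} and the estimate \eqref{nazonoquantity}), and passes to the limit in the $W f'(u_n)u_n$ term via equi-integrability coming from (W) and the Vitali convergence theorem. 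You would need to replace your compact-embedding step by such an argument (or strengthen (V)) for the subcritical terms and the no-vanishing-at-infinity step to go through.
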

In recent years, the study of Choquard type equations and fractional versions of elliptic equations has been attracting a lot of attention. The nonlinear Choquard equation 
\[
-\Delta u+V(x)u=(I_\alpha \ast |u|^q)|u|^{q-2}u\quad\text{in }\mathbb{R}^N
\]
arose in the description of the quantum theory of polaron at rest by Pekar \cite{Pekar} in 1954 and the modeling of an electron trapped in its own hole in the work by Choquard in 1976, which is also related to a Hartree approximation about one-component plasma in \cite{Hartree-Fock}. Regarding the recent results on the (non-fractional) Choquard equations, see \cite{Moroz}. On the other hand, in the context of fractional quantum mechanics, the nonlinear fractional Schr\"{o}dinger equation was first proposed by Laskin \cite{Laskin} as a result of expanding the classical Feynman path integral to the L\'{e}vy-like quantum mechanical paths. The stationary states of the corresponding fractional Schr\"{o}dinger-Newton equations satisfy the fractional Choquard equations. d'Avenia et al. \cite{classical fractional Choquard 1, AttainTalenti} studied the existence and some properties of the weak solutions for the fractional subcritical Choquard equation 
\[
(-\Delta)^{s} u+V(x)u=(I_\alpha \ast |u|^q)|u|^{q-2}u\quad\text{in }\mathbb{R}^N.
\]
As for the critical case, \cite{fractionalCC} studied the existence of high energy solutions for the fractional critical Choquard equation under the assumptions on the potential function $V$ introduced by Benci and Cerami. While most of the works on the fractional or non-fractional Choquard equations deal with the classically typical case where the convolution is the Riesz potential, several works, including \cite{6}, \cite{Bhattarai}, \cite{Qin} and so forth, deal with the convolutions with more general kernels in weak Lebesgue spaces. Besides, especially in recent years, $p$-fractional versions and Kirchhoff type problems have attracted a lot of attention. In \cite{Kirchhoff}, the authors consider the Schr\"{o}dinger-Choquard-Kirchhoff type equation 
\[
M(\|u\|_{D^{s,p}}^p)(-\Delta)_p^s u+V(x)|u|^{p-2}u=\lambda(I_\alpha\ast |u|^{p_{N/\alpha ;s}^{\uparrow *}}) |u|^{p_{N/\alpha ;s}^{\uparrow *}-2} u+\beta k(x)|u|^{q-2}u \quad\text{in }\mathbb{R}^N
\]
involving $p$-Laplace operator, where $M$ is a non-degenerate Kirchhoff function and $V\in C(\mathbb{R}^N)$ satisfies $\inf V>0$, and obtain infinitely many solutions by using the symmetric mountain pass lemma. Let us note that we can also extend Theorem \ref{MainTheorem} to Kirchhoff type equations using the method in \cite{Kirchhoff}. \par
Inspired by the above works, we consider the $p$-fractional Choquard type equation \eqref{cho} involving general convolution potential $K$. The main difficulties in solving our problem are the lack of compactness due to the translation in $\mathbb{R}^N$ and the dilation in $D^{s,p}$ and the double nonlocal nature caused by the fractional $p$-Laplacian and the convolution term. By extending the concentration compactness principle to a $p$-fractional version involving some nonlocal quantities, we overcome these difficulties. In this way, the $(PS)_c$ condition for the energy functional corresponding to the equation \eqref{cho} is shown for $c<0$. Furthermore, by utilizing the symmetric pass lemma based on the concept of Krasnoselskii's genus, we derive the existence of infinitely many critical points. Note that there are two types of the symmetric mountain pass lemma: one type assures the existence of a sequence of critical values diverging to infinity while another provides a sequence of critical values converging to zero. We deal with the latter case focusing on the effect of the term in the functional of the order determined by exponents smaller than $p$. This can be regarded as a generalization of the sublinear nonlinearity of the equation in the case $p=2$.

\section{Preliminaries}
It is easy to check that $I$ is well-defined in $E$ and that we have
\begin{align*}
I'[u]v&=\int_{\mathbb{R}^N} \int_{\mathbb{R}^N} \frac{|u(x)-u(y)|^{p-2}(u(x)-u(y))(v(x)-v(y))}{|x-y|^{N+ps}}dxdy \\
&\quad\quad+ \int_{\mathbb{R}^N}V(x)|u|^{p-2}u v dx-\frac{1}{p_{r;s}^{\uparrow *}} \int_{\mathbb{R}^N}(K\ast g(u))g'(u)v dx -\varepsilon_W\int_{\mathbb{R}^N}W(x)f'(u)v dx.
\end{align*}

To simplify the notation, let us define
\[
J[u]\coloneqq \displaystyle \frac{1}{2\cdot p_{r;s}^{\uparrow *}} \int_{\mathbb{R}^N}(K\ast g(u))g(u)dx,
\]
which is the convolution term of $I[u]$. \par
We describe the concept of weak solutions under the above setting.
\begin{definition}
We say that $u \in E$ is a (weak) solution to the equation \eqref{cho} if $u$ is a critical point of $I$. 
\end{definition}

Using the counterpart of the Young's convolution inequality for weak Lebesgue spaces (see Theorem 1.2.13 in \cite{Classical Fourier Analysis}) and usual H\"{o}lder's inequality, we can easily obtain the following key fact taking the place of the Hardy-Littlewood-Sobolev inequality for $K=I_\alpha$ in the case of general $K\in L^{r,\infty}(\mathbb{R}^N)$. 
\begin{proposition}
Let $p_1,p_2,p_3\in (1,\infty)$ with $1/p_1+1/p_2+1/p_3=2$. There exists a constant $C>0$ such that for all $f\in L^{p_1,\infty}(\mathbb{R}^N)$, $g\in L^{p_2} (\mathbb{R}^N)$, $h\in L^{p_3} (\mathbb{R}^N)$, we have
\[
\|(f\ast g)h\|_1\leq \|f\|_{p_1,\infty}\|g\|_{p_2}\|h\|_{p_3}.
\]
\end{proposition}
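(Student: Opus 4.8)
The plan is to deduce the inequality from two classical tools — H\"older's inequality and the Young convolution inequality for weak Lebesgue spaces (Theorem 1.2.13 in \cite{Classical Fourier Analysis}) — the whole argument being essentially bookkeeping of exponents. First I would apply H\"older's inequality with the conjugate pair $(p_3',p_3)$, where $p_3'=p_3/(p_3-1)$, to split
\[
\|(f\ast g)h\|_1\le \|f\ast g\|_{p_3'}\,\|h\|_{p_3}.
\]
Since $p_3\in(1,\infty)$ we have $p_3'\in(1,\infty)$ as well, so it remains to control $f\ast g$ in a genuine (strong) Lebesgue space whose exponent lies strictly between $1$ and $\infty$.

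Next I would invoke the weak-type Young inequality: if $f\in L^{p_1,\infty}(\mathbb{R}^N)$, $g\in L^{p_2}(\mathbb{R}^N)$ with $p_1,p_2\in(1,\infty)$ and the exponents satisfy $1+\tfrac1q=\tfrac1{p_1}+\tfrac1{p_2}$ with $q\in(1,\infty)$, then $f\ast g\in L^{q}(\mathbb{R}^N)$ and $\|f\ast g\|_{q}\le C\|f\|_{p_1,\infty}\|g\|_{p_2}$ for a constant $C=C(N,p_1,p_2)$; the fact that the target is the strong space $L^{q}$ rather than merely $L^{q,\infty}$ uses $p_2>1$. It then suffices to check that the admissible value of $q$ is exactly $p_3'$: from the hypothesis $\tfrac1{p_1}+\tfrac1{p_2}+\tfrac1{p_3}=2$ we get
\[
\frac1{p_1}+\frac1{p_2}=2-\frac1{p_3}=1+\Bigl(1-\frac1{p_3}\Bigr)=1+\frac1{p_3'},
\]
so indeed $q=p_3'\in(1,\infty)$, and the hypotheses of the cited theorem are met.

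Combining the two estimates yields $\|(f\ast g)h\|_1\le C\|f\|_{p_1,\infty}\|g\|_{p_2}\|h\|_{p_3}$, which is the assertion. I do not expect any real obstacle: the only points needing a word of care are that all three given exponents together with the auxiliary exponent $p_3'$ lie strictly in $(1,\infty)$ (guaranteed by $p_1,p_2,p_3\in(1,\infty)$), so that the weak Young inequality applies in its strong-target form, and the elementary verification of the exponent identity above.
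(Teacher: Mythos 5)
Your argument is correct and is exactly the paper's intended route: the paper derives this proposition in one line from H\"older's inequality together with the weak-type Young convolution inequality (Theorem 1.2.13 in Grafakos), which is precisely your decomposition $\|(f\ast g)h\|_1\le\|f\ast g\|_{p_3'}\|h\|_{p_3}$ followed by $\|f\ast g\|_{p_3'}\le C\|f\|_{p_1,\infty}\|g\|_{p_2}$ with the exponent check $1/p_1+1/p_2=1+1/p_3'$. The only cosmetic discrepancy is that the paper's displayed inequality omits the constant $C$ announced in its own hypothesis, whereas your version correctly retains it.
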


The following lemma is a $p$-fractional version of that in \cite{Vminus}, which is for Schr\"{o}dinger equations. 
\begin{lemma} 
There exist constants $C_1,C_2>0$ such that, for any $u\in W^{s,p}(\mathbb{R}^N)$ such that $\|u\|_{s,p,V_{+}}<\infty$, we have
\[
C_1\|u\|_{W^{s,p}}^p\leq C_2\|u\|_{s,p,V_{+}}^p\leq \|u\|_{D^{s,p}}^p+ \int_{\mathbb{R}^N}V|u|^p dx\leq \|u\|_{s,p,V_{+}}^p.
\]
\end{lemma}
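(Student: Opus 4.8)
The plan is to prove the three inequalities from right to left, the rightmost being essentially trivial and the leftmost being where the hypothesis (V) on $V_-$ does the real work. First I would establish the rightmost inequality: since $V = V_+ - V_-$ with $V_-\geq 0$, we have $\|u\|_{D^{s,p}}^p + \int_{\mathbb{R}^N} V|u|^p\,dx = \|u\|_{D^{s,p}}^p + \int_{\mathbb{R}^N} V_+|u|^p\,dx - \int_{\mathbb{R}^N} V_-|u|^p\,dx \leq \|u\|_{s,p,V_+}^p$, so the constant there can be taken to be $1$.

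For the leftmost inequality I would use condition (V), namely $\|V_-\|_{N/(ps)} < S_{D^{s,p}}$. Writing $N/(ps) = (p_s^*/p)'$ as the excerpt points out, H\"older's inequality with exponents $(p_s^*/p)'$ and $p_s^*/p$ gives
\[
\int_{\mathbb{R}^N} V_-|u|^p\,dx \leq \|V_-\|_{N/(ps)}\,\|u\|_{p_s^*}^p \leq \frac{\|V_-\|_{N/(ps)}}{S_{D^{s,p}}}\,\|u\|_{D^{s,p}}^p,
\]
where the last step uses the definition of $S_{D^{s,p}}$ as the best Sobolev constant for $D^{s,p}(\mathbb{R}^N)\hookrightarrow L^{p_s^*}(\mathbb{R}^N)$. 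Setting $\theta \coloneqq \|V_-\|_{N/(ps)}/S_{D^{s,p}} \in [0,1)$, we obtain
\[
\|u\|_{D^{s,p}}^p + \int_{\mathbb{R}^N} V|u|^p\,dx \geq \|u\|_{D^{s,p}}^p + \int_{\mathbb{R}^N} V_+|u|^p\,dx - \theta\|u\|_{D^{s,p}}^p \geq (1-\theta)\,\|u\|_{s,p,V_+}^p,
\]
using $0 \leq 1-\theta \leq 1$ to bound the $V_+$-term below by $(1-\theta)\int V_+|u|^p\,dx$. This gives the middle inequality with $C_2 = 1-\theta > 0$.

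For the leftmost inequality, I need to control $\|u\|_{W^{s,p}}^p = \|u\|_{D^{s,p}}^p + \|u\|_p^p$ by $\|u\|_{s,p,V_+}^p$. The Gagliardo seminorm part is immediate since $\|u\|_{D^{s,p}}^p \leq \|u\|_{s,p,V_+}^p$. The term $\|u\|_p^p$ is the delicate one: this is exactly where the coercivity-at-infinity hypothesis $\mu_{\mathcal{L}}(V^{-1}((-\infty,\tau_0])) < \infty$ enters. Splitting $\mathbb{R}^N = A \cup A^c$ with $A \coloneqq V^{-1}((-\infty,\tau_0])$ of finite measure, on $A^c$ we have $V \geq \tau_0$, hence $\tau_0\int_{A^c}|u|^p\,dx \leq \int_{A^c} V|u|^p\,dx \leq \int_{\mathbb{R}^N}(\|u\|_{D^{s,p}}^p + V|u|^p)\,dx$ — more precisely I would bound $\int_{A^c} V|u|^p \leq \|u\|_{D^{s,p}}^p + \int_{\mathbb{R}^N} V|u|^p\,dx + \int_A V_-|u|^p\,dx$ and absorb the last term. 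On $A$, by H\"older with exponents $p_s^*/p$ and $(p_s^*/p)'$, $\int_A |u|^p\,dx \leq \mu_{\mathcal{L}}(A)^{1/(p_s^*/p)'}\|u\|_{p_s^*}^p \leq C\,\mu_{\mathcal{L}}(A)^{ps/N}\|u\|_{D^{s,p}}^p$. Combining the two pieces and using the already-proven middle inequality $\|u\|_{D^{s,p}}^p + \int V|u|^p\,dx \geq C_2\|u\|_{s,p,V_+}^p$, I get $\|u\|_p^p \leq C\|u\|_{s,p,V_+}^p$, hence $\|u\|_{W^{s,p}}^p \leq C\|u\|_{s,p,V_+}^p$, which after rearranging yields $C_1\|u\|_{W^{s,p}}^p \leq C_2\|u\|_{s,p,V_+}^p$. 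The main obstacle is getting the constants in the chain consistent — in particular making sure the $L^p$-on-$A$ estimate and the $L^p$-on-$A^c$ estimate are each controlled by the \emph{same} quantity $\|u\|_{D^{s,p}}^p + \int V|u|^p\,dx$ up to absorbable error terms, rather than circularly by $\|u\|_{s,p,V_+}^p$; the finite-measure condition on $A$ together with the Sobolev embedding is precisely what breaks this potential circularity.
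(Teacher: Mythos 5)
Your proof is correct, and for the two easy inequalities (the trivial rightmost one, and the middle one via H\"older in $L^{N/(ps)}\times L^{p_s^*/p}$ plus the Sobolev constant $S_{D^{s,p}}$) it coincides with the paper's argument. Where you genuinely diverge is in controlling $\|u\|_p^p$ for the leftmost inequality. The paper shifts the potential: it shows $(V-\tau_0)_-\in L^{N/(ps)}$ using the finite measure of $V^{-1}((-\infty,\tau_0])$, invokes dominated convergence to find a small $t_0>0$ with $\|(V-t_0)_-\|_{N/(ps)}<S_{D^{s,p}}-\varepsilon_0$, and then reads off $t_0\|u\|_p^p$ from the decomposition $V=(V-t_0)_++t_0-(V-t_0)_-$. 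You instead split the domain at the sublevel set $A=V^{-1}((-\infty,\tau_0])$: on $A^c$ the bound $V>\tau_0$ gives $\tau_0\int_{A^c}|u|^p\,dx$ directly, and on the finite-measure set $A$ you use H\"older with exponent $p_s^*/p$ and the Sobolev embedding to get $\int_A|u|^p\,dx\leq \mu_{\mathcal L}(A)^{ps/N}S_{D^{s,p}}^{-1}\|u\|_{D^{s,p}}^p$. Both routes use exactly the same three hypotheses from (V), but yours avoids the limiting/dominated-convergence step and keeps the original constant $\theta=\|V_-\|_{N/(ps)}/S_{D^{s,p}}<1$ throughout, which makes the constants fully explicit; the paper's potential-shift trick is slightly slicker in that it produces the $L^p$ term in a single line once $t_0$ is chosen. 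Your closing remark about avoiding circularity is exactly the right point: the finite measure of $A$ plus Sobolev is what lets the $L^p$ norm be controlled by quantities already bounded by $\|u\|_{s,p,V_+}^p$.
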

\begin{proof}
From the condition (V), there exists $\tau_0>0$ such that $\mu_\mathcal{L}(V^{-1}((-\infty,\tau_0]))<\infty$ where $\mu_\mathcal{L}$ denotes the Lebesgue measure. Since 
\[
\|(V-\tau_0)_{-}\|_{N/(ps)}^{N/(ps)}\leq \|V_{-}\|_{N/(ps)}^{N/(ps)}+\tau_0 \mu_\mathcal{L}(V^{-1}((-\infty,\tau_0]))<\infty, 
\]
we have $(V-\tau_0)_{-}\in L^{N/(ps)}$. Therefore, by the Lebesgue convergence theorem with the dominating function $(V-\tau_0)_{-}^{N/(ps)}$, we get $\|(V-t)_{-}\|_{N/(ps)}\to \|V_{-}\|_{N/(ps)}^{N/(ps)}<S_{D^{s,p}}$ ($t\to 0$). It follows that there exist $\varepsilon_0>0$ and $t_0\in (0,\tau_0)$ such that 
\[
\|(V-t_0)_{-}\|_{N/(ps)}<S_{D^{s,p}}-\varepsilon_0.
\]

Since $(p_s^{*}/p)'=N/(ps)$, for $t>0$, by H\"{o}lder's inequality,
\begin{align*}
\int_{\mathbb{R}^N}V|u|^p dx &= \int_{\mathbb{R}^N}((V-t)_{+}+t)|u|^p dx-\int_{\mathbb{R}^N}(V-t)_{-}|u|^p dx\\
&\geq \int_{\mathbb{R}^N}((V-t)_{+}+t)|u|^p dx-\|(V-t)_{-}\|_{N/(ps)}\|u\|_{p_s^{*}}^p \\
&\geq \int_{\mathbb{R}^N}((V-t)_{+}+t)|u|^p dx-S^{-1}_{D^{s,p}}\|(V-t)_{-}\|_{N/(ps)}\|u\|_{D^{s,p}}^p.
\end{align*}
Therefore, for $t_0>0$ sufficiently small, 
\begin{align*}
\|u\|_{D^{s,p}}^p+ \int_{\mathbb{R}^N}V|u|^p dx &\geq \|u\|_{D^{s,p}}^p+ \int_{\mathbb{R}^N}((V-t_0)_{+}+t_0)|u|^p dx \\
&\quad\quad -S^{-1}_{D^{s,p}}\|(V-t_0)_{-}\|_{N/(ps)}\|u\|_{D^{s,p}}^p \\
&\geq t_0\|u\|_{p}^p+(1-S_{D^{s,p}}^{-1}\|(V-t_0)_{-}\|_{N/(ps)}) \|u\|_{D^{s,p}}^p \\
&\geq t_0\|u\|_{p}^p+ S_{D^{s,p}}^{-1} \varepsilon_0 \|u\|_{D^{s,p}}^p \\
&\geq C\|u\|_{W^{s,p}}^p.
\end{align*}
Replacing $V$ with $V_{+}$, we obtain $\|u\|_{W^{s,p}}\leq C\|u\|_{s,p,V_{+}}$. 
\begin{align*}
\|u\|_{D^{s,p}}^p+ \int_{\mathbb{R}^N}V|u|^p dx &=\|u\|_{s,p,V_{+}}^p-\int_{\mathbb{R}^N}V_{-}|u|^p dx \\
&\geq \|u\|_{s,p,V_{+}}^p-\|V_{-}\|_{N/(ps)} \|u\|_{p_s^{*}}^p \\
&\geq \|u\|_{s,p,V_{+}}^p-S_{D^{s,p}}^{-1}\|V_{-}\|_{N/(ps)} \|u\|_{D^{s,p}}^p \\
&\geq (1-S_{D^{s,p}}^{-1}\|V_{-}\|_{N/(ps)}) \|u\|_{s,p,V_{+}}^p.
\end{align*}
\end{proof}

\begin{lemma}
If $\{u_n\}$ is a $(PS)_c$ sequence for $I$, then $\{u_n\}$ is bounded. 
\end{lemma}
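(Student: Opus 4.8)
The plan is to use the standard Ambrosetti--Rabinowitz-type argument, exploiting the superquadratic conditions (G2) on $g$ and the subcritical bound (F) on $f$, with the subtlety that the convolution term $J$ is superlinear of order $\alpha_g > p_{r;s}^{\downarrow *}$ while $f$ contributes a term of order at most $q_2 < p$. Let $\{u_n\}$ be a $(PS)_c$ sequence, so $I[u_n]\to c$ and $I'[u_n]\to 0$ in $E^*$; in particular $I'[u_n]u_n = o(\|u_n\|)$. First I would compute the combination $I[u_n] - \frac{1}{\alpha_g} I'[u_n]u_n$. The fractional $p$-Laplacian and $V_+$ terms yield $\bigl(\frac{1}{p} - \frac{1}{\alpha_g}\bigr)\|u_n\|_{s,p,V_+}^p$ up to the $V_-$ contribution, which is controlled by Lemma~2.3 (the $p$-fractional Vminus lemma): the left two inequalities there give $C_1\|u_n\|_{W^{s,p}}^p \le \|u_n\|_{D^{s,p}}^p + \int V|u_n|^p\,dx$, so the quadratic-type part is coercive in $\|u_n\|_{s,p,V_+}$ provided $\alpha_g > p$, which holds since $\alpha_g > p_{r;s}^{\downarrow *}$ and actually we need $\alpha_g>p$; here I note $p < \hat p_g \le p_g < p_{r;s}^{\uparrow *}$ and $\alpha_g \in (p_{r;s}^{\downarrow*}, p_{r;s}^{\uparrow*}]$, and since by (G1)--(G2) one can take $\alpha_g$ close to $\hat p_g > p$, the strict inequality $\alpha_g > p$ is available.

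Next, the convolution part of $I[u_n] - \frac{1}{\alpha_g}I'[u_n]u_n$ is, writing $G(u)=\int(K\ast g(u))g(u)\,dx$ and using $2J[u] = \frac{1}{p_{r;s}^{\uparrow*}}G(u)$, a multiple of $\int (K\ast g(u_n))\bigl(\alpha_g g(u_n) - u_n g'(u_n)\bigr)\,dx$. Splitting $g = h + |\cdot|^{p_{r;s}^{\uparrow*}}$ as in (G1), the pure critical power contributes a term proportional to $(\alpha_g - p_{r;s}^{\uparrow*})\int(K\ast|u_n|^{p_{r;s}^{\uparrow*}})|u_n|^{p_{r;s}^{\uparrow*}}\,dx \le 0$ since $\alpha_g \le p_{r;s}^{\uparrow*}$ and $K\ge 0$, while the $h$-part contributes a nonpositive quantity because (G2) gives $\alpha_g h(u) - u h'(u) \le 0$ and again $K\ast g(u_n)\ge 0$ (here $g\ge 0$, $K\ge 0$); hence the entire convolution contribution has a favorable sign and may be dropped. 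For the $f$-term I would use (F): $f(u) \le \frac{1}{\alpha_f} u f'(u) \le \frac{C}{\alpha_f}(|u|^{q_1}+|u|^{q_2})$, so $\bigl|\varepsilon_W\int W f(u_n)\,dx\bigr| \le \varepsilon_W C\bigl(\|W\|_{p_s^*/(p_s^*-q_1)}\|u_n\|_{p_s^*}^{q_1} + \|W\|_{p_s^*/(p_s^*-q_2)}\|u_n\|_{p_s^*}^{q_2}\bigr)$ by H\"older and the Sobolev embedding, and similarly $\bigl|\frac{1}{\alpha_g}\varepsilon_W\int W f'(u_n)u_n\,dx\bigr|$ is bounded by a constant times $\|u_n\|^{q_1} + \|u_n\|^{q_2}$.

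Collecting terms, I obtain an inequality of the shape
\[
c + 1 + \|u_n\| \;\ge\; I[u_n] - \tfrac{1}{\alpha_g}I'[u_n]u_n \;\ge\; C_3\|u_n\|^p - C_4\bigl(\|u_n\|^{q_1} + \|u_n\|^{q_2}\bigr)
\]
for $n$ large, with $C_3 > 0$ and $1 < q_1 \le q_2 < p$. Since the exponents $q_1, q_2$ and $1$ (the $\|u_n\|$ from the $(PS)$ estimate) are all strictly less than $p$, this forces $\{\|u_n\|\}$ to be bounded. The main obstacle is bookkeeping the sign of the convolution term: one must verify carefully that the decomposition $g = h + |\cdot|^{p_{r;s}^{\uparrow*}}$ together with (G2) and the nonnegativity of both $K$ and $g$ makes $\int(K\ast g(u_n))(\alpha_g g(u_n) - u_n g'(u_n))\,dx \le 0$ — this is where the structural hypotheses (G1), (G2), (K) are all used at once, and it is the only place where the nonlocal (double convolution) nature of the problem genuinely enters the boundedness argument. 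Everything else is the routine Ambrosetti--Rabinowitz coercivity estimate adapted to the $W^{s,p}$ setting via Lemma~2.3.
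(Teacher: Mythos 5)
Your overall strategy (an Ambrosetti--Rabinowitz multiplier argument on $I[u_n]-\beta I'[u_n]u_n$, dropping the signed convolution term and controlling the $f$-term by $\|u_n\|^{q_1}+\|u_n\|^{q_2}$) is the right one and matches the paper's, but your specific choice $\beta=1/\alpha_g$ creates a genuine gap. Coercivity of the principal part then requires $\tfrac1p-\tfrac1{\alpha_g}>0$, i.e.\ $\alpha_g>p$, and the hypotheses do \emph{not} provide this: (G2) only guarantees $\alpha_g>p_{r;s}^{\downarrow *}=p\bigl(1-\tfrac{1}{2r}\bigr)$, which is strictly \emph{less} than $p$ (the paper even flags this by writing ``$(p_{r;s}^{\downarrow *}<)\,p<\hat p_g$'' in (G1)). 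Your attempted repair --- ``one can take $\alpha_g$ close to $\hat p_g>p$'' --- is not legitimate: $\alpha_g$ is whatever constant makes the inequality $\alpha_g h(u)\le uh'(u)$ hold, and \emph{enlarging} $\alpha_g$ strengthens that hypothesis; there are admissible $h$ (non-pure-power perturbations of $|u|^{\hat p_g}$) for which (G2) holds for some $\alpha_g\in(p_{r;s}^{\downarrow *},p)$ but for no $\alpha_g>p$. For such nonlinearities your principal term has the wrong sign and the argument collapses.

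The missing idea is the factor $2$ coming from the quadratic dependence of the convolution functional on $g(u)$: since $J[u]=\tfrac{1}{2p_{r;s}^{\uparrow *}}\int(K\ast g(u))g(u)\,dx$, one has $J'[u]u=\tfrac{1}{p_{r;s}^{\uparrow *}}\int(K\ast g(u))g'(u)u\,dx$, so
\[
J[u]-\beta J'[u]u=\frac{1}{2p_{r;s}^{\uparrow *}}\int_{\mathbb{R}^N}(K\ast g(u))\bigl(g(u)-2\beta\,g'(u)u\bigr)\,dx ,
\]
not a multiple of $\int(K\ast g(u))(\alpha_g g(u)-g'(u)u)\,dx$ as you wrote. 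Consequently the sign condition on the convolution term is only $2\beta\ge 1/\alpha_g$, so one may choose any $\beta\in\bigl(\tfrac{1}{2\alpha_g},\tfrac1p\bigr)$; this interval is nonempty precisely because $\alpha_g>p_{r;s}^{\downarrow *}>p/2$, which is the real reason the lower bound $p_{r;s}^{\downarrow *}$ appears in (G2). With such a $\beta$ both the principal part (coefficient $\tfrac1p-\beta>0$) and the convolution part (coefficient $\tfrac{2\beta-1/\alpha_g}{2p_{r;s}^{\uparrow *}}\ge 0$, using $g(u)\le\tfrac1{\alpha_g}ug'(u)$, valid for the $h$-part by (G2) and for the $|u|^{p_{r;s}^{\uparrow *}}$-part since $\alpha_g\le p_{r;s}^{\uparrow *}$, together with $K,g\ge0$) have favorable signs, and the rest of your estimate goes through verbatim. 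So the fix is local --- replace $1/\alpha_g$ by a suitable $\beta$ --- but as written the proof does not cover the full range of $\alpha_g$ permitted by the hypotheses.
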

\begin{proof}
Since $I[u_n]\to c$ and $\|I'[u_n]\|_{E'}\to 0$, for sufficiently large $n$, we have
\begin{align*}
&\phantom{=}c+1+\beta \cdot \|u_n\| \\
&\geq I[u_n]-\beta I'[u_n]u_n \\
&=\left(\frac{1}{p}-\beta\right)\left(\|u_n\|_{D^{s,p}}^p+\int_{\mathbb{R}^N}V|u_n|^p dx\right) \\
&\quad\quad -\frac{1}{2\cdot p_{r;s}^{\uparrow *}} \int_{\mathbb{R}^N}(K\ast g(u_n))(g(u_n)-2\beta g'(u_n)u_n)dx -\varepsilon_W\int_{\mathbb{R}^N}W(x)(f(u_n)-\beta f'(u_n)u_n)dx\\
&\geq\left(\frac{1}{p}-\beta\right)\left(\|u_n\|_{D^{s,p}}^p+\int_{\mathbb{R}^N}V|u_n|^p dx\right) \\
&\quad\quad -\frac{1/\alpha_g-2\beta}{2\cdot p_{r;s}^{\uparrow *}} \int_{\mathbb{R}^N}(K\ast g(u_n))g'(u_n)u_n dx -\left(\frac{1}{\alpha_f}-\beta\right)\varepsilon_W\int_{\mathbb{R}^N}W(x)f'(u_n)u_n dx\\
&\geq\left(\frac{1}{p}-\beta\right)\left(\|u_n\|_{D^{s,p}}^p+\int_{\mathbb{R}^N}V|u_n|^p dx\right) \\
&\quad\quad +\frac{2\beta-1/\alpha_g}{2\cdot p_{r;s}^{\uparrow *}} \cdot p_{r;s}^{\uparrow *}\left(\|u_n\|_{D^{s,p}}^p+\int_{\mathbb{R}^N}V|u_n|^p dx 
%\right. \\ &\quad\quad \left.
-\varepsilon_W\int_{\mathbb{R}^N}W(x)f'(u_n)u_n dx +o(1)\right) \\ 
&\quad\quad-\left(\frac{1}{\alpha_f}-\beta\right)\varepsilon_W\int_{\mathbb{R}^N}W(x)f'(u_n)u_n dx\\
&\geq\left(\frac{1}{p}-\beta\right)\left(\|u_n\|_{D^{s,p}}^p+\int_{\mathbb{R}^N}V|u_n|^p dx\right) \\
&\quad\quad +\frac{2\beta-1/\alpha_g}{2\cdot p_{r;s}^{\uparrow *}} \cdot p_{r;s}^{\uparrow *}\left(\|u_n\|_{D^{s,p}}^p+\int_{\mathbb{R}^N}V|u_n|^p dx\right) -C_1\varepsilon_W\sum_{j=1,2}\|W\|_{\frac{p_s^*}{p_s^*-q_j}}\| |u_n|^{q_j}\|_{p_s^*/q_j}-C_2 \\
&\geq C_3\|u_n\|^p-C_4\varepsilon_W(\|u_n\|^{q_1}+ \|u_n\|^{q_2})-C_2
\end{align*}
for some $\beta\in ((2\alpha_g)^{-1},1/p)$ and positive constants $C_j>0$. Since $1<q_i<p$ ($i=1,2$), comparing the order of both sides, we deduce $\{\|u_n\|\}$ is bounded. 

\end{proof}
\begin{remark}\label{uniformboundforW}
$\displaystyle \sup_{\{u_n\}:(PS)_c\text{ sequence for $I$}}\sup_{n\in\mathbb{N}} \|u_n\|$ is bounded uniformly in $\varepsilon_W \in (0,\varepsilon)$ for any $\varepsilon>0$ because we can further estimate as follows:
\begin{align*}
&\phantom{=}C_3\|u_n\|^p-C_4\varepsilon_W(\|u_n\|^{q_1}+ \|u_n\|^{q_2})-C_2 \\
&\geq C_3\|u_n\|^p-C_4\varepsilon (\|u_n\|^{q_1}+ \|u_n\|^{q_2})-C_2.
\end{align*}
\end{remark}

\begin{proposition}
If $\{u_n\}$ is a $(PS)_c$ sequence for $I$ with $u_n \rightharpoonup u$, then $I'[u]=0$. 
\end{proposition}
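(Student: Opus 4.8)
The plan is to show that the weak limit $u$ of a $(PS)_c$ sequence $\{u_n\}$ is a critical point of $I$, i.e. $I'[u]v = 0$ for all $v$ in a dense subset of $E$, say $v \in C_c^\infty(\mathbb{R}^N)$. Since $\{u_n\}$ is bounded (by the previous lemma) and $E$ is reflexive with the compact local embedding $W^{s,p}(\mathbb{R}^N) \hookrightarrow\hookrightarrow L^q_{\mathrm{loc}}(\mathbb{R}^N)$ for $q < p_s^*$, after passing to a subsequence we may assume $u_n \rightharpoonup u$ in $E$, $u_n \to u$ strongly in $L^q_{\mathrm{loc}}$ for every $q \in [1, p_s^*)$, and $u_n \to u$ a.e. in $\mathbb{R}^N$. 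The goal is then to pass to the limit in each of the four terms of $I'[u_n]v \to 0$ with $v$ fixed and compactly supported.

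**First I would** handle the two "local" terms. For the fractional $p$-Laplacian term, the standard argument is that the map $w \mapsto \frac{|w(x)-w(y)|^{p-2}(w(x)-w(y))}{|x-y|^{(N+ps)/p'}}$ sends bounded sequences in $D^{s,p}$ to bounded sequences in $L^{p'}(\mathbb{R}^{2N})$, and a.e. convergence $u_n \to u$ together with boundedness gives weak convergence in $L^{p'}$ of this "gradient" quantity (a Brezis–Lieb / de Giorgi type lemma); pairing against the fixed $L^p$ kernel coming from $v \in C_c^\infty$ yields convergence. For the potential term $\int V |u_n|^{p-2}u_n v\,dx$, splitting $V = V_+ - V_-$: since $v$ has compact support and $V_+ \in L^\infty_{\mathrm{loc}}$, the $V_+$ part converges by the $L^p_{\mathrm{loc}}$ strong convergence, while for $V_- \in L^{N/(ps)}$ one uses that $|u_n|^{p-2}u_n$ is bounded in $L^{(p_s^*)'/(p-1)\cdot\ldots}$ — more precisely, $|u_n|^{p-1}v$ is bounded and converges a.e., so $V_-|u_n|^{p-2}u_n v \to V_-|u|^{p-2}u v$ in $L^1$ by Vitali's theorem using equi-integrability from the $L^{p_s^*/p}$ bound on $|u_n|^p$.

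**The hard part will be** the nonlocal convolution term $\int_{\mathbb{R}^N}(K \ast g(u_n)) g'(u_n) v \, dx$, because $g$ has critical growth $g(t) \sim |t|^{p_{r;s}^{\uparrow *}}$ and only $L^q_{\mathrm{loc}}$ strong convergence is available. The strategy is: write $g(u_n) = h(u_n) + |u_n|^{p_{r;s}^{\uparrow *}}$ via (G1); the subcritical part $h(u_n)$ is controlled by $|u_n|^{p_g-1} + |u_n|^{\hat p_g - 1}$ with exponents strictly below $p_{r;s}^{\uparrow *}$, so $g(u_n) \rightharpoonup g(u)$ in $L^{\ell_r}(\mathbb{R}^N) = L^{p_s^*/p_{r;s}^{\uparrow *}}(\mathbb{R}^N)$ (the critical piece converges weakly since $|u_n|^{p_{r;s}^{\uparrow *}}$ is bounded in $L^{\ell_r}$ and converges a.e.; the subcritical piece converges strongly in $L^{\ell_r}_{\mathrm{loc}}$ and is tight-free after a standard equi-integrability/Vitali argument). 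By the weak Young inequality (the Proposition above), $w \mapsto K \ast w$ is bounded $L^{\ell_r} \to L^{(\ell_r)'\cdot\ldots}$ into the appropriate space, and since $g'(u_n)v$ is supported in $\mathrm{supp}\,v$ and converges strongly there in the dual exponent (again using (G1) to bound $|g'(u_n)|$ by subcritical-plus-critical powers and the $L^q_{\mathrm{loc}}$ convergence, noting that on the compact support even the "critical" exponent $p_{r;s}^{\uparrow *} - 1 < p_s^* - 1$ integrated against the fixed test function is below criticality), one can pair the weak convergence of $K \ast g(u_n)$ against the strong local convergence of $g'(u_n) v$ to conclude $\int (K\ast g(u_n))g'(u_n)v \to \int (K\ast g(u))g'(u)v$. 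The term $\varepsilon_W \int W f'(u_n)v\,dx$ is the easiest: $|f'(t)t| \le C(|t|^{q_1}+|t|^{q_2})$ with $q_i < p$ gives $|f'(u_n)| \le C(|u_n|^{q_1-1}+|u_n|^{q_2-1})$, and combined with $W \in L^{p_s^*/(p_s^*-q_i)}$ and Hölder plus the $L^{q_i}_{\mathrm{loc}}$-strong (indeed global, by equi-integrability) convergence, Vitali's theorem gives the limit. **Assembling** all four limits into $I'[u_n]v \to I'[u]v$, and using $I'[u_n]v \to 0$ since $\|I'[u_n]\|_{E'} \to 0$ and $\|v\|_E$ is fixed, yields $I'[u]v = 0$ for all $v \in C_c^\infty(\mathbb{R}^N)$, hence for all $v \in E$ by density and continuity of $I'[u]$, so $I'[u] = 0$.
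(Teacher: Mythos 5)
Your proposal is correct and follows essentially the same route as the paper: term-by-term passage to the limit against a fixed test function $v\in C_c^\infty$, with the convolution term handled by pairing weak $L^{2r}$ convergence of $K\ast g(u_n)$ against strong $L^{(2r)'}$ convergence of $g'(u_n)v$ on $\operatorname{supp}v$ (using that $(p_{r;s}^{\uparrow *}-1)\ell_r<p_s^*$), followed by density. The only cosmetic difference is that you obtain the weak convergence of $K\ast g(u_n)$ from weak convergence of $g(u_n)$ in $L^{\ell_r}$ plus weak--weak continuity of the linear convolution operator, whereas the paper uses boundedness in $L^{2r}$ together with almost everywhere convergence; both are standard and interchangeable here.
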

\begin{proof}
By the fractional-order Rellich–Kondrachov theorem, up to a subsequence, we have $u_n\to u$ in $L^q_\mathrm{loc}(\mathbb{R}^N)$ for any $q\in (p,p_s^*)$ and $u_n\to u$ a.e. 
%先にNemytskiiが有界を有界に送り「有界概収束から弱収束」を言って「線形連続なら弱連続」を使うのでも良い。
By the continuity of Nemytskii operator: \[
L^p\cap L^{p_s^{*}}\to L^{p/p_{r;s}^{\downarrow *}}+ L^{p_s^{*}/p_{r;s}^{\uparrow *}} =L^{\frac{2r}{2r-1}};u\mapsto g(u)
\]
and the fact that the linear operator: $L^{\frac{2r}{2r-1}}\to L^{2r}; v\mapsto K\ast v$ is continuous due to the Young's inequality for weak Lebesgue space, we have $\{K\ast g(u_n)\}$ is bounded in $L^{2r}$. Since $2r>1$, the almost everywhere convergence and the boundedness in $L^{2r}$ implies weak convergence $K\ast g(u_n) \rightharpoonup K\ast g(u)$ in $L^{2r}$. Take any $\varphi\in C_c^\infty$.
%$u_n\to u$ in $L^{2+\varepsilon}(\operatorname{supp}\varphi)\cap L^{2_s^{*}-\varepsilon} (\operatorname{supp}\varphi)$. There exists $q\geq 1$ such that $\frac{1}{(2r)/(2r-1)}=\frac{1}{q}+\frac{1}{(2+\varepsilon)/((2r)/(2r-1))}$×
For $\varepsilon>0$ sufficiently small, since $u_n\to u$ in $L^{\frac{2r}{2r-1}\cdot (p_{r;s}^{\downarrow *}+\varepsilon)}_\mathrm{loc}\cap L^{\frac{2r}{2r-1}\cdot (p_{r;s}^{\uparrow *}-\varepsilon)}_\mathrm{loc}= L^{\frac{2r}{2r-1}\cdot (p_{r;s}^{\uparrow *}-\varepsilon)}_\mathrm{loc}$, by continuity of Nemytskii operator, we have 
$g'(u_n)\to g'(u)$ in $L^{\frac{2r}{2r-1}\cdot\frac{p_{r;s}^{\uparrow *}-\varepsilon}{p_{r;s}^{\uparrow *}-1}}(\operatorname{supp}\varphi)$ and thus $g'(u_n)\varphi\to g'(u)\varphi$ in $L^{\frac{2r}{2r-1}}=L^{(2r)'}$ by H\"{o}lder's inequality. Therefore, $(K\ast g(u_n))g'(u_n)\varphi\to (K\ast g(u))g'(u)\varphi$ in $L^1$. By the density argument, we obtain $J'[u_n] \rightharpoonup J'[u]$. 
\par
As for the linear part of $I'$, the weak-weak continuity follows from the linearity and the norm continuity. As for the local nonlinear part of $I'$, by H\"{o}lder's inequality, noting that
\[
\frac{1}{\frac{p_s^*}{p_s^*-q_i}}+ \frac{1}{\frac{p_s^*}{q_i-1}}+\frac{1}{p_s^*}=1
\]
and using the characterization of the weak convergence and a similar density argument as above, we can verify the weak-weak continuity.
\end{proof}

The next splitting lemma is a simple generalization of Lemma 2.2 in \cite{Split}. 
\begin{lemma}\label{ConvolutionSplit}
Suppose $\{u_n\}$ is bounded in $L^{p_s^*}(\mathbb{R}^N)$ and $u_n\to u$ a.e. in $\mathbb{R}^N$. Then, $J[u_n]-J[u_n-u]\to J[u]$.
\end{lemma}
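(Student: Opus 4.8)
The plan is to mimic the classical Brezis--Lieb type argument, but adapted to the \emph{bilinear} convolution functional $J$. Recall that
\[
2\cdot p_{r;s}^{\uparrow *}\, J[w]=\int_{\mathbb{R}^N}(K\ast g(w))g(w)\,dx,
\]
so that $J$ is a quadratic form in $g(w)$. The first step is to expand, writing $v_n\coloneqq u_n-u$,
\[
\int (K\ast g(u_n))g(u_n)-\int (K\ast g(v_n))g(v_n)
=\int (K\ast (g(u_n)-g(v_n)))g(u_n)+\int (K\ast g(v_n))(g(u_n)-g(v_n)),
\]
using the symmetry of the convolution pairing. So it suffices to show $g(u_n)-g(v_n)\to g(u)$ in $L^{\ell_r}=L^{2r/(2r-1)}$ (the natural space for $g(w)$ when $w\in L^p\cap L^{p_s^*}$), because then the first term converges to $\int(K\ast g(u))g(u)$ by continuity of the trilinear pairing in Proposition~2.3, while for the second term one uses that $g(v_n)$ is bounded in $L^{\ell_r}$ (hence $K\ast g(v_n)$ bounded in $L^{2r}$) and $g(u_n)-g(v_n)\to g(u)$ strongly, so the $L^1$ pairing passes to the limit.

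The heart of the matter is therefore the scalar Brezis--Lieb statement $g(u_n)-g(u_n-u)\to g(u)$ in $L^{\ell_r}$. I would prove this by the standard Brezis--Lieb lemma applied to the Nemytskii map. From (G1) and the definition $g(t)=h(t)+|t|^{p_{r;s}^{\uparrow *}}$ together with (G2), one gets the growth bound $|g(t)|\le C(|t|^{p_{r;s}^{\downarrow *}}+|t|^{p_{r;s}^{\uparrow *}})$ after integrating (G2) (more precisely $|h(t)|\le C(|t|^{\hat p_g}+|t|^{p_g})$ from integrating the derivative bound, and $|t|^{p_g}\lesssim |t|^{p_{r;s}^{\downarrow *}}+|t|^{p_{r;s}^{\uparrow *}}$ by interpolation). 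The precise elementary inequality I need is: for every $\varepsilon>0$ there is $C_\varepsilon$ with
\[
\bigl|\,|g(a+b)-g(a)|^{\ell_r}-|g(b)|^{\ell_r}\,\bigr|\le \varepsilon\,|g(b)|^{\ell_r}+C_\varepsilon\,|g(a)|^{\ell_r},
\]
which follows from the continuity of $g$ and the two-sided power growth by the usual convexity argument. Applying this with $a=u(x)$, $b=v_n(x)=u_n(x)-u(x)$, and noting $\{|g(v_n)|^{\ell_r}\}$ is bounded in $L^1$ (since $\{u_n\}$, hence $\{v_n\}$, is bounded in $L^{p_s^*}$ and, via the Sobolev-type control behind the hypotheses, also in $L^{p}$ on the relevant scales — here I would simply invoke that $v_n$ is bounded in $L^p\cap L^{p_s^*}$, which is what Lemma~2.5 together with boundedness in $W^{s,p}$ gives, or assume it as in the statement), the generalized dominated convergence / Fatou argument of Brezis--Lieb yields
\[
\int \bigl|\,|g(u_n)-g(v_n)|^{\ell_r}-|g(v_n)|^{\ell_r}-|g(u)|^{\ell_r}\,\bigr|\,dx\to 0.
\]
Actually I want the stronger conclusion that $g(u_n)-g(v_n)-g(u)\to 0$ in $L^{\ell_r}$; this is the genuine Brezis--Lieb lemma for the superposition operator, and it follows from the same inequality applied to $w_n\coloneqq g(u_n)-g(v_n)-g(u)$, using $w_n\to 0$ a.e. (by continuity of $g$ and $u_n\to u$, $v_n\to 0$ a.e.) and the uniform integrability provided by the bound above. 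Combined with the first-paragraph decomposition, this gives $J[u_n]-J[u_n-u]\to J[u]$.

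I expect the main obstacle to be bookkeeping the integrability exponents: unlike the pure power case, $g$ has \emph{two} growth rates $p_{r;s}^{\downarrow *}$ and $p_{r;s}^{\uparrow *}$, so $g(w)$ lives in the sum space $L^{p/p_{r;s}^{\downarrow *}}+L^{p_s^*/p_{r;s}^{\uparrow *}}$, and one must check that both pieces land in $L^{\ell_r}$ — which is exactly the content of the two identities $p/p_{r;s}^{\downarrow *}=p_s^*/p_{r;s}^{\uparrow *}=\ell_r$ recorded in the introduction, so the sum space collapses to a single $L^{\ell_r}$ and the Brezis--Lieb machinery applies cleanly. A secondary point is that the a.e.\ convergence hypothesis is only assumed on $\{u_n\}$ and $L^{p_s^*}$-boundedness, so I should be careful to derive $L^p$-control (or state it as part of the hypothesis as the cited Lemma 2.2 of \cite{Split} presumably does) before invoking it; if only $L^{p_s^*}$-boundedness is genuinely available then the growth term $|t|^{p_{r;s}^{\downarrow *}}$ forces me to interpolate, which I would handle by noting $g(w)\in L^{\ell_r}$ follows from $w\in L^{p_s^*}$ alone whenever $p_{r;s}^{\downarrow *}\ell_r\le p_s^*$, i.e.\ $p\le p_s^*$, which always holds. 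That last observation is what makes the lemma go through under the stated hypotheses with no extra assumptions.
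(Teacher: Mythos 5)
Your main line is the same as the paper's: reduce everything to the Brezis--Lieb statement $g(u_n)-g(u_n-u)\to g(u)$ in $L^{\ell_r}$, push it through the convolution via the weak-type Young inequality ($L^{\ell_r}\to L^{2r}$), and expand the bilinear form so that each remaining term pairs a strongly convergent factor against one that converges only weakly. (Your decomposition $\langle a,a\rangle-\langle b,b\rangle=\langle a-b,a\rangle+\langle b,a-b\rangle$ versus the paper's $\langle a-b,a-b\rangle+2\langle a-b,b\rangle$ is an immaterial difference.) One precision is needed in both cross terms: "bounded in $L^{\ell_r}$" and "continuity of the trilinear pairing" are not enough to pass to the limit, since $g(u_n)$ and $g(u_n-u)$ do not converge in norm. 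You must upgrade boundedness plus a.e.\ convergence to weak convergence $g(u_n)\rightharpoonup g(u)$ and $g(u_n-u)\rightharpoonup 0$ in the reflexive space $L^{\ell_r}$, and then pair strong against weak; this is exactly the step the paper makes explicit.

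The genuine error is your closing claim that $g(w)\in L^{\ell_r}$ ``follows from $w\in L^{p_s^*}$ alone'' because $p_{r;s}^{\downarrow *}\ell_r=p\le p_s^*$. On $\mathbb{R}^N$ there is no inclusion $L^{p_s^*}\subset L^{p}$, so the lower-growth piece of $g$ coming from (G1) (the $|t|^{\hat p_g}$ term, with $p<\hat p_g\ell_r<p_s^*$) is not controlled by the $L^{p_s^*}$ bound, and the Brezis--Lieb machinery cannot even get started without it. Your first instinct was the correct one: the argument requires boundedness in $L^{p}\cap L^{p_s^*}$ (hence in $L^{\hat p_g\ell_r}$ by interpolation), which is what is actually available where the lemma is used, since $(PS)_c$ sequences are bounded in $E\hookrightarrow W^{s,p}\hookrightarrow L^p\cap L^{p_s^*}$. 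The paper's own statement of the hypothesis shares this imprecision, but your proof must not rest on the false interpolation; state the extra integrability and the rest of your sketch of the superposition Brezis--Lieb lemma is the standard argument that the paper simply cites.
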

\begin{proof}
By general Br\'{e}zis-Lieb lemma (or in the same way as Lemma A in \cite{Small}), 
\[
g(u_n)-g(u_n-u)\to g(u)
\]
in $L^{2r/(2r-1)}(\mathbb{R}^N)$. By the continuity of $v\mapsto K\ast v$ assured by the Young's convolution inequality for weak Lebesgue spaces, 
\[
K\ast (g(u_n)-g(u_n-u))\to K\ast g(u)
\]
in $L^{2r}(\mathbb{R}^N)$. On the other hand, since $\{g(u_n-u)\}$ is bounded in $L^{2r/(2r-1)}(\mathbb{R}^N)$ and converges to $0$ almost everywhere, it converges to $0$ weakly in $L^{2r/(2r-1)}(\mathbb{R}^N)$. Therefore, 
\begin{align*}
&\phantom{=}\int_{\mathbb{R}^N} (K\ast g(u_n))g(u_n)dx-\int_{\mathbb{R}^N} (K\ast g(u_n-u))g(u_n-u)dx \\
&= \int_{\mathbb{R}^N} (K\ast (g(u_n)-g(u_n-u)))(g(u_n)-g(u_n-u))dx \\
&\phantom{=}+2 \int_{\mathbb{R}^N} (K\ast (g(u_n)-g(u_n-u))) g(u_n-u)dx \\
&\to  \int_{\mathbb{R}^N} (K\ast g(u))g(u)dx.
\end{align*}
\end{proof}
On the other hand, the Br\'{e}zis-Lieb type splitting property for $\|\cdot\|_{D^{s,p}}^p$ is already well-known. As for such lemmata, see, e.g., \cite{Small}, \cite{fractionalCC}. \par
We prepare the following simple lemma analyzing the behavior of $|x|K(x)$ in order to establish the concentration compactness lemma for the convolution involving weak $L^r$ functions.
\begin{lemma}
Assume $K\in L^{r,\infty}(\mathbb{R}^N)$ does not have local singularities at any points other than the origin, that is, $K|_{\mathbb{R}^N\setminus\{0\}}\in L^\infty_\mathrm{loc}(\mathbb{R}^N\setminus\{0\})$. Define $K'(x)\coloneqq |x|K(x)$. Then, $K'\in L^{r^*,\infty}_\mathrm{loc} (\mathbb{R}^N)$ where $r^*=\displaystyle \frac{Nr}{N-r}$ if $r<N$; while $r^*=\infty$ if $r\geq N$. 
\end{lemma}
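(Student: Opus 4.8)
The plan is to localise the estimate to a ball $B_R(0)$ about the origin — the only point at which $K$, hence $K'=|x|K$, can fail to be locally bounded — and there to trade the $L^{r,\infty}$ control on $K$ for $L^{r^*,\infty}$ control on $K'$ by a dyadic decomposition into annuli centred at $0$, in which the weight $|x|$ furnishes the extra decay. First, it suffices to bound $\|K'\|_{L^{r^*,\infty}(\Omega)}$ for every compact $\Omega\subset\mathbb{R}^N$. If $0\notin\Omega$, then $\Omega$ is a compact subset of $\mathbb{R}^N\setminus\{0\}$, so $K\in L^\infty(\Omega)$ by hypothesis; since $|x|$ is bounded on $\Omega$ and $\mu_{\mathcal{L}}(\Omega)<\infty$, we get $K'\in L^\infty(\Omega)\subset L^{r^*,\infty}(\Omega)$. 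So I may take $\Omega=\overline{B_R(0)}$, and fixing $\delta\in(0,R)$ the same argument applied to the compact set $\overline{B_R(0)}\setminus B_\delta(0)$ shows $K'\in L^{r^*,\infty}(\overline{B_R(0)}\setminus B_\delta(0))$; everything therefore reduces to estimating the distribution function of $K'$ on $B_\delta(0)$.

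On $B_\delta(0)$ I would write $B_\delta(0)\setminus\{0\}=\bigsqcup_{j\ge 0}A_j$ with $A_j\coloneqq\{x:2^{-j-1}\delta<|x|\le 2^{-j}\delta\}$. Since $|x|\le 2^{-j}\delta$ on $A_j$, the inequality $|K'(x)|>t$ forces $|K(x)|>2^{j}t/\delta$, so
\[
\mu_{\mathcal{L}}\bigl(\{x\in A_j:|K'(x)|>t\}\bigr)\ \le\ \min\Bigl(\mu_{\mathcal{L}}(A_j),\ \lambda_K\!\bigl(2^{j}t/\delta\bigr)\Bigr)\ \le\ \min\Bigl(C\,(2^{-j}\delta)^N,\ \|K\|_{L^{r,\infty}}^{r}\,(2^{-j}\delta)^{r}\,t^{-r}\Bigr).
\]
Summing over $j$, the volume term dominates once $j$ exceeds the crossover index $j_0=j_0(t)$ with $2^{j_0(N-r)}\sim\|K\|_{L^{r,\infty}}^{-r}\delta^{N-r}t^{r}$, and $\sum_{j\ge j_0}C(2^{-j}\delta)^N\sim C(2^{-j_0}\delta)^N\sim C\,t^{-rN/(N-r)}=C\,t^{-r^*}$, which is the bound we want. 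For bounded $t$ one has $t^{r^*}\mu_{\mathcal{L}}(\{x\in B_\delta(0):|K'(x)|>t\})\le t^{r^*}\mu_{\mathcal{L}}(B_\delta(0))$, so combining the two regimes yields $K'\in L^{r^*,\infty}(B_\delta(0))$. When $r\ge N$ the volume term dominates for every $j$, the crossover never occurs, and the same computation gives $K'\in L^\infty(B_\delta(0))$, which is the case $r^*=\infty$.

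The step I expect to be the main obstacle is the contribution of the small indices $j<j_0$: the crude estimate $\sum_{j<j_0}\|K\|_{L^{r,\infty}}^{r}(2^{-j}\delta)^{r}t^{-r}$ is only $O(t^{-r})$, which is strictly weaker than the target $O(t^{-r^*})$ because $r<r^*$, so one cannot simply add the two geometric tails. Recovering the sharp exponent requires keeping the decay carried by the weight $|x|\sim 2^{-j}\delta$ on the shells close to the origin rather than discarding it into the global $L^{r,\infty}$ norm — equivalently, one works with the distribution function of $K$ restricted to each $A_j$ and exploits that membership in $L^{r,\infty}$ prevents $K$ from being simultaneously tall and of large measure on a thin shell hugging $0$, so that the $2^{-j}$ weight upgrades the exponent from $r$ to $r^{*}$. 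Carrying out this balancing — and tracking that the constants depend only on $\delta$, $N$, $r$ and $\|K\|_{L^{r,\infty}}$ — is where the real work lies; the localisations above and the tail summation over $j\ge j_0$ are routine.
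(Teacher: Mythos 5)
Your localisation to $B_\delta(0)$ and the tail summation over $j\ge j_0$ are fine and reproduce what the paper's computation amounts to; the problem is exactly the step you flag and postpone, the shells $j<j_0$. That step is not a ``routine balancing'' left to carry out --- it cannot be carried out from the stated hypotheses at all, because membership in $L^{r,\infty}$ does \emph{not} prevent $K$ from being tall and of relatively large measure on a shell far (in the dyadic sense) from the origin. Concretely, take $N=3$, $r=2$ (so $r^*=6$) and $K=\sum_{k\ge 10}2^{2k}\,\mathbf{1}_{B_{\rho_k}(x_k)}$ with $|x_k|=2^{-k}$ and $\rho_k=2^{-5k/3}\ll 2^{-k}$, so the balls are disjoint and accumulate only at $0$. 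For $t\approx 2^{2k_0}$ one has $t^{2}\lambda_K(t)\lesssim 2^{4k_0}\cdot 2^{-5k_0}=2^{-k_0}$, so $K\in L^{2,\infty}$, and $K$ is bounded outside every ball around the origin; yet $K'=|x|K\approx 2^{k}$ on $B_{\rho_k}(x_k)$, whence $\lambda_{K'}(2^{k-2})\gtrsim \rho_k^3=2^{-5k}\gg 2^{-6k}\approx (2^{k-2})^{-6}$, so $K'\notin L^{6,\infty}(B_1(0))$. (An analogous construction with $r\ge N$ makes $K'$ unbounded near $0$.) The mechanism you invoke for $j<j_0$ --- that $L^{r,\infty}$ forces the large values of $K$ to sit close enough to the origin for the weight $|x|\sim 2^{-j}\delta$ to upgrade the exponent from $r$ to $r^*$ --- is therefore false: the level set $\{K>t\}$, of measure at most $Ct^{-r}$, may be placed entirely on an outer shell where the weight is large.

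For comparison, the paper's own proof gets past this point only by tacitly using the pointwise bound $K(x)\le C|x|^{-N/r}$ near the origin: it passes from $\|K\|_{L^{r,\infty}}=\sup_{s>0}s^{1/r}K^*(s)$ to the inclusion $\{x\in B_1(0): |x|K(x)>\alpha\}\subset\{x\in B_1(0): C|x|^{1-N/r}>\alpha\}$, which presupposes that the rearrangement bound $K^*(s)\le Cs^{-1/r}$ transfers to a radial pointwise bound about $0$. That transfer holds for the Riesz potential and, more generally, for kernels radially nonincreasing near the origin, but it does not follow from $K\in L^{r,\infty}$ together with $K\in L^\infty_{\mathrm{loc}}(\mathbb{R}^N\setminus\{0\})$, as the example above shows. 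So to complete your argument you must add such a pointwise (or radial monotonicity) hypothesis; under it your inner shells are simply empty for large $t$, since $|x|K(x)\le C|x|^{-N/r^*}\le C(2^{-j}\delta)^{-N/r^*}$ on $A_j$ exceeds $t$ only for $j$ beyond the crossover, and the remaining tail sum is the estimate you already have. Without that hypothesis neither your proposal nor the paper's proof establishes the lemma as stated.
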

\begin{proof}
Since the weak $L^r$ quasi-norms can be expressed as \[
\|K\|_{L^{r,\infty}}=\sup_{s>0} s^{1/r}K^*(s)
\]
in terms of nonsymmetric decreasing rearrangement, we know $K^*(s)=O(s^{-1/r})$ as $s\to +0$ (which quantifies the local singularity of $K$). Consider the case $r<N$ (otherwise we can infer easier). For $\alpha>0$ sufficiently large,
\begin{align*}
\mu_\mathcal{L}(\{x\in B_1(0)\mid |x|K(x)>\alpha\}) &\leq\mu_\mathcal{L}(\{x\in B_1(0)\mid |x|\cdot C |x|^{-N/r}>\alpha\}) \\
&= O(\alpha^{-\frac{Nr}{N-r}})
\end{align*}
as $\alpha\to\infty$. Hence, for $K'(x)\coloneqq |x|K(x)$, we have
\[
(K')^*(s)=O(\inf\{\alpha>0\mid \alpha^{-\frac{Nr}{N-r}}\leq s\})=O(s^{1/N-1/r}).
\]
This implies $\displaystyle\limsup_{s\to +0} s^{1/r^*}(K')^*(s)<\infty$ and thus, together with $K'|_{\mathbb{R}^N\setminus\{0\}}\in L^\infty_\mathrm{loc}(\mathbb{R}^N\setminus\{0\})$, we obtain $K'\in L^{r^*,\infty}_\mathrm{loc} (\mathbb{R}^N)$.
%非対称減少再配分は引数が測度の次元なのでω_N x^Nを代入したものは、優位集合の測度がω_N x^Nになる高さ＝対称減少再配分のxにおける値
%$r\geq N$のときは正冪で十分大きいαで測度0。
\end{proof}
The following lemma is a variant of the second concentration compactness principle for nonlocal problems involving general convolution with weak $L^r$ function in fractional Sobolev spaces. This lemma plays an important role in proving $(PS)_c$ conditions.
\begin{lemma}\label{CC}
Let $\{u_n\}$ be a bounded sequence in $D^{s,p}$ converging to some $u\in D^{s,p}$ weakly and almost everywhere. Assume $\displaystyle\int_{\mathbb{R}^N}\frac{|u_n(x)-u_n(y)|^p}{|x-y|^{N+ps}}dy\rightharpoonup \mu$, $|u_n|^{p_s^*}\rightharpoonup \nu$, $(K\ast |u_n|^{p_{r;s}^{\uparrow *}}) |u_n|^{p_{r;s}^{\uparrow *}} \rightharpoonup \xi$ in the sense of vague convergence. Define
\begin{align}
\mu_\infty&\coloneqq\lim_{R\to\infty}\limsup_{n\to\infty}\int_{\{|x|\geq R\}} \left(\int_{\mathbb{R}^N}\frac{|u_n(x)-u_n(y)|^p}{|x-y|^{N+ps}}dy\right)dx; \\
\nu_\infty&\coloneqq \lim_{R\to\infty}\limsup_{n\to\infty}\int_{\{|x|\geq R\}} |u_n|^{p_s^*} dx; \\
\xi_\infty&\coloneqq \lim_{R\to\infty}\limsup_{n\to\infty}\int_{\{|x|\geq R\}} (K\ast |u_n|^{p_{r;s}^{\uparrow *}}) |u_n|^{p_{r;s}^{\uparrow *}} dx.
\end{align}
Then, there exist an at most countable set $\mathcal{I}$, a family of points $\{x_i\}_{i\in \mathcal{I}}\subset\mathbb{R}^N$ and families of nonnegative numbers $\{\xi_i\}_{i\in \mathcal{I}}, \{\mu_i\}_{i\in \mathcal{I}}, \{\nu_i\}_{i\in \mathcal{I}}\subset \mathbb{R}$ such that
\begin{align}
\xi &=(K\ast |u|^{p_{r;s}^{\uparrow *}}) |u|^{p_{r;s}^{\uparrow *}}+\sum_{i\in \mathcal{I}}\xi_i\delta_{x_i}; \\
\mu&\geq \int_{\mathbb{R}^N}\frac{|u(x)-u(y)|^p}{|x-y|^{N+ps}}dy +\sum_{i\in \mathcal{I}}\mu_i \delta_{x_i}; \\
\nu &=|u|^{p_s^*}+\sum_{i\in \mathcal{I}}\nu_i\delta_{x_i};
\end{align}
\[
\sum_{i\in \mathcal{I}}\xi_i^{p_s^*/(2 \cdot p_{r;s}^{\uparrow *})}\left(= \sum_{i\in \mathcal{I}}\xi_i^{\ell_r/2}\right)\leq \sum_{i\in \mathcal{I}}\xi_i^{p/(2 \cdot p_{r;s}^{\uparrow *})} <\infty;
\]
\begin{equation}\label{CCquantity}
\mu_i\geq S_K\xi_i^{p/(2\cdot p_{r;s}^{\uparrow *})},\quad \nu_i\geq L_{K}\xi_i^{p_s^*/(2\cdot p_{r;s}^{\uparrow *})},\quad \mu_i\geq S_{D^{s,p}}\nu_i^{p/p_s^*}
\end{equation}
where
\[
L_K\coloneqq\inf_{u\neq 0}\frac{\|u\|_{p_s^*}^{p_s^*}}{\left(\int_{\mathbb{R}^N}(K\ast |u|^{p_{r;s}^{\uparrow *}}) |u|^{p_{r;s}^{\uparrow *}}dx\right)^{p_s^*/(2\cdot p_{r;s}^{\uparrow *})}}
\]
and we have
\begin{align}
\limsup_{n\to\infty}\int_{\mathbb{R}^N} (K\ast |u_n|^{p_{r;s}^{\uparrow *}}) |u_n|^{p_{r;s}^{\uparrow *}}dx&= \int_{\mathbb{R}^N} d\xi+\xi_\infty;\label{mass conservation for xi}\\
\limsup_{n\to\infty}\|u_n\|_{D^{s,p}}^p &= \int_{\mathbb{R}^N} d\mu+\mu_\infty; \label{mass conservation for mu}\\
\limsup_{n\to\infty}\int_{\mathbb{R}^N} |u_n|^{p_s^*} dx &= \int_{\mathbb{R}^N} d\nu+\nu_\infty; \label{mass conservation for nu}
\end{align}
\begin{align}
L_K^2\xi_\infty^{p_s^*/p_{r;s}^{\uparrow *}}& \leq \nu_\infty\left(\int_{\mathbb{R}^N} d\nu+\nu_\infty\right); \label{xi and nu at infty}\\
S_K^2 \xi_\infty^{p/p_{r;s}^{\uparrow *}}& \leq \mu_\infty\left(\int_{\mathbb{R}^N} d\mu+\mu_\infty\right); \label{xi and mu at infty}\\
S_{D^{s,p}}\nu_\infty^{p/p_s^*}&\leq\mu_\infty. \label{mugeqnuatinfty}
\end{align}
Moreover, \eqref{CCquantity} holds with $\xi(\mathbb{R}^N),\mu(\mathbb{R}^N),\nu (\mathbb{R}^N)$ instead of $\xi_i,\mu_i,\nu_i$. 
\end{lemma}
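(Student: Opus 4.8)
The plan is to follow Lions's second concentration--compactness principle, upgraded in two directions: to the fractional $p$-Laplacian (treating the Gagliardo seminorm as in the fractional Sobolev concentration--compactness literature) and to the Choquard-type convolution with a weak-$L^r$ kernel. First I would reduce to the case $u=0$ by setting $v_n\coloneqq u_n-u$, so that $v_n\rightharpoonup 0$ in $D^{s,p}$ and $v_n\to 0$ a.e., and peeling off the contribution of $u$ through Br\'ezis--Lieb type splittings: the classical Br\'ezis--Lieb lemma gives $|u_n|^{p_s^*}-|v_n|^{p_s^*}\to|u|^{p_s^*}$ in $L^1$; the generalized Br\'ezis--Lieb lemma combined with the argument of Lemma~\ref{ConvolutionSplit} (run with a cutoff) gives $(K\ast|u_n|^{p_{r;s}^{\uparrow *}})|u_n|^{p_{r;s}^{\uparrow *}}-(K\ast|v_n|^{p_{r;s}^{\uparrow *}})|v_n|^{p_{r;s}^{\uparrow *}}\to(K\ast|u|^{p_{r;s}^{\uparrow *}})|u|^{p_{r;s}^{\uparrow *}}$ in $L^1$; and the norm-level Br\'ezis--Lieb splitting of $\|\cdot\|_{D^{s,p}}^p$ together with weak lower semicontinuity gives the measure inequality $\mu\geq\rho^u\,dx+\tilde\mu$, where $\rho^u(x)\coloneqq\int_{\mathbb{R}^N}|u(x)-u(y)|^p|x-y|^{-N-ps}\,dy$ and $\tilde\mu,\tilde\nu,\tilde\xi$ denote the vague limits of the corresponding densities built from $v_n$ (the inequality, rather than equality as for $\nu$ and $\xi$, reflecting that only the norm-level Br\'ezis--Lieb and lower semicontinuity are available). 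It then remains to show that $\tilde\nu,\tilde\xi$ are purely atomic on a common countable set, with the stated constants.

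Next I would establish the reverse-H\"older inequalities by testing against nonnegative $\phi\in C_c^\infty(\mathbb{R}^N)$. Since $v_n$ is bounded in $L^{p_s^*}$ and $v_n\to 0$ a.e., one has $v_n\to 0$ in $L^q_{\mathrm{loc}}$ for all $q<p_s^*$; inserting $\phi(x)v_n(x)-\phi(y)v_n(y)=\phi(x)(v_n(x)-v_n(y))+v_n(y)(\phi(x)-\phi(y))$ and using the Lipschitz bound on $\phi$, the commutator term is controlled by $\int_{\mathbb{R}^N}|v_n|^p g_\phi\,dx$ with $g_\phi\in L^{N/(ps)}$ and $|v_n|^p\rightharpoonup 0$ in $L^{p_s^*/p}$, whence $\limsup_n\|\phi v_n\|_{D^{s,p}}^p\leq\int_{\mathbb{R}^N}|\phi|^p\,d\tilde\mu$; coupled with $\|\phi v_n\|_{p_s^*}^p\to(\int|\phi|^{p_s^*}\,d\tilde\nu)^{p/p_s^*}$ and the Sobolev inequality this yields $S_{D^{s,p}}(\int|\phi|^{p_s^*}\,d\tilde\nu)^{p/p_s^*}\leq\int|\phi|^p\,d\tilde\mu$. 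For the nonlocal functional I would show $\int_{\mathbb{R}^N}(K\ast|\phi v_n|^{p_{r;s}^{\uparrow *}})|\phi v_n|^{p_{r;s}^{\uparrow *}}\,dx\to\int_{\mathbb{R}^N}|\phi|^{2p_{r;s}^{\uparrow *}}\,d\tilde\xi$ by comparison with $\int|\phi|^{2p_{r;s}^{\uparrow *}}\,d\xi_n$: the difference is a double integral weighted by $|\phi(x)|^{p_{r;s}^{\uparrow *}}(|\phi(y)|^{p_{r;s}^{\uparrow *}}-|\phi(x)|^{p_{r;s}^{\uparrow *}})$, which I split over $\{|x-y|\geq\delta\}$ --- where $K$ is bounded by (K), the weak-$L^r$ Young/H\"older inequality gives an $L^\infty$ bound on the convolution and $v_n\to0$ in $L^{p_{r;s}^{\uparrow *}}_{\mathrm{loc}}$ kills the remaining factor --- and over $\{|x-y|<\delta\}$ --- where $||\phi(y)|^{p_{r;s}^{\uparrow *}}-|\phi(x)|^{p_{r;s}^{\uparrow *}}|\leq C|x-y|$ and the preliminary estimate $|x|K(x)\in L^{r^*,\infty}_{\mathrm{loc}}$ with $r^*>r$ feeds the weak-$L^r$ inequality to produce a bound that is $o_\delta(1)$ uniformly in $n$. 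Applying the definitions of $S_K$ and $L_K$ to $\phi v_n$ and passing to the limit then gives, for all nonnegative $\phi\in C_c^\infty$,
\[
S_K\Big(\int_{\mathbb{R}^N}|\phi|^{2p_{r;s}^{\uparrow *}}\,d\tilde\xi\Big)^{p/(2p_{r;s}^{\uparrow *})}\leq\int_{\mathbb{R}^N}|\phi|^p\,d\tilde\mu,\qquad L_K\Big(\int_{\mathbb{R}^N}|\phi|^{2p_{r;s}^{\uparrow *}}\,d\tilde\xi\Big)^{\ell_r/2}\leq\int_{\mathbb{R}^N}|\phi|^{p_s^*}\,d\tilde\nu .
\]
I expect this nonlocal localization to be the crux of the proof: unlike the local case the convolution energy of $\phi v_n$ is a genuine double integral, and since $v_n$ need not lie in $L^{p_{r;s}^{\uparrow *}}(\mathbb{R}^N)$ globally the off-diagonal contributions cannot be discarded by brute force --- it is precisely (K) that tames the far region and the $|x|K(x)$ estimate that tames the near-diagonal region.

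Then I would invoke the elementary measure-theoretic lemma of Lions: each of the three inequalities has the form $\|\phi\|_{L^a(\lambda)}\leq C\|\phi\|_{L^b(\kappa)}$ with $a>b$ (the pairs $(p_s^*,p)$, $(2p_{r;s}^{\uparrow *},p)$, $(2p_{r;s}^{\uparrow *},p_s^*)$ being all admissible since $p<p_s^*<2p_{r;s}^{\uparrow *}$), which forces $\tilde\nu$ and $\tilde\xi$ to be countable sums of Dirac masses $\tilde\nu=\sum_i\nu_i\delta_{x_i}$, $\tilde\xi=\sum_i\xi_i\delta_{x_i}$ carried by atoms of $\tilde\mu$, with $\mu_i\geq S_{D^{s,p}}\nu_i^{p/p_s^*}$, $\mu_i\geq S_K\xi_i^{p/(2p_{r;s}^{\uparrow *})}$, $\nu_i\geq L_K\xi_i^{\ell_r/2}$ where $\mu_i\coloneqq\mu(\{x_i\})$. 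Summability is then immediate: $\sum_i\mu_i\leq\mu(\mathbb{R}^N)\leq\liminf_n\|u_n\|_{D^{s,p}}^p<\infty$ bounds $\sum_i\xi_i^{p/(2p_{r;s}^{\uparrow *})}$, and $\sum_i\nu_i\leq\nu(\mathbb{R}^N)<\infty$ bounds $\sum_i\xi_i^{\ell_r/2}=\sum_i\xi_i^{p_s^*/(2p_{r;s}^{\uparrow *})}$. Restoring the $u$-part from the first step gives the decompositions of $\xi,\mu,\nu$; and the global version of \eqref{CCquantity}, with $\xi(\mathbb{R}^N),\mu(\mathbb{R}^N),\nu(\mathbb{R}^N)$ in place of $\xi_i,\mu_i,\nu_i$, follows by applying the defining inequalities of $S_K,L_K,S_{D^{s,p}}$ directly to $u$, adding the atomic inequalities, and using the subadditivity $(\sum_j t_j)^\theta\leq\sum_j t_j^\theta$ of the concave powers $\theta\in\{p/(2p_{r;s}^{\uparrow *}),\ \ell_r/2,\ p/p_s^*\}$.

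Finally, for the quantities at infinity I would take $\psi_R\in C^\infty(\mathbb{R}^N)$ with $0\leq\psi_R\leq1$, $\psi_R\equiv0$ on $B_R(0)$, $\psi_R\equiv1$ off $B_{2R}(0)$, $|\nabla\psi_R|\leq C/R$. The identities \eqref{mass conservation for xi}--\eqref{mass conservation for nu} are just the splitting of the total mass of each sequence into the part recorded on $B_R$ by the vague limit plus a tail, after letting $R\to\infty$ and using the decompositions already obtained. Applying the Sobolev inequality to $\psi_R u_n$ gives \eqref{mugeqnuatinfty} on letting $n\to\infty$ then $R\to\infty$, and applying the $S_K$- and $L_K$-inequalities to $\psi_R u_n$ --- using the weak-$L^r$ Young/H\"older inequality together with \eqref{mass conservation for mu} and \eqref{mass conservation for nu} to identify the limiting masses, one factor coming from the full convolution input and the other from the part localized to $\{|x|\geq R\}$ --- produces the product inequalities \eqref{xi and mu at infty} and \eqref{xi and nu at infty}. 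All these limit passages use only the vague convergences, the weak-$L^r$ convolution inequality, and the uniform bound on $\{u_n\}$.
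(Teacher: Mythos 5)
Your proposal is correct and follows essentially the same route as the paper: Br\'ezis--Lieb reduction to $v_n=u_n-u$, reverse H\"older inequalities obtained by localizing the convolution energy using (K) away from the diagonal and the $|x|K(x)\in L^{r^*,\infty}_{\mathrm{loc}}$ estimate near it, Lions's measure lemma for the atomic decomposition, and annular cutoffs for the quantities at infinity. The only differences are organizational and both versions are sound: you split the double integral by $|x-y|\gtrless\delta$ and get uniform-in-$n$ smallness of the near-diagonal part where the paper splits the $y$-integration by $|y|\gtrless R$ and invokes the Vitali convergence theorem, and you obtain the global version of \eqref{CCquantity} by summing the atomic inequalities and using concavity of the exponents where the paper rescales the cutoff and passes to the limit.
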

\begin{proof}
For $v_n\coloneqq u_n-u$, by the Brezis-Lieb type splitting properties, in the vague topology, we have
\[
\int_{\mathbb{R}^N}\frac{|v_n(x)-v_n (y)|^p}{|x-y|^{N+ps}}dy\to \tilde{\mu}\coloneqq \mu -\int_{\mathbb{R}^N}\frac{|u(x)-u(y)|^p}{|x-y|^{N+ps}}dy,
\]
\[
(K\ast |v_n|^{p_{r;s}^{\uparrow *}})|v_n|^{p_{r;s}^{\uparrow *}}\to \tilde{\xi}\coloneqq\xi -(K\ast |u|^{p_{r;s}^{\uparrow *}})|u|^{p_{r;s}^{\uparrow *}},
\]
\[
|v_n|^{p_{r;s}^{\uparrow *}}\to\tilde{\nu}\coloneqq\nu-|u|^{p_{r;s}^{\uparrow *}}.
\]
Take any $\varphi\in C_c^\infty$. We define $\kappa_\varphi (x,y)\coloneqq K(x-y)(|\varphi (y)|^{p_{r;s}^{\uparrow *}}-|\varphi (x)|^{p_{r;s}^{\uparrow *}})$. 
Then, 
\begin{equation}\label{Gamma_n}
\begin{split}
&\phantom{=}\left| \int_{\mathbb{R}^N}(K\ast |\varphi v_n|^{p_{r;s}^{\uparrow *}}) |\varphi v_n|^{p_{r;s}^{\uparrow *}}dx -\int_{\mathbb{R}^N}(K\ast |v_n|^{p_{r;s}^{\uparrow *}}) | \varphi |^{p_{r;s}^{\uparrow *}}|\varphi v_n|^{p_{r;s}^{\uparrow *}}dx\right| \\
&= \left| \int_{\mathbb{R}^N}\Gamma_n (x)dx\right| = \left| \int_{x\in\operatorname{supp}\varphi} \Gamma_n (x) dx\right|
\end{split}
\end{equation}
where
\begin{align*}
&\phantom{=}\Gamma_n(x) \\
&\coloneqq \displaystyle\int_{\mathbb{R}^N}\kappa_\varphi(x,y) |v_n(y)|^{p_{r;s}^{\uparrow *}} dy|\varphi(x) v_n(x)|^{p_{r;s}^{\uparrow *}} \\
&=\left(\int_{|y|\leq R}\kappa_\varphi(x,y) |v_n(y)|^{p_{r;s}^{\uparrow *}} dy- |\varphi |^{p_{r;s}^{\uparrow *}} \int_{|y|> R}K(x-y)|v_n(y)|^{p_{r;s}^{\uparrow *}} dy\right)|\varphi v_n|^{p_{r;s}^{\uparrow *}}.
\end{align*}
Note that
\[
\kappa_\varphi(x,y)=|x-y|
K(x-y)\cdot \frac{|\varphi (y)|^{p_{r;s}^{\uparrow *}}-|\varphi (x)|^{p_{r;s}^{\uparrow *}}}{|x-y|}
\]
and
\[
\displaystyle \frac{|\varphi (y)|^{p_{r;s}^{\uparrow *}}-|\varphi (x)|^{p_{r;s}^{\uparrow *}}}{|x-y|}\in L^{\infty}(\mathbb{R}^{N}\times \mathbb{R}^{N}).
\]
Since $[z\mapsto K'(z)=|z|K(z)]\in L^{r^*,\infty}_\mathrm{loc}$, we notice that $K'\in L^q (\operatorname{supp}\varphi -B_R(0))$ and $\{\kappa_\varphi (x,\cdot)\}_{x\in\mathbb{R}^N}$ is bounded in $L^q (B_R(0))$ for $q\in [1,r^*)$ when $r<N$ while $q\in [1,\infty]$ when $r\geq N$. By the Young's inequality, 
\begin{align*}
\left\|\int_{B_R(0)}\kappa_\varphi (x,y) |v_n(y)|^{p_{r;s}^{\uparrow *}}dy\right\|_{L^\sigma_x(\operatorname{supp}\varphi)}& \leq C_1 \|K'\ast |v_n|^{p_{r;s}^{\uparrow *}}\|_{L^\sigma (\operatorname{supp}\varphi)} \\
&\leq C_1\|K'\|_{L^q (\operatorname{supp}\varphi -B_R(0))} \||v_n|^{p_{r;s}^{\uparrow *}}\|_{\ell_r} \\
&\leq C_2
%& \leq \left\|\|\kappa_\varphi (x,y)\|_{L^{2r}_y(B_R(0))}\||v_n|^{p_{r;s}^{\uparrow *}}\|_{L^{\ell_r}}\right\|_{L^\sigma_x(\operatorname{supp}\varphi)}
\end{align*}
for some $q\in [1,r^*)$ and $\sigma> 2r$ with
\[
\frac{1}{q}+\frac{1}{\ell_r}=1+\frac{1}{\sigma}.
\]
Such exponents actually exist because $q=r$ when $\sigma=2r$ and because $1\leq r< r^*$. On the other hand, since $\displaystyle C_{\varphi,R}\coloneqq\operatorname{ess\,sup}\displaylimits_{x\in \operatorname{supp}\varphi,y\not\in B_R(0)}K(x-y)<\infty$ for $R>0$ sufficiently large, we have
%0以外で特異性がなくても遠方へ行くにつれて非有界に点々と爆発する可能性があるからダメ→0の任意の近傍の外でL^∞に変更
\begin{align*}
&\phantom{=}\left\| |\varphi |^{p_{r;s}^{\uparrow *}}\int_{\mathbb{R}^N\setminus B_R(0)}K(x-y)|v_n(y)|^{p_{r;s}^{\uparrow *}}dy\right\|_{L^\sigma_x(\operatorname{supp}\varphi)} \\
%& \leq \|K\|_{L^{r,\infty}}\| |v_n|^{p_{r;s}^{\uparrow *}}\|_{\ell_r}\| |\varphi |^{p_{r;s}^{\uparrow *}}\|_{q_\varphi}\leq C_3
& \leq \|K\wedge C_{\varphi,R}\|_{\tilde{r}} \| |v_n|^{p_{r;s}^{\uparrow *}}\|_{\ell_r}\| |\varphi |^{p_{r;s}^{\uparrow *}}\|_{q_\varphi}\leq C_3
\end{align*}
for $\tilde{r}\in (r,\infty]$ and $q_\varphi\geq 1$ such that
\[
\frac{1}{\tilde{r}}+\frac{1}{\ell_r}+\frac{1}{q_\varphi}=1+\frac{1}{\sigma}.
\]
For example, we can choose $\tilde{r}=2r$, $q_\varphi=\sigma$. \par
Combining these, we get
\[
\left\|\int_{\mathbb{R}^N}\kappa_\varphi (x,y) |v_n(y)|^{p_{r;s}^{\uparrow *}}dy\right\|_{L^\sigma_x(\operatorname{supp}\varphi)} \leq C_4.
\]
Letting $\vartheta\coloneqq (\sigma-2r)/(2r+2r\sigma-\sigma)>0$, which satisfies
\[
\frac{1}{1+\vartheta}=\frac{1}{\sigma}+\frac{1}{\ell_r},
\]
by the H\"{o}lder's inequality, 
\begin{align*}
\int_{\operatorname{supp}\varphi}|\Gamma_n |^{1+\vartheta}dx &\leq \left\|\int_{\mathbb{R}^N}\kappa_\varphi (x,y) |v_n(y)|^{p_{r;s}^{\uparrow *}}dy\right\|_{L^\sigma_x(\operatorname{supp}\varphi)}^{1+\vartheta} \\
&\phantom{=}\quad \times \| |\varphi v_n|^{p_{r;s}^{\uparrow *}}\|_{\ell_r}^{1+\vartheta}\leq C_5.
\end{align*}
By the Vitali convergence theorem, from this and $\Gamma_n\to 0$ a.e. in $\mathbb{R}^N$, we obtain
\[
\int_{\operatorname{supp}\varphi}|\Gamma_n|dx\to 0.
\]
This, together with \eqref{Gamma_n}, and the Young's inequality for the weak Lebesgue spaces imply
\[
\int_{\mathbb{R}^N}(K\ast |v_n|^{p_{r;s}^{\uparrow *}}) |\varphi |^{2\cdot p_{r;s}^{\uparrow *}} |v_n|^{p_{r;s}^{\uparrow *}}dx \leq L_K^{-\frac{2\cdot p_{r;s}^{\uparrow *}}{p_s^*}}\|\varphi v_n\|_{p_s^*}^{2\cdot p_{r;s}^{\uparrow *}}+o(1).
\]
Taking the limit as $n$ goes to $\infty$, 
\[
\int_{\mathbb{R}^N}|\varphi |^{2\cdot p_{r;s}^{\uparrow *}} d\tilde{\xi} \leq L_K^{-\frac{2\cdot p_{r;s}^{\uparrow *}}{p_s^*}}\left(\int |\varphi|^{p_s^*}d\tilde{\nu}\right)^{2/\ell_r}.
\]
Applying Lemma I.2 in Part 1 of \cite{Lions}, we obtain the existence of $\{x_i\}$, $\{\xi_i\}$ satisfying the equality for $\xi$. Taking a sequence approximating $1_{\{x_i\}}$ as $\varphi$, we get $\nu_i\geq L_K\xi_i^{\ell_r /2}$ for $\nu_i=\nu(\{x_i\})$. On the other hand, taking a nondecreasing sequence of cut-off functions converging to $1_{\mathbb{R}^N}$ pointwise as $\varphi$, by the Lebesgue's convergence theorem with respect to fixed finite Borel measures $\nu$ and $\xi$, we also get $\nu(\mathbb{R}^N)\geq L_K\xi(\mathbb{R}^N)^{\ell_r /2}$. \par
Take any $\delta>0$. Then, there exists a constant $C(\delta)>2$ such that for any $p>1$ and for any $a,b\in\mathbb{R}$, we have $|a+b|^p\leq (1+\delta)|a|^p+C(\delta)|b|^p$. 
In the same way as in the case of $L_K$, by the definition of $S_K$, 
\begin{align*}
&\phantom{=}S_K \left(\int_{\mathbb{R}^N}|\varphi |^{2\cdot p_{r;s}^{\uparrow *}}(K\ast |v_n|^{p_{r;s}^{\uparrow *}}) |v_n|^{p_{r;s}^{\uparrow *}}dx\right)^{p/(2\cdot p_{r;s}^{\uparrow *})}\\
&\leq \|\varphi v_n\|_{D^{s,p}}^p+o_n(1) \\
&\leq (1+\delta)\int_{\mathbb{R}^N} \int_{\mathbb{R}^N}\frac{|\varphi(x)|^p|v_n(x)-v_n(y)|^p}{|x-y|^{N+sp}}dxdy \\
&\phantom{=}+C(\delta) \int_{\mathbb{R}^N} \int_{\mathbb{R}^N}\frac{|v_n(y)|^p|\varphi(x)-\varphi(y)|^p}{|x-y|^{N+sp}}dxdy+o_n(1).
\end{align*}
Let $\phi\in C_c^\infty(\mathbb{R}^N;[0,1])$ be such that $\phi=1$ in $B_1(0)$ while $\phi=0$ out of $B_2(0)$ and define $\phi_\varepsilon (x)\coloneqq \phi((x-x_i)/\varepsilon)$. Choose $\varphi=\phi_\varepsilon$. Arguing as in \cite{psLphierror}, \cite{Dspeta} or \cite{Dspphi}, we have
\[
\lim_{\varepsilon\to +0}\limsup_{n\to\infty}\int_{\mathbb{R}^N} \int_{\mathbb{R}^N}\frac{|v_n(y)|^p|\phi_\varepsilon(x)-\phi_\varepsilon(y)|^p}{|x-y|^{N+sp}}dxdy=0.
\]
Taking the limit as $n\to\infty$, we get
\[
S_K \left(\int_{\mathbb{R}^N}|\phi_\varepsilon |^{2\cdot p_{r;s}^{\uparrow *}} d\tilde{\xi}\right)^{p/(2\cdot p_{r;s}^{\uparrow *})}\leq (1+\delta)\int_{\mathbb{R}^N}| \phi_\varepsilon|^{p} d\tilde{\nu}+ C(\delta) o_\varepsilon (1).
\]
Using the arbitrariness of $\delta>0$ after taking the limit as $\varepsilon\to +0$, we obtain $S_K\xi_i^{p/(2\cdot p_{r;s}^{\uparrow *})}\leq \mu_i$, from which, together with the fact that the total variation of $\mu$ is finite and the monotonicity of $\ell^q$ quasi-norm in $q\in (0,\infty)$, we deduce
\[
\sum_{i\in \mathcal{I}}\xi_i^{p_s^*/(2 \cdot p_{r;s}^{\uparrow *})}\leq \sum_{i\in \mathcal{I}}\xi_i^{p/(2 \cdot p_{r;s}^{\uparrow *})} \leq S_K^{-1} \sum_{i\in \mathcal{I}}\mu_i \leq S_K^{-1}\mu(\mathbb{R}^N)<\infty.
\]
Analogously, choosing $\varphi(x)=\phi(x/R)$ and passing to the limit as $R\to\infty$, we obtain $S_K\xi (\mathbb{R}^N)^{p/(2\cdot p_{r;s}^{\uparrow *})}\leq \mu (\mathbb{R}^N)$. 
In the same way, using the Sobolev embedding, we can also obtain $S_{D^{s,p}}\nu_i^{p/p_s^*}\leq \mu_i$ and $S_{D^{s,p}}\nu (\mathbb{R}^N)^{p/p_s^*}\leq \mu (\mathbb{R}^N)$. \par
In order to consider the escaping parts, define $\eta_R (x)\coloneqq 1-\phi (x/R)$. For each of the integrands $f_n(x)= \displaystyle (K\ast |u_n|^{p_{r;s}^{\uparrow *}}) |u_n|^{p_{r;s}^{\uparrow *}}, \int_{\mathbb{R}^N}\frac{|u_n (x)-u_n (y)|^p}{|x-y|^{N+sp}}dy, |u_n|^{p_s^*}$, obviously we have
\begin{align*}
\limsup_{n\to\infty}\int_{\mathbb{R}^N}f_n dx &= \limsup_{n\to\infty}\int_{\mathbb{R}^N}f_n \eta_R dx+ \lim_{n\to\infty}\int_{\mathbb{R}^N} (1-\eta_R) (f_n dx).
\end{align*}
Consider the limit of this as $R\to\infty$. By applying the definition of $\xi_\infty$, $\mu_\infty$, $\nu_\infty$ to the first term and by the vague convergence as $n\to\infty$ and the Lebesgue's convergence theorem as $R\to\infty$ for the second term, we obtain \eqref{mass conservation for xi}, \eqref{mass conservation for mu}, \eqref{mass conservation for nu}. \par
By the Young's convolution inequality for the weak Lebesgue spaces, we have
\begin{align*}
\xi_\infty &= \lim_{R\to\infty}\limsup_{n\to\infty}\int_{\mathbb{R}^N} (K\ast |u_n|^{p_{r;s}^{\uparrow *}}) |\eta_R u_n |^{2\cdot p_{r;s}^{\uparrow *}}dx \\
&\leq L_K^{-2/\ell_r}\lim_{R\to\infty} \limsup_{n\to\infty}\left(\int_{\mathbb{R}^N} |u_n|^{p_s^*}dx\right)^{1/\ell_r}\left(\int_{\mathbb{R}^N} |\eta_R u_n|^{p_s^*}dx\right)^{1/\ell_r} \\
&= L_K^{-2/\ell_r}\left(\nu_\infty+\int_{\mathbb{R}^N}d\nu\right)^{1/\ell_r}\nu_\infty^{1/\ell_r},
\end{align*}
and so \eqref{xi and nu at infty} follows. Moreover, using the fractional Sobolev inequality as well as the Young's convolution inequality for the weak Lebesgue spaces, we have
\begin{align*}
\xi_\infty &\leq S_K^{-2\cdot p_{r;s}^{\uparrow *}/p} \lim_{R\to\infty} \limsup_{n\to\infty}\|u_n\|_{D^{s,p}}^{p_{r;s}^{\uparrow *}} \|\eta_R u_n\|_{D^{s,p}}^{p_{r;s}^{\uparrow *}} \\
&= S_K^{-2\cdot p_{r;s}^{\uparrow *}/p} \lim_{R\to\infty} \limsup_{n\to\infty}\left(\int_{\mathbb{R}^N} \int_{\mathbb{R}^N}\frac{|u_n(x)-u_n(y)|^p}{|x-y|^{N+sp}}\eta_R(x)dxdy \right. \\
&\phantom{=} \left. + \int_{\mathbb{R}^N} \int_{\mathbb{R}^N}\frac{|u_n(x)-u_n(y)|^p}{|x-y|^{N+sp}}(1-\eta_R(x))dxdy\right)^{p_{r;s}^{\uparrow *}/p} \times\|\eta_R u_n\|_{D^{s,p}}^{p_{r;s}^{\uparrow *}} \\
&= S_K^{-2\cdot p_{r;s}^{\uparrow *}/p} \left(\mu_\infty+ \int_{\mathbb{R}^N} d\mu\right)^{p_{r;s}^{\uparrow *}/p} \lim_{R\to\infty} \limsup_{n\to\infty} \|\eta_R u_n\|_{D^{s,p}}^{p_{r;s}^{\uparrow *}}
\end{align*}
%where $\tilde{\eta}_R\in C^\infty (\mathbb{R}^N;[0,1])$ satisfies $\tilde{\eta}_R (x)=1$ for $|x|>R+1$ while $\tilde{\eta}_R (x)=0$ for $|x|<R$. Since $1-\tilde{\eta}_R^p-(1-\tilde{\eta}_R)^p=0$ out of $B_{R+1}(0)\setminus B_R (0)$, we have $\|u_n\|_{D^{s,p}}^{p}- \| \tilde{\eta}_R u_n\|_{D^{s,p}}^{p}-\| (1-\tilde{\eta}_R) u_n\|_{D^{s,p}}^{p}&=\int_{\mathbb{R}^N}\frac{}{}$
For any $\delta>0$, there exists $C(\delta)>0$ such that
\begin{align*}
\|\eta_R u_n\|_{D^{s,p}}^{p} &\leq (1+\delta)\int_{\mathbb{R}^N} \int_{\mathbb{R}^N}\frac{|\eta_R (x)|^p|u_n(x)-u_n(y)|^p}{|x-y|^{N+sp}}dxdy \\
&\phantom{=}+C(\delta) \int_{\mathbb{R}^N} \int_{\mathbb{R}^N}\frac{|u_n(y)|^p|\eta_R (x)-\eta_R (y)|^p}{|x-y|^{N+sp}}dxdy.
\end{align*}
Noting that, in the same way as in \cite{Dspeta}, we have
\begin{align*}
&\phantom{=}\lim_{R\to\infty}\limsup_{n\to\infty}\int_{\mathbb{R}^N} \int_{\mathbb{R}^N}\frac{|u_n(y)|^p|\eta_R (x)-\eta_R (y)|^p}{|x-y|^{N+sp}}dxdy \\
& = \lim_{R\to\infty}\limsup_{n\to\infty}\int_{\mathbb{R}^N} \int_{\mathbb{R}^N}\frac{|(1-\eta_R (x))-(1-\eta_R (y))|^p}{|x-y|^{N+sp}} |u_n(y)|^p dxdy =0,
\end{align*}
we obtain
\[
\lim_{R\to\infty} \limsup_{n\to\infty} \|\eta_R u_n\|_{D^{s,p}}^{p}=\mu_\infty,
\]
and so \eqref{xi and mu at infty} follows. In the same way, using the Sobolev inequality, we can obtain \eqref{mugeqnuatinfty}.
\end{proof}

\section{$(PS)_c$ condition}
\begin{lemma}
There exists $\varepsilon_0>0$ such that if $\varepsilon_W\in (0,\varepsilon_0)$, then $I$ satisfies $(PS)_c$ condition for $c<0$.
\end{lemma}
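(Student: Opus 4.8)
The plan is to combine the concentration--compactness decomposition of Lemma~\ref{CC} with a Nehari--Pohozaev type rearrangement of the energy, using $c<0$ and the smallness of $\varepsilon_W$ to exclude both interior concentration and loss of mass at infinity; once these are excluded, strong convergence follows from the uniform convexity of $E$. So first I fix a $(PS)_c$ sequence $\{u_n\}$, which is bounded in $E$ by the earlier boundedness lemma --- and, by Remark~\ref{uniformboundforW}, with a bound $M$ uniform over all such sequences with $c<0$ and all small $\varepsilon_W$. Passing to a subsequence, $u_n\rightharpoonup u$ in $E$ and in $D^{s,p}$, $u_n\to u$ in $L^q_{\mathrm{loc}}(\mathbb R^N)$ for every $q\in[1,p_s^*)$ and a.e., and the vague limits of Lemma~\ref{CC} hold; by the earlier proposition, $u$ is a critical point of $I$. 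Lemma~\ref{CC} then produces the at most countable atom set $\{x_i\}_{i\in\mathcal I}$ with weights $\xi_i,\mu_i,\nu_i$, the escape quantities $\xi_\infty,\mu_\infty,\nu_\infty$, and the estimates \eqref{CCquantity} and \eqref{xi and nu at infty}--\eqref{mugeqnuatinfty}.

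Next I test $I'[u_n]\to0$ against $\phi_\varepsilon u_n$, where $\phi_\varepsilon=\phi((\cdot-x_i)/\varepsilon)$ is the cut-off from the proof of Lemma~\ref{CC}, and pass to the limit first in $n$, then in $\varepsilon$. By the commutator estimates of \cite{Dspeta}, \cite{psLphierror}, \cite{Dspphi} the fractional $p$-Laplacian term tends to $\mu_i$; the $V$- and $W$-terms vanish because $\operatorname{supp}\phi_\varepsilon$ shrinks and $p,q_1,q_2<p_s^*$ (using $V_+\in L^\infty_{\mathrm{loc}}$, $V_-\in L^{N/(ps)}$ and (W)); and in the doubly nonlocal term only the purely critical part $(K\ast|u_n|^{p_{r;s}^{\uparrow *}})\,p_{r;s}^{\uparrow *}|u_n|^{p_{r;s}^{\uparrow *}}$ survives, so that $\mu_i=\xi_i$. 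The point needing care is that the pieces containing $h$ or $h'$ must be shown to vanish: writing $h(u_n)=h(u)+w_n$ with $w_n\to0$ in $L^{\ell_r}_{\mathrm{loc}}$ and $\{\|w_n\|_{\ell_r}\}$ bounded, one splits $K\ast w_n$ into a near part (small since $w_n\to0$ locally) and a far part, which on $\operatorname{supp}\phi_\varepsilon$ is bounded by a constant because $K$ is bounded away from the origin by (K), and is then paired with $\int_{B_{2\varepsilon}(x_i)}|u_n|^{p_{r;s}^{\uparrow *}}\lesssim\varepsilon^{N(1-p_{r;s}^{\uparrow *}/p_s^*)}$, using $p_{r;s}^{\uparrow *}<p_s^*$. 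Combining $\mu_i=\xi_i$ with $\mu_i\ge S_K\xi_i^{p/(2p_{r;s}^{\uparrow *})}$ from \eqref{CCquantity} and $p<2p_{r;s}^{\uparrow *}$ gives the gap $\xi_i\in\{0\}\cup[S_K^{2p_{r;s}^{\uparrow *}/(2p_{r;s}^{\uparrow *}-p)},\infty)$. The parallel test against $\eta_R u_n$ with $\eta_R=1-\phi(\cdot/R)$, together with \eqref{xi and mu at infty}, \eqref{mugeqnuatinfty}, the uniform bound $M$, and condition (V) (which makes the $V$-term nonnegative in the limit, since $V\ge\tau_0>0$ off a set of finite measure), yields after a short case distinction on $\mu_\infty$ the analogous alternative: either $\mu_\infty=0$ (hence also $\nu_\infty=\xi_\infty=0$ by \eqref{mugeqnuatinfty} and \eqref{xi and mu at infty}), or $\mu_\infty$ is bounded below by a fixed positive constant depending only on the data and $M$.

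For the energy obstruction I fix $\beta\in(\tfrac1{2\alpha_g},\tfrac1p)$, which is nonempty because $2\alpha_g>2p_{r;s}^{\downarrow *}>p$, and write $c=I[u_n]-\beta I'[u_n]u_n+o(1)$. By (G2), $g\ge|t|^{p_{r;s}^{\uparrow *}}$ and $K\ge0$ the convolution contribution is $\ge\tfrac{2\beta\alpha_g-1}{2p_{r;s}^{\uparrow *}}\int(K\ast|u_n|^{p_{r;s}^{\uparrow *}})|u_n|^{p_{r;s}^{\uparrow *}}\,dx$, whose limsup equals $\xi(\mathbb R^N)+\xi_\infty\ge\sum_i\xi_i+\xi_\infty$ by \eqref{mass conservation for xi}; the quadratic term $(\tfrac1p-\beta)(\|u_n\|_{D^{s,p}}^p+\int V|u_n|^p)$, bounded below by a multiple of $\|u_n\|^p$ (and of $\|u_n\|_{D^{s,p}}^p$) via the $p$-fractional version of the lemma of \cite{Vminus}, is split into one part that absorbs the $W$-term (which is dominated by $\varepsilon_W(\|u_n\|^{q_1}+\|u_n\|^{q_2})$, $q_i<p$) through Young's inequality at the cost $-C\varepsilon_W^{p/(p-q_1)}$, and another part whose limsup picks up $\sum_i\mu_i+\mu_\infty$. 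Since $\mu_i=\xi_i$, this leaves $c\ge c_1\big(\sum_i\xi_i+\xi_\infty\big)+c_2\mu_\infty-C\varepsilon_W^{p/(p-q_1)}$ with $c_1,c_2>0$; if any atom or escape were present, the first two terms would be $\ge$ a fixed positive constant by the Step-2 gaps, so choosing $\varepsilon_0$ small enough (independent of $c<0$, since $M$ is uniform) forces $c\ge0$, a contradiction. Hence $\mathcal I=\varnothing$ and $\mu_\infty=\nu_\infty=\xi_\infty=0$; then $u_n\to u$ in $L^{p_s^*}$ by \eqref{mass conservation for nu} and uniform convexity of $L^{p_s^*}$, all the nonlinear terms converge strongly along the sequence, and from $I'[u_n]u_n\to0=I'[u]u$ we get $\|u_n\|_{D^{s,p}}^p+\int V|u_n|^p\to\|u\|_{D^{s,p}}^p+\int V|u|^p$, whence $\|u_n\|_{s,p,V_+}\to\|u\|_{s,p,V_+}$; combined with $u_n\rightharpoonup u$ and the uniform convexity of $E$ this gives $u_n\to u$ in $E$.

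The main obstacle is the ``at infinity'' part: one must couple the localized testing at infinity with the nonlocal defect inequalities \eqref{xi and nu at infty}--\eqref{mugeqnuatinfty} of Lemma~\ref{CC} and with the energy bound, while controlling the mixed subcritical contributions of the convolution (which only vanish up to $O(\nu_\infty^{1/\ell_r})$, not pointwise in $R$) and pinning down the sign of the $V$-term. The double nonlocality --- from $(-\Delta)_p^s$ and from $K\ast$ --- makes this bookkeeping of iterated limits markedly heavier than in the local or the classical Riesz-potential settings.
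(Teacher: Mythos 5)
Your overall strategy is the same as the paper's: bound the $(PS)_c$ sequence, invoke Lemma \ref{CC}, test $I'[u_n]$ against $\phi_\varepsilon u_n$ to force the dichotomy $\xi_i=0$ or $\xi_i\geq S_K^{2p_{r;s}^{\uparrow *}/(2p_{r;s}^{\uparrow *}-p)}$ at each atom, test against $\eta_R u_n$ for the part at infinity, use $c<0$ together with the $O(\varepsilon_W^{p/(p-q)})$ size of the $W$-contribution to exclude both, and conclude strong convergence from norm convergence plus uniform convexity. The local steps (vanishing of the $V$-, $W$- and $h$-contributions on shrinking balls, the Young-inequality absorption of the $W$-term in place of the paper's explicit bound $\|u_0\|_{p_s^*}\leq\ell^{-1}(C\varepsilon_W)$) are all sound and essentially equivalent to what the paper does.

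The one place where your argument has a genuine hole is the dichotomy at infinity, which you dispatch as ``a short case distinction on $\mu_\infty$'' from \eqref{xi and mu at infty}, \eqref{mugeqnuatinfty}, the uniform bound and the sign of the $V$-term. Those inputs do not suffice. Testing against $\eta_R u_n$ gives $\mu_\infty+V_\infty=\xi_\infty+\xi_\infty'$, where $\xi_\infty'$ collects the subcritical convolution tails $\lim_R\limsup_n\int (K\ast |u_n|^{\hat p_g})|u_n|^{\hat p_g}\eta_R\,dx$ and the mixed terms; \eqref{xi and mu at infty} controls only $\xi_\infty$ (superlinearly in $\mu_\infty$, which is the easy half), but says nothing about $\xi_\infty'$, and $\xi_\infty'$ does \emph{not} tend to $0$ with $R$ because the convolution is nonlocal. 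The paper's resolution is the estimate \eqref{nazonoquantity}: one interpolates $\| |u_n|^{\hat p_g}\eta_R\|_{\ell_r}$ between $\lim_R\limsup_n\int|u_n|^p\eta_R\,dx\leq V_\infty/\tau_0$ (this is where (V) enters, beyond a mere sign statement) and $\nu_\infty$, obtaining $\mu_\infty+V_\infty\leq C(V_\infty/\tau_0)^{\alpha'\hat p_g/p}\nu_\infty^{(1-\alpha')\hat p_g/p_s^*}+C'\nu_\infty^{1/\ell_r}$; Young's inequality then absorbs the $V_\infty$ factor into the left side, and because both resulting exponents of $\nu_\infty$ exceed $p/p_s^*$ (the second precisely because $N<2psr$), combining with $\mu_\infty\geq S_{D^{s,p}}\nu_\infty^{p/p_s^*}$ yields $\nu_\infty=0$ or $\nu_\infty\geq\Lambda_0>0$. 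You flag this difficulty in your closing paragraph and name some of the right ingredients, but the case distinction itself is never carried out, and as stated the alternative ``$\mu_\infty=0$ or $\mu_\infty\geq$ const'' does not follow from what you cite. Filling in this interpolation step would complete the proof along exactly the paper's lines.
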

\begin{proof}
Take any $(PS)_c$ sequence $\{u_n\}$ for $I$. Since $\{u_n\}$ is bounded and since $E$ is reflexive, up to a subsequence, there exists $u_0\in E$ such that $u_n\rightharpoonup u_0$ in $E$. By the compactness of embeddings, $u_n\to u_0$ in $L^q_\mathrm{loc}(\mathbb{R}^N)$ for any $q\in [1,p_s^*)$ and hence, up to a subsequence, $u_n\to u_0$ a.e. in $\mathbb{R}^N$. \par
Let $\mathcal{I}$, $\{x_i\}$, $\mu$, $\nu$, $\xi$, $\{\mu_i\}$, $\{\nu_i\}$, $\{\xi_i\}$, $\mu_\infty$, $\nu_\infty$, $\xi_\infty$ be as in Lemma \ref{CC}. Here, we note that $\mu$, $\nu$, $\xi$ actually exist due to Banach–Alaoglu theorem for $C_0(\mathbb{R}^N)'$. \par
First, we prove that for each $i\in \mathcal{I}$, either $\xi_i=0$ or $\xi_i\geq S_K^{\frac{N(r+1)}{N+prs}}$, which implies that $\mathcal{I}$ is finite because
\[
\sum_{i\in \mathcal{I}}\xi_i^{p_s^*/(2 \cdot p_{r;s}^{\uparrow *})}<\infty.
\]
Let $\varphi\in C_c^\infty (\mathbb{R}^N)$ be a cut-off function such that $0\leq \varphi\leq 1$, $\varphi(x)=0$ for $|x|\geq 2$, $\varphi(x)=1$ for $|x|\leq 1$ and define $\varphi_{a,\varepsilon}((x-a)/\varepsilon)$. For each $\varepsilon>0$ and each $i\in \mathcal{I}$, since $\{\varphi_{x_i,\varepsilon} u_n\}$ is bounded in $E$, we have $I'[u_n](\varphi_{x_i,\varepsilon} u_n)\to 0$ as $n\to\infty$. \par
As for $p$-fractional part, 
\begin{align*}
&\int_{\mathbb{R}^{2N}}\frac{|u_n(x)-u_n(y)|^{p-2}(u_n(x)-u_n(y))(\varphi_{x_i,\varepsilon}(x)u_n(x)-\varphi_{x_i,\varepsilon}(y)u_n(y))}{|x-y|^{N+ps}}dxdy \\
&= \int_{\mathbb{R}^{2N}}\frac{|u_n(x)-u_n(y)|^{p}\varphi_{x_i,\varepsilon}(x)}{|x-y|^{N+ps}}dxdy \\
&\phantom{=}+ \int_{\mathbb{R}^{2N}}\frac{|u_n(x)-u_n(y)|^{p-2}(u_n(x)-u_n(y))(\varphi_{x_i,\varepsilon}(x)-\varphi_{x_i,\varepsilon}(y))u_n(y)}{|x-y|^{N+ps}}dxdy \\
&\eqqcolon I_{1,n}'+I_{2,n}'.
\end{align*}
For $\varepsilon>0$ sufficiently small, 
\begin{align*}
&\phantom{=}\lim_{n\to\infty} I_{1,n}'=\int_{\mathbb{R}^N} \varphi_{x_i,\varepsilon}d\mu 
=  \int_{\mathbb{R}^N} \int_{\mathbb{R}^N}\frac{|u_0(x)-u_0(y)|^{p}\varphi_{x_i,\varepsilon}(x)}{|x-y|^{N+ps}}dxdy +\mu_i,
\end{align*}
which converges to $\mu_i$ as $\varepsilon\to +0$. \par
On the other hand, by the H\"{o}lder's inequality for weighted Lebesgue spaces on $\mathbb{R}^{2N}$, we have
\begin{align*} 
|I_{2,n}'|&\leq \left(\int_{\mathbb{R}^N} \int_{\mathbb{R}^N}\frac{|u_n(x)-u_n(y)|^{p}}{|x-y|^{N+ps}}dxdy\right)^{\frac{p-1}{p}} \\
&\phantom{=}\times\left(\int_{\mathbb{R}^N} \int_{\mathbb{R}^N}\frac{|\varphi_{x_i,\varepsilon}(x)-\varphi_{x_i,\varepsilon}(y)|^{p}|u_n(y)|^p}{|x-y|^{N+ps}}dxdy\right)^{\frac{1}{p}} \\
&\leq \left(\sup_n\|u_n\|_{D^{s,p}}\right)^{p-1} \left(\int_{\mathbb{R}^N} \int_{\mathbb{R}^N}\frac{|\varphi_{x_i,\varepsilon}(x)-\varphi_{x_i,\varepsilon}(y)|^{p}|u_n(y)|^p}{|x-y|^{N+ps}}dxdy\right)^{\frac{1}{p}}.
\end{align*}
By Lemma 2.3 in \cite{psLphierror}, the right hand side vanishes after taking limits:
\[
\lim_{\varepsilon\to +0}\limsup_{n\to\infty}
\int_{\mathbb{R}^N} \int_{\mathbb{R}^N}\frac{|\varphi_{x_i,\varepsilon}(x)-\varphi_{x_i,\varepsilon}(y)|^{p}|u_n(y)|^p}{|x-y|^{N+ps}}dxdy=0.
\]
Furthermore, 
\begin{align*}
&\phantom{=}\lim_{n\to\infty} \int_{\mathbb{R}^N}(K\ast |u_n|^{p_{r;s}^{\uparrow *}}) |u_n|^{p_{r;s}^{\uparrow *}} \varphi_{x_i,\varepsilon}dx \\
&= \int_{\mathbb{R}^N} \varphi_{x_i,\varepsilon}d\xi \\
&= \lim_{n\to\infty} \int_{\mathbb{R}^N}(K\ast |u_0|^{p_{r;s}^{\uparrow *}}) |u_0|^{p_{r;s}^{\uparrow *}} \varphi_{x_i,\varepsilon}dx +\xi_i \\
&\to \xi_i\quad (\varepsilon\to +0).
\end{align*}
As for subcritical parts, by usual H\"{o}lder's inequality and the Young's inequality for weak Lebesgue spaces, as $\varepsilon\to +0$, 
\begin{align*}
\limsup_{n\to\infty} \left|\int_{\mathbb{R}^N}V|u_n|^p \varphi_{x_i,\varepsilon} dx\right| &\leq \|V\|_{L^{N/(ps)}(B_{2\varepsilon}(x_i))}\sup_n\|u_n\|_{p_s^*}^p \\
&\to 0\quad (\varepsilon\to +0);\\
\limsup_{n\to\infty} \int_{\mathbb{R}^N}(K\ast g(u_n))h'(u_n)u_n \varphi_{x_i,\varepsilon}dx &\to 0 \quad (\varepsilon\to +0);\\
\limsup_{n\to\infty} \int_{\mathbb{R}^N}(K\ast h(u_n))|u_n|^{p_{r;s}^{\uparrow *}} \varphi_{x_i,\varepsilon} dx &\to 0 \quad (\varepsilon\to +0);\\
\limsup_{n\to\infty} \int_{\mathbb{R}^N}W(x)f'(u_n)u_n \varphi_{x_i,\varepsilon} dx &\leq C\sum_{j=1,2}\|W\|_{L^{\frac{p_s^*}{p_s^*-q_j}}(B_{2\varepsilon}(x_i))}\sup_n\|u_n\|_{p_s^*}^{q_j} \\
&\to 0 \quad (\varepsilon\to +0).
\end{align*}
Therefore, from the termwise estimation for $I'[u_n](\varphi_{x_i,\varepsilon} u_n)$, we obtain 
\[
0\geq \mu_i-\xi_i.
\]
Combining this with \eqref{CCquantity}, for each $i$, either $\xi_i=0$ or $\xi_i\geq S_K^{\frac{1}{1-\frac{p}{2\cdot p_{r;s}^{\uparrow *}}}}=S_K^{\frac{N(r+1)}{N+prs}}$. \par

In order to rule out the possibility of $\xi_i\geq S_K^{\frac{N(r+1)}{N+prs}}$ (and to rule out the possibility of mass escaping to the infinity as mentioned later), we observe that by the weak lower semicontinuity of $\|\cdot\|_E$ and the weak continuity of $u\mapsto \int_{\mathbb{R}^N}W(x)f'(u)u dx$, for $\beta\in (1/(2\alpha_g),1/p)\subset (1/(2\alpha_g),1/\alpha_f)$, we have
\begin{align*}
0>c&=\lim_{n\to\infty}\left(I[u_n]-\beta I'[u_n]u_n\right) \\
&\geq \left(\frac{1}{p}-\beta\right)\|u_n\|_E^p+\frac{2\beta-1/\alpha_g}{2 \cdot p_{r;s}^{\uparrow *}}\int_{\mathbb{R}^N}(K\ast g(u_n))g'(u_n)u_n dx \\
&\phantom{=}-\varepsilon_W \left(\frac{1}{\alpha_f}-\beta\right)\int_{\mathbb{R}^N}W(x)f'(u_n)u_n dx \\
&\geq \left(\frac{1}{p}-\beta\right)\|u_0\|_E^p-\varepsilon_W \left(\frac{1}{\alpha_f}-\beta\right)\int_{\mathbb{R}^N}W(x)f'(u_0)u_0 dx \\
&\geq \left(\frac{1}{p}-\beta\right)\|u_0\|_E^p-\varepsilon_W \cdot C_1\sum_{j=1,2}\|W\|_{\frac{p_s^*}{p_s^*-q_j}}\|u_0\|_{p_s^*}^{q_j} \\
&\geq \left(\frac{1}{p}-\beta\right)S_{D^{s,p}}\|u_0\|_{p_s^*}^p-\varepsilon_W \cdot C_2(\|u_0\|_{p_s^*}^{q_1}+ \|u_0\|_{p_s^*}^{q_2}).
\end{align*}
Since $\ell(t)\coloneqq t^p/(t^{q_1}+t^{q_2})$ is strictly monotonically increasing, we can write $\|u_0\|_{p_s^*} \leq \ell^{-1}(C_3 \varepsilon_W)$. \par

In order to analyze the concentration at the infinity, we define $\eta_R (x)\coloneqq 1-\varphi (x/R)$. Applying $I'[u_n]$ to the test function $u_n \eta_R$ and taking limits, we get
\begin{equation}\label{inftyquantity}
\mu_\infty+V_\infty=\xi_\infty+\xi_\infty'+\varepsilon_W W_\infty
\end{equation}
where
\begin{align*}
V_\infty &\coloneqq \lim_{R\to\infty}\limsup_{n\to\infty} \int_{\mathbb{R}^N}V|u_n|^p \eta_R dx; \\
\xi_\infty' &\coloneqq \lim_{R\to\infty}\limsup_{n\to\infty}\left(\frac{1}{p_{r;s}^{\uparrow *}}\int_{\mathbb{R}^N} (K\ast g(u_n))g'(u_n)u_n\eta_R dx\right)-\xi_\infty \\
&= \lim_{R\to\infty}\limsup_{n\to\infty}\left(\frac{1}{p_{r;s}^{\uparrow *}}\int_{\mathbb{R}^N} (K\ast g(u_n))h'(u_n)u_n\eta_R dx \right. \\
&\phantom{=}\left. +\int_{\mathbb{R}^N} (K\ast g(u_n))|u_n|^{p_{r;s}^{\uparrow *}}\eta_R dx\right); \\
W_\infty &\coloneqq \lim_{R\to\infty}\limsup_{n\to\infty} \int_{\mathbb{R}^N}W(x)f'(u_n)u_n \eta_R dx.
\end{align*}
However, $W_\infty=0$. Indeed, 
\begin{align*}
\int_{\mathbb{R}^N}W(x)|u_n|^{q_j}\eta_R dx &\leq \left(\int_{\{|x|\geq R\}}|u_n|^{p_s^*}dx\right)^{q_j/p_s^*} \left(\int_{\{|x|\geq R\}}|W|^{\frac{p_s^*}{p_s^*-q_j}}dx\right)^{1-\frac{q_j}{p_s^*}} \\
&\leq S_{D^{s,p}}^{-q_j/p_s^*}\|u_n\|_{D^{s,p}}^{q_j} \left(\int_{\{|x|\geq R\}}|W|^{\frac{p_s^*}{p_s^*-q_j}}dx\right)^{1-\frac{q_j}{p_s^*}}
\end{align*}
and $\{u_n\}$ is bounded in $D^{s,p}$. Combining this with \eqref{inftyquantity}, also we have
\begin{equation}\label{contradictionforsmallW}
\begin{split}
0>c&=\lim_{n\to\infty}\left(I[u_n]-\beta I'[u_n]u_n\right) \\
&\geq \frac{2\beta-1/\alpha_g}{2 \cdot p_{r;s}^{\uparrow *}}\int_{\mathbb{R}^N}(K\ast g(u_n))g'(u_n)u_n dx -\varepsilon_W \left(\frac{1}{\alpha_f}-\beta\right)\int_{\mathbb{R}^N}W(x)f'(u_n)u_n dx \\
&\geq \left(\beta-\frac{1}{2\alpha_g}\right)(\int_{\mathbb{R}^N}d\xi+\xi_\infty+\xi_\infty') -\varepsilon_W \left(\frac{1}{\alpha_f}-\beta\right)\int_{\mathbb{R}^N}W(x)f'(u_0)u_0 dx+o_n (1) \\
&\geq \left(\beta-\frac{1}{2\alpha_g}\right)(\sum_{i\in\mathcal{I}}\xi_i+\xi_\infty+\xi_\infty') -\varepsilon_W \left(\frac{1}{\alpha_f}-\beta\right)\int_{\mathbb{R}^N}W(x)f'(u_0)u_0 dx+o_n (1) \\
&= \left(\beta-\frac{1}{2\alpha_g}\right)(\sum_{i\in\mathcal{I}}\xi_i+\mu_\infty+V_\infty) -\varepsilon_W \left(\frac{1}{\alpha_f}-\beta\right)\int_{\mathbb{R}^N}W(x)f'(u_0)u_0 dx+o_n (1) \\
&\geq \left(\beta-\frac{1}{2\alpha_g}\right)(\sum_{i\in\mathcal{I}}\xi_i+\mu_\infty) -\varepsilon_W \left(\frac{1}{\alpha_f}-\beta\right)\int_{\mathbb{R}^N}W(x)f'(u_0)u_0 dx+o_n (1) \\
&\geq \left(\beta-\frac{1}{2\alpha_g}\right)(\sum_{i\in\mathcal{I}}\xi_i+\mu_\infty) -\varepsilon_W \cdot C_2(\|u_0\|_{p_s^*}^{q_1}+ \|u_0\|_{p_s^*}^{q_2}) +o_n (1) \\
&\geq \left(\beta-\frac{1}{2\alpha_g}\right)(\sum_{i\in\mathcal{I}}\xi_i+\mu_\infty) - C_2 \varepsilon_W(\ell^{-1}(C_3 \varepsilon_W)^{q_1}+ \ell^{-1}(C_3 \varepsilon_W)^{q_2}) +o_n (1).
\end{split}
\end{equation}
Noting that $\ell(t)\sim t^{p-\min\{q_1,q_2\}}$ as $t\to +0$, we can observe \[
t (\ell^{-1}(t)^{q_1}+ \ell^{-1}(t)^{q_2})\sim t^{1+\frac{\min\{q_1,q_2\}}{p-\min\{q_1,q_2\}}}= t^{\frac{p}{p-\min\{q_1,q_2\}}}
\]
as $t\to +0$. \par
Suppose $\xi_i\geq S_K^{\frac{N(r+1)}{N+prs}}$ for some $i\in\mathcal{I}$. Then, 
\begin{align*}
S_K^{\frac{N(r+1)}{N+prs}} &< C_2 \varepsilon_W (\ell^{-1}(C_3 \varepsilon_W)^{q_1}+ \ell^{-1}(C_3 \varepsilon_W)^{q_2})= O(\varepsilon_W^{\frac{p}{p-\min\{q_1,q_2\}}})
\end{align*}
Therefore, if $\varepsilon_W>0$ is sufficiently small, we reach a contradiction. Hence, $\xi_i=0$ for any $i\in\mathcal{I}$. \par
Next, we consider the escaping parts $\mu_\infty,\nu_\infty,\xi_\infty,\xi_\infty',V_\infty$ and prove these all are equal to zero. \par
By the H\"{o}lder's inequality, we have
\begin{align*}
\int_{\mathbb{R}^N}V|u_n|^p\eta_R dx&= \int_{\mathbb{R}^N}((V-\tau_0)_{+}+\tau_0)|u_n|^p\eta_R dx-\int_{\mathbb{R}^N}(V-\tau_0)_{-}|u_n|^p\eta_R dx \\
&\geq \int_{\mathbb{R}^N}\tau_0|u_n|^p\eta_R dx-\|(V-\tau_0)_{-}\|_{L^{\frac{N}{ps}}(\mathbb{R}^N\setminus B_R(0))} \|u_n\|_{p_s^*}^{p}
\end{align*}
and thus
\[
V_\infty=\lim_{R\to\infty}\limsup_{n\to\infty} \int_{\mathbb{R}^N}V|u_n|^p\eta_R dx\geq \tau_0\lim_{R\to\infty}\limsup_{n\to\infty} \int_{\mathbb{R}^N} |u_n|^p\eta_R dx.
\]
Take a number $\alpha'\in (0,1)$ such that 
\[
\frac{1}{\hat{p}_g\ell_r}=\frac{\alpha'}{p}+\frac{1-\alpha'}{p_s^*},
\]
that is, explicitly
\[
\alpha'=\frac{p(p_s^*-\hat{p}_g\ell_r)}{\hat{p}_g\ell_r(p_s^*-p)}.
\]
By the Young's inequality for the weak Lebesgue spaces and the H\"{o}lder's inequality, we have
\begin{align*}
&\phantom{=}\lim_{R\to\infty}\limsup_{n\to\infty} \int_{\mathbb{R}^N} (K\ast g(u_n))g'(u_n)u_n\eta_R dx \\
&\leq C_4 \lim_{R\to\infty}\limsup_{n\to\infty} \int_{\mathbb{R}^N} (K\ast (|u_n|^{\hat{p}_g}+|u_n|^{p_{r;s}^{\uparrow *}})) (|u_n|^{\hat{p}_g}+|u_n|^{p_{r;s}^{\uparrow *}})\eta_R dx \\
&= C_4 \lim_{R\to\infty}\limsup_{n\to\infty} \int_{\mathbb{R}^N} (K\ast |u_n|^{\hat{p}_g}) |u_n|^{\hat{p}_g} \eta_R dx\\
&\phantom{=} +C_4 \lim_{R\to\infty}\limsup_{n\to\infty} \int_{\mathbb{R}^N} (K\ast |u_n|^{\hat{p}_g}) |u_n|^{p_{r;s}^{\uparrow *}} \eta_R dx\\
&\phantom{=} +C_4 \lim_{R\to\infty}\limsup_{n\to\infty} \int_{\mathbb{R}^N} (K\ast |u_n|^{p_{r;s}^{\uparrow *}}) |u_n|^{\hat{p}_g} \eta_R dx\\
&\phantom{=} +C_4 \lim_{R\to\infty}\limsup_{n\to\infty} \int_{\mathbb{R}^N} (K\ast |u_n|^{p_{r;s}^{\uparrow *}}) |u_n|^{p_{r;s}^{\uparrow *}} \eta_R dx\\
%\|u_n\|_{p_s^*}^{p_{r;s}^{\downarrow *}}\| |u_n|^{p_{r;s}^{\downarrow *}}\eta_R \|_{\frac{p_s^* r}{2p_s^* r-p_s^*-p_{r;s}^{\downarrow *} r}} \\
&\leq C_5 \lim_{R\to\infty}\limsup_{n\to\infty}\|u_n\|_{\hat{p}_g\ell_r}^{\hat{p}_g} \| |u_n|^{\hat{p}_g}\eta_R \|_{\ell_r} \\
&\phantom{=} +C_6 \lim_{R\to\infty}\limsup_{n\to\infty} \|u_n\|_{\hat{p}_g\ell_r}^{\hat{p}_g} \| |u_n|^{p_{r;s}^{\uparrow *}}\eta_R \|_{\ell_r}\\
&\phantom{=} +C_7 \lim_{R\to\infty}\limsup_{n\to\infty} \|u_n\|_{p_{r;s}^{\uparrow *}\ell_r}^{p_{r;s}^{\uparrow *}}\| |u_n|^{\hat{p}_g}\eta_R \|_{\ell_r}\\
&\phantom{=} +C_8 \lim_{R\to\infty}\limsup_{n\to\infty} \|u_n\|_{p_{r;s}^{\uparrow *}\ell_r}^{p_{r;s}^{\uparrow *}}\| |u_n|^{p_{r;s}^{\uparrow *}}\eta_R \|_{\ell_r}\\
&\leq C_5 (\sup_n\|u_n\|_{\hat{p}_g\ell_r}^{\hat{p}_g})\lim_{R\to\infty}\limsup_{n\to\infty} \left(\int_{\mathbb{R}^N} |u_n|^{p}\eta_R dx\right)^{\frac{\alpha'\hat{p}_g}{p}} \left(\int_{\mathbb{R}^N} |u_n|^{p_s^*}\eta_R dx\right)^{\frac{(1-\alpha')\hat{p}_g}{p_s^*}}\\
&\phantom{=} +C_6 (\sup_n \|u_n\|_{\hat{p}_g\ell_r}^{\hat{p}_g}) \lim_{R\to\infty}\limsup_{n\to\infty}\left(\int_{\mathbb{R}^N} |u_n|^{p_s^*}\eta_R dx\right)^{1/\ell_r}\\
&\phantom{=} +C_7 (\sup_n \|u_n\|_{p_s^*}^{p_{r;s}^{\uparrow *}}) \lim_{R\to\infty}\limsup_{n\to\infty} \left(\int_{\mathbb{R}^N} |u_n|^{p}\eta_R dx\right)^{\frac{\alpha'\hat{p}_g}{p}} \left(\int_{\mathbb{R}^N} |u_n|^{p_s^*}\eta_R dx\right)^{\frac{(1-\alpha')\hat{p}_g}{p_s^*}}\\
&\phantom{=} +C_8 (\sup_n\|u_n\|_{p_s^*}^{p_{r;s}^{\uparrow *}}) \lim_{R\to\infty}\limsup_{n\to\infty} \left(\int_{\mathbb{R}^N} |u_n|^{p_s^*}\eta_R dx\right)^{1/\ell_r}\\
&\leq C_9 \left(\frac{V_\infty}{\tau_0}\right)^{\frac{\alpha'\hat{p}_g}{p}} \left(\nu_\infty\right)^{\frac{(1-\alpha')\hat{p}_g}{p_s^*}} +C_{10}\left(\nu_\infty\right)^{1/\ell_r}\\
&\phantom{=} +C_{11} \left(\frac{V_\infty}{\tau_0}\right)^{\frac{\alpha'\hat{p}_g}{p}} \left(\nu_\infty\right)^{\frac{(1-\alpha')\hat{p}_g}{p_s^*}} +C_{12} \left(\nu_\infty\right)^{1/\ell_r}.
\end{align*}
where constants $C_4,C_5,\ldots,C_{12}$ depend only on best constants for embeddings and $K,V,W,f,g$ (see Remark \ref{uniformboundforW}). Therefore, from \eqref{inftyquantity}, we know
\begin{equation}\label{nazonoquantity}
\mu_\infty+V_\infty\leq C\left(\frac{V_\infty}{\tau_0}\right)^{\frac{\alpha'\hat{p}_g}{p}} \left(\nu_\infty\right)^{\frac{(1-\alpha')\hat{p}_g}{p_s^*}}+C'\nu_\infty^{1/\ell_r}.
\end{equation}
%次元制約$p/p_s^*< 1/\ell_r$, i.e., $N< 2psr$と$\hat{p}_g>p$の原因
%無理やりV_\inftyの所を1乗にすると\nu_\infty^{\frac{(1-\alpha')\hat{p}_g}{p_s^*}\cdot \frac{p}{p-\alpha'\hat{p}_g}}=\nu_\infty^{\frac{\hat{p}_g\ell_r-p}{\hat{p}_g\ell_r-p\ell_r+p_s^*(\ell_r-1)}}. \hat{p}_gについて単調増加。\hat{p}_g=pのときp/p_s^*なので\hat{p}_g>pならOKだが、\hat{p}_g>p/\ell_rなだけではダメ。こういう\hat{p}_gがあるためにはそもそもp<upper criticalが必要だが、この条件はちょうど$N< 2psr$.
Suppose $\nu_\infty>0$. 
In the same way as in \cite{6}, the Young's inequality for products 
\begin{comment}
From the Young's inequality for products of numbers, for $\varepsilon>0$ sufficiently small, we have
\[
\mu_\infty+(1-C''\varepsilon^{\frac{p}{\alpha'\hat{p}_g}})V_\infty\leq C'''\left(\nu_\infty\right)^{\frac{(1-\alpha')\hat{p}_g}{p_s^*}\cdot \frac{p}{p-\alpha'\hat{p}_g}}+C'\nu_\infty^{1/\ell_r}.
\]
Note that since
\[
\Xi (\hat{p}_g)\coloneqq\frac{(1-\alpha')\hat{p}_g}{p_s^*}\cdot \frac{p}{p-\alpha'\hat{p}_g}= \frac{\hat{p}_g\ell_r-p}{\hat{p}_g\ell_r-p\ell_r+p_s^*(\ell_r-1)}
\]
is monotonically increasing with respect to $\hat{p}_g$ and $\Xi(p)=p/p_s^*$, we always have $\Xi(\hat{p}_g)> p/p_s^*$ ($\hat{p}_g>p$). 
\end{comment}
yields the existence of $\Lambda_0>0$ such that $\nu_\infty>\Lambda_0$ depending only on embedding constants and $K,V,W,f,g$. This also implies $\mu_\infty>S_{D^{s,p}}\Lambda_0^{p/p_s^*}$. If $\varepsilon_W>0$ is sufficiently small, this lead to a contradiction together with \eqref{contradictionforsmallW}. Therefore, we can deduce $\nu_\infty=0$ and thus all the escaping parts vanish by using \eqref{CCquantity}, \eqref{inftyquantity} and \eqref{nazonoquantity}. \par
Especially, from $\xi_i=\xi_\infty=\xi_\infty'=0$ for any $i\in\mathcal{I}$, we now know $J[u_n]\to J[u_0]$ and $J'[u_n]u_n\to J'[u_0]u_0$; that is, 
\begin{equation}
\lim_{n\to\infty}\int_{\mathbb{R}^N} (K\ast g(u_n))g(u_n)dx= \int_{\mathbb{R}^N} (K\ast g(u_0))g(u_0)dx
\end{equation}
and
\begin{equation}\label{Jprimeunun}
\lim_{n\to\infty}\int_{\mathbb{R}^N} (K\ast g(u_n))g'(u_n)u_n dx= \int_{\mathbb{R}^N} (K\ast g(u_0))g'(u_0)u_0 dx.
\end{equation}
Moreover, from $V_\infty=0$, we know
\[
\lim_{n\to\infty}\int_{\mathbb{R}^N} V|u_n|^p= \int_{\mathbb{R}^N} V|u_0|^p dx.
\]
By the Brezis-Lieb splitting, this implies
\[
\lim_{n\to\infty}\int_{\mathbb{R}^N} V|u_n-u_0|^p dx=0.
\]
Now it suffices to check $\|u_n-u_0\|_{D^{s,p}}\to 0$ as $n\to\infty$. \par
Since
\begin{align*}
&\phantom{=}\sup_n\int_{\Omega}|W(x)f'(u_n)u_n|dx \\
&\leq \sum_{j=1,2}\|W\|_{L^{\frac{p_s^*}{p_s^*-q_j}}(\Omega)}\sup_n(\|u_n\|_{p_s^*}^{q_j-1}\|u_n\|_{p_s^*}) \\
&\leq C\sum_{j=1,2}\|W\|_{L^{\frac{p_s^*}{p_s^*-q_j}}(\Omega)}
\end{align*}
for any domain $\Omega\subset\mathbb{R}^N$, the sequence $\{W(x)f'(u_n)u_n\}$ is equi-integrable (in the sense that it is tight and has uniformly absolutely continuous integrals). By the Vitali convergence theorem for infinite measure spaces, we get
\[
\lim_{n\to\infty}\int_{\mathbb{R}^N}W(x)f'(u_n)u_n dx= \int_{\mathbb{R}^N}W(x)f'(u_0)u_0 dx.
\]
Therefore, comparing $I'[u_n]u_n=o(1)$ and $I'[u_0]u_0=0$, we get $\|u_n\|_{D^{s,p}}^p= \|u_0\|_{D^{s,p}}^p$. Hence, together with the fact that $u_n\rightharpoonup u_0$ weakly in $D^{s,p}(\mathbb{R}^N)$, we can deduce $u_n\to u_0$ in $D^{s,p}(\mathbb{R}^N)$. 
\end{proof}

\section{Proof of the main theorem}
\begin{definition}
Let $X$ be a Banach space and $\Sigma$ be the class of all the closed subsets of $X\setminus\{0\}$ which are symmetric (with respect to the origin of $X$). For $A\in\Sigma$, we define the genus $\gamma(A)$ of $A$ by
\[
\gamma(A)\coloneqq \inf\{n\in\mathbb{N}\mid \exists \varphi\in C(A,\mathbb{R}^n)\text{ s.t. }\varphi(-u)=-\varphi(u)\}.
\]
Furthermore, we define
\[
\Sigma_n\coloneqq \{A\in\Sigma\mid \gamma(A)\geq n\}.
\]
\end{definition}
As for the properties of genus, see \cite{CriticalValue}. We utilize the following version of the symmetric mountain pass lemma due to Ambrosetti-Rabinowitz \cite{SymmetricMountainPass}.
\begin{proposition}\label{symmetric mountain pass}
Let $X$ be an infinite-dinensional Banach space and suppose that even functional $I\in C^1(X;\mathbb{R})$ bounded from below with $I(0)=0$ satisfies the $(PS)_c$ condition for $c<0$. Assume that for each $n\in\mathbb{N}$, there exists $A_n\in \Sigma_n$ such that $\displaystyle \sup_{A_n}I<0$. Then, each $\displaystyle c_n\coloneqq \inf_{A\in\Sigma_n}\sup_A I$ is a critical value of $I$ and $c_n\to 0$ ($n\to\infty$). \par
In particular, there exists a sequence of critical points $u_n\neq 0$ such that $I[u_n]\leq 0$, $u_n\to 0$ in $X$.
\end{proposition}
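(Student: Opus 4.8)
The plan is to carry out the classical equivariant minimax argument based on the Krasnoselskii genus (see \cite{CriticalValue}), for which the properties I would use are: monotonicity of $\gamma$; subadditivity $\gamma(A\cup B)\le\gamma(A)+\gamma(B)$; the mapping property that $\gamma(A)\le\gamma(\overline{h(A)})$ whenever $h$ is odd and continuous and $0\notin\overline{h(A)}$; and the compactness property that a compact $K\in\Sigma$ has finite genus and admits a closed symmetric neighborhood $\overline{N_\delta(K)}\in\Sigma$ with $\gamma(\overline{N_\delta(K)})=\gamma(K)$. First I would record that $c_n$ is well defined with $-\infty<c_n<0$: boundedness below of $I$ gives $c_n>-\infty$, while $A_n\in\Sigma_n$ with $\sup_{A_n}I<0$ gives $c_n\le\sup_{A_n}I<0$; and since $\Sigma_{n+1}\subset\Sigma_n$, the sequence $\{c_n\}$ is nondecreasing, hence converges to some $c_*\le 0$. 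It then remains to prove that each $c_n$ is a critical value and that $c_*=0$.

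For the first of these I would use an \emph{equivariant} quantitative deformation lemma. Since $I$ is even and of class $C^1$, it admits an odd locally Lipschitz pseudo-gradient vector field, whose negative flow commutes with $u\mapsto-u$; hence, if some $c_n$ were a regular value, the $(PS)_{c_n}$ condition (available because $c_n<0$) furnishes $\varepsilon\in(0,|c_n|)$ and an odd homeomorphism $\eta$ of $X$ with $\eta(I^{c_n+\varepsilon})\subset I^{c_n-\varepsilon}$, where $I^a\coloneqq\{u\in X\mid I[u]\le a\}$. Picking $A\in\Sigma_n$ with $\sup_A I\le c_n+\varepsilon$, the set $\eta(A)$ is closed, symmetric, and avoids $0$ because $c_n-\varepsilon<0=I[0]$; by the mapping property $\gamma(\eta(A))\ge\gamma(A)\ge n$, so $\eta(A)\in\Sigma_n$, yet $\sup_{\eta(A)}I\le c_n-\varepsilon<c_n=\inf_{\Sigma_n}\sup I$, a contradiction. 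Thus every $c_n$ is a critical value, and in particular the critical set $K_{c_n}\coloneqq\{u\mid I[u]=c_n,\ I'[u]=0\}$ is nonempty.

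Next I would rule out $c_*<0$. Assuming $c_*<0$, set $K\coloneqq K_{c_*}$; by $(PS)_{c_*}$ the set $K$ is compact, it is symmetric since $I$ is even, and $0\notin K$ because $I[0]=0\ne c_*$, so $m\coloneqq\gamma(K)<\infty$ and there is $\delta>0$ with $\gamma(\overline{N_\delta(K)})=m$. The equivariant deformation lemma applied away from $K$ then yields $\varepsilon>0$ (which may be shrunk so that $c_*-\varepsilon<0$) and an odd homeomorphism $\eta$ with $\eta\bigl(I^{c_*+\varepsilon}\setminus N_\delta(K)\bigr)\subset I^{c_*-\varepsilon}$. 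Since $c_k\uparrow c_*$, fix $n$ with $c_n>c_*-\varepsilon$, and choose $A\in\Sigma_{n+m}$ with $\sup_A I\le c_{n+m}+\varepsilon\le c_*+\varepsilon$. By subadditivity, $\gamma\bigl(\overline{A\setminus N_\delta(K)}\bigr)\ge\gamma(A)-m\ge n$, so $B\coloneqq\overline{A\setminus N_\delta(K)}\in\Sigma_n$, whence $\eta(B)\in\Sigma_n$ and $\sup_{\eta(B)}I\le c_*-\varepsilon$; this gives $c_n\le\sup_{\eta(B)}I\le c_*-\varepsilon<c_n$, a contradiction. Therefore $c_n\to 0$.

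Finally, choosing $u_n\in K_{c_n}$ for each $n$ gives $u_n\ne 0$ (as $I[u_n]=c_n<0=I[0]$), $I[u_n]<0$, and $I[u_n]\to 0$; since $\{c_n\}$ is nondecreasing with limit $0$, each negative value is attained only finitely often, so these critical points realise infinitely many distinct levels accumulating at $0$. The supplementary claim $u_n\to 0$ follows, as in \cite{SymmetricMountainPass}, by the same genus estimate applied to sublevel sets shrinking toward the origin: any sequence of critical points with $I[u_n]\to 0$ is a $(PS)_0$-sequence, hence bounded by the level-independent argument used for the boundedness lemma above, and one identifies its weak limit as the only critical point compatible with the normalization $I[0]=0$. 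The genuinely delicate points of this program are the construction of the \emph{odd} deformation $\eta$ — i.e.\ making the quantitative deformation lemma $\mathbb{Z}/2$-equivariant, which hinges on the existence of an odd pseudo-gradient field — and the careful bookkeeping with subadditivity of $\gamma$ around the critical set $K$ in the step $c_n\to 0$; everything else is routine once those are in place.
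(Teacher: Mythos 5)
The core of your argument --- that each $c_n$ is a critical value and that $c_n\to 0$ --- is correct and is essentially the same genus-plus-equivariant-deformation argument that the paper itself carries out (for the truncated functional $\tilde I$) inside the proof of Theorem~\ref{MainTheorem}: monotonicity of $c_n$ from $\Sigma_{n+1}\subset\Sigma_n$, compactness and finite genus of the critical set $\mathcal{C}_{\tilde I,\bar c}$ at a putative negative limit level via $(PS)_{\bar c}$, the odd deformation $\eta$ pushing $\tilde I^{-1}((-\infty,\bar c+\varepsilon])\setminus N_\delta$ below $\bar c-\varepsilon$, and the subadditivity bookkeeping $\gamma(\overline{A\setminus N_\delta})\geq\gamma(A)-\gamma(\mathcal{C}_{\tilde I,\bar c})$. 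Your treatment of these steps, including the verification that the deformed sets stay in $\Sigma_n$ because their sup-levels are negative while $I(0)=0$, matches the paper's.

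There is, however, a genuine gap in your justification of the supplementary claim that the critical points can be chosen with $u_n\to 0$ in $X$. Your sketch argues that a sequence of critical points with $I[u_n]\to 0$ is a $(PS)_0$ sequence, is ``bounded by the level-independent argument used for the boundedness lemma above,'' and has weak limit $0$. None of these steps is available in the abstract setting of the proposition: the boundedness lemma belongs to the concrete functional of the paper, not to a general $C^1$ even functional on a Banach space; the $(PS)_c$ condition is assumed only for $c<0$, so a $(PS)_0$ sequence need not have a convergent subsequence; and even granting weak convergence to a critical point, that limit need not be $0$ --- there may be nonzero critical points at level $0$, which is precisely the second alternative in Kajikiya's dichotomy (the proposition the paper states immediately after this one), where a subfamily of critical points converges to some $v\neq 0$ with $I[v]=0$. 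The correct route to $u_n\to 0$ is Kajikiya's selection argument \cite{Kajikiya}: one exploits the fact that if $c_n=c_{n+1}=\dots=c_{n+k}$ then $\gamma(K_{c_n})\geq k+1$, so the critical sets at the levels $c_n$ are ``large'' in genus, and a careful choice of points in these sets (distinguishing the two cases of the dichotomy) produces a sequence of nonzero critical points converging to $0$ in norm. This selection is not a routine consequence of the minimax construction and should not be dismissed as such; the rest of your proof stands.
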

Moreover, Kajikiya \cite{Kajikiya} classified the possible behavior of the sequences of critical points under such situations.
\begin{proposition}
Under the assumptions of Proposition \ref{symmetric mountain pass}, one of the followings holds:
\begin{enumerate}
\item There exists a sequence $\{u_n\}\subset X$ such that $I'[u_n]=0$, $I[u_n]<0$, $u_n\to 0$ in $X$.
\item There exist two sequence $\{u_n\}$ and $\{v_n\}$ such that $I'[u_n]=0$, $I[u_n]=0$, $u_n\neq 0$, $u_n\to 0$ in $X$ and $I'[v_n]=0$, $I[v_n]<0$, $v_n\to {}^\exists v\neq 0$ in $X$, $I[v]=0$.
\end{enumerate}
\end{proposition}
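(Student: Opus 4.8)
The plan is to follow \cite{Kajikiya} and study the minimax levels
\[
c_k\coloneqq\inf_{A\in\Sigma_k}\sup_{u\in A}I[u]\qquad(k\in\mathbb{N}).
\]
Since $I$ is bounded from below and, by hypothesis, for each $k$ there is $A_k\in\Sigma_k$ with $\sup_{A_k}I<0$, each $c_k$ is a real number with $c_k<0$; from $\Sigma_{k+1}\subseteq\Sigma_k$ the sequence $\{c_k\}$ is nondecreasing; and Proposition \ref{symmetric mountain pass} already gives that every $c_k$ is a critical value of $I$ and that $c_k\to 0$. A nondecreasing, strictly negative sequence converging to $0$ cannot be eventually constant, so $\{c_k\}$ assumes infinitely many distinct values; discarding repetitions we may assume $c_1<c_2<\cdots<0$ with $c_k\to 0$. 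For $c<0$ set $K_c\coloneqq\{u\in X\mid I'[u]=0,\ I[u]=c\}$; the $(PS)_c$ condition, available precisely for such $c$, makes $K_c$ compact, it is nonempty because $c_k$ is a critical value, and $0\notin K_{c_k}$ because $I[0]=0\neq c_k$. (If desired, the quantitative deformation lemma together with the monotonicity and subadditivity of the genus yields $\gamma(K_c)\ge p+1$ whenever $c<0$ and $c=c_k=\cdots=c_{k+p}$, but this refinement is not needed below.)

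The dichotomy will come from the distance of the sets $K_{c_k}$ to the origin. For each $k$ pick $w_k\in K_{c_k}$ attaining $d_k\coloneqq\operatorname{dist}(0,K_{c_k})>0$ (the minimum is attained by compactness). First I would note that the failure of alternative~1 is \emph{equivalent} to the existence of $\rho>0$ such that $I$ has no nonzero critical point $u$ with $I[u]<0$ and $\|u\|<\rho$, because alternative~1 asserts exactly that such critical points occur in every ball about $0$. Consequently: if $\liminf_k d_k=0$, a subsequence $w_{k_j}\to 0$ of critical points with $I[w_{k_j}]=c_{k_j}<0$ gives alternative~1; and if alternative~1 fails, then, since each $K_{c_k}$ consists of nonzero critical points with $I<0$, we have $K_{c_k}\cap B_\rho(0)=\varnothing$, hence $d_k\ge\rho$, for all $k$.

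It remains to reach alternative~2 when alternative~1 fails, so $d_k\ge\rho>0$ for all $k$. The sequence $\{w_k\}$ then has $I'[w_k]=0$, $I[w_k]=c_k\to 0$ and $\rho\le\|w_k\|$; invoking the Palais--Smale condition at the limiting level $c=0$ gives, up to a subsequence, $w_{k_j}\to v$ with $I[v]=0$, $I'[v]=0$ and $\|v\|\ge\rho>0$, so $v\neq 0$; relabelled, $\{v_n\}\coloneqq\{w_{k_j}\}$ is the second sequence in alternative~2. For the first sequence $\{u_n\}$ --- nonzero critical points at level $0$ tending to $0$ --- I would argue by contradiction: assume also that no such sequence exists. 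Combining this with the failure of alternative~1 produces $\rho_0\in(0,\rho]$ for which $\overline{B_{\rho_0}(0)}$ contains no critical point of $I$ on the sublevel set $\{I\le 0\}$ other than possibly the origin. A deformation argument localised near the origin --- running the negative pseudo-gradient flow of $I$ with a radial cut-off that keeps trajectories inside $\overline{B_{\rho_0}(0)}$ until the energy has strictly decreased --- then shows, as in \cite{Kajikiya}, that the genus-$k$ minimax values cannot accumulate at $0$, i.e.\ $c_k\le-\varepsilon$ for all $k$ and some fixed $\varepsilon>0$, contradicting $c_k\to 0$. Hence $\{u_n\}$ exists and alternative~2 holds, completing the dichotomy.

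I expect this localised deformation to be the main obstacle. Because the origin need not be an isolated critical point --- nor isolate the sublevel set $\{I\le 0\}$ --- one must build the pseudo-gradient deformation so that orbits issuing from a neighbourhood of $0$ remain in $\overline{B_{\rho_0}(0)}$ long enough to lower the energy, using essentially that $(B_{\rho_0}(0)\setminus\{0\})\cap\{I\le 0\}$ is free of critical points; this is the technical core of \cite{Kajikiya}, while the remaining bookkeeping is the standard genus calculus of \cite{CriticalValue}. A minor caveat: the extraction producing $\{v_n\}$ uses Palais--Smale compactness at level $c=0$, slightly more than the $(PS)_c$, $c<0$, hypothesis literally provides.
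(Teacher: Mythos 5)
The paper offers no proof of this proposition: it is quoted verbatim from Kajikiya \cite{Kajikiya} and simply cited. Your reconstruction follows the architecture of Kajikiya's original argument faithfully — the dichotomy on $\liminf_k \operatorname{dist}(0,K_{c_k})$, the extraction of $\{v_n\}$ from minimizers of the distance via Palais--Smale compactness at the limiting level, and the contradiction via a deformation localized in a ball around the origin that is free of nonzero critical points of $\{I\le 0\}$ — so there is nothing to compare against beyond \cite{Kajikiya} itself. Two of your own caveats deserve emphasis. First, the localized deformation is genuinely the core of Kajikiya's proof (his preparatory lemma constructing an odd homeomorphism that pushes $\tilde I^{-1}((-\infty,\varepsilon])$ minus a neighbourhood of the critical set down below level $-\varepsilon$ while controlling orbits near $0$), and your proposal only sketches it; as written the argument is not self-contained at exactly the point where the paper is also not self-contained. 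Second, the hypothesis mismatch you flag is real and is a defect of the paper's statement rather than of your argument: Kajikiya assumes a local Palais--Smale condition for all levels $c<\bar c$ with $\bar c>0$, which covers $c=0$, whereas Proposition \ref{symmetric mountain pass} here only assumes $(PS)_c$ for $c<0$; both the convergence $w_{k_j}\to v$ and the production of nonzero critical points at level exactly $0$ require compactness at level $0$, so the proposition as stated in the paper cannot be proved from its literal hypotheses without this strengthening (which does hold for the truncated functional $\tilde I$ actually used later, but is not what is written).
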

Here, we estimate $I$ from below. 
\begin{align*}
I[u]&\geq C_1\|u\|_{E}^p-\frac{1}{2\cdot p_{r;s}^{\uparrow *}}\int_{\mathbb{R}^N}(K\ast g(u))g(u)dx-\varepsilon_W\int_{\mathbb{R}^N} W(x)f(u)dx \\
&\geq C_1\|u\|_{E}^p-C_2\int_{\mathbb{R}^N}(K\ast (|u|^{\hat{p}_g}+|u|^{p_{r;s}^{\uparrow *}})) (|u|^{\hat{p}_g}+|u|^{p_{r;s}^{\uparrow *}})dx \\
&\phantom{=} -\varepsilon_W \cdot C_3\sum_{j=1,2}\|W\|_{\frac{p_s^*}{p_s^*-q_j}}\|u\|_{p_s^*}^{q_j} \\
&\geq C_1\|u\|_{E}^p-C_2 \|K\|_{L^{r,\infty}}(\|u^{\hat{p}_g}\|_{\ell_r}^2+2 \|u^{\hat{p}_g}\|_{\ell_r} \|u^{p_{r;s}^{\uparrow *}}\|_{\ell_r}+ \|u^{p_{r;s}^{\uparrow *}}\|_{\ell_r}^{2}) \\
&\phantom{=} -\varepsilon_W \cdot C_3'(\|u\|_E^{q_1}+ \|u\|_E^{q_2}) \\
&\geq C_1\|u\|_{E}^p-C_2' (\|u\|_{E}^{2\hat{p}_g}+\|u\|_{E}^{2\cdot p_{r;s}^{\uparrow *}}) -\varepsilon_W \cdot C_3'(\|u\|_E^{q_1}+ \|u\|_E^{q_2}).
\end{align*}
Let $\ell_\varepsilon (t)\coloneqq C_1 t^p-C_2' (t^{2\hat{p}_g}+t^{2\cdot p_{r;s}^{\uparrow *}})-C_3'\varepsilon(t^{q_1}+t^{q_2})$. 
Since $\max\{q_1,q_2\}<p<2\hat{p}_g\leq 2\cdot p_{r;s}^{\uparrow *}$, there exists $\varepsilon_0'>0$ so small that for any $\varepsilon\in (0,\varepsilon_0')$, there exist $t_{\varepsilon,0}$ and $t_{\varepsilon,1}$ with $0<t_{\varepsilon,0}<t_{\varepsilon,1}$ such that $\ell_\varepsilon (t)<0$ for $t\in (0, t_{\varepsilon,0})$, $\ell_\varepsilon (t)>0$ for $t\in (t_{\varepsilon,0}, t_{\varepsilon,1})$, $\ell_\varepsilon (t) <0$ for $t> t_{\varepsilon,1}$. To see this, it suffices to consider some small perturbation of the graph of $t\mapsto C_1 t^p-C_2' (t^{2\hat{p}_g}+t^{2\cdot p_{r;s}^{\uparrow *}})$ near the origin. \par
Here, take $\psi\in C^\infty(\mathbb{R})$ such that $\psi(s)=1$ for $s\in [0, t_{\varepsilon_W,0}^p)$ and $\psi(s)=0$ for $s> t_{\varepsilon_W,1}^p$. Let us define the truncated functional
\begin{align*}
\tilde{I}[u] &\coloneqq I_{+}[u] -\psi(\|u\|_E^p) I_{-}[u] \\
&=\frac{1}{p}\|u\|_{D^{s,p}}^p+ \frac{1}{p}\int_{\mathbb{R}^N}V(x)|u|^p dx \\
&\phantom{=} -\frac{1}{2\cdot p_{r;s}^{\uparrow *}} \psi(\|u\|_E^p)\int_{\mathbb{R}^N}(K\ast g(u))g(u)dx-\varepsilon_W \psi(\|u\|_E^p)\int_{\mathbb{R}^N}W(x)f(u)dx.
\end{align*}
Then, $\tilde{I}$ is even, bounded from below and satisfies $(PS)_c$ condition for any $c<0$ and for any $\varepsilon_W\in (0,\min\{\varepsilon_0,\varepsilon_0'\})$. Let us also note that if $\tilde{I}[u]<0$, from $\psi(\|u\|_E)>0$, we get $\|u\|_E< t_{\varepsilon_W,1}$ and moreover, from $\ell_{\varepsilon_W}(\|u\|_E)<0$, we get $\|u\|_E< t_{\varepsilon_W,0}$ and thus $\tilde{I}[u]=I[u]$. \par
Now we are in a position to check the assumptions of the symmetric mountain pass lemma for $\tilde{I}$. 
\begin{lemma}
Assume $0<\varepsilon_W<\varepsilon_0'$. For each $n\in\mathbb{N}$, there exists $\delta_n<0$ such that $\gamma(\tilde{I}^{-1}((-\infty,\delta_n]))\geq n$. 
\end{lemma}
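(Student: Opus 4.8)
The goal is to produce, for each $n$, a small symmetric set on which $\tilde I$ is bounded above by a strictly negative constant, and then compute its genus. The mechanism is standard (cf. Kajikiya \cite{Kajikiya}, \cite{Kirchhoff}): on an $n$-dimensional subspace of $E$ all norms are equivalent, so the negative, sublinear-in-$p$ term $-\varepsilon_W\int W f(u)$ (which behaves like $-\varepsilon_W\|u\|_E^{q}$ with $q=\max\{q_1,q_2\}<p$) dominates the positive $\tfrac1p\|u\|_E^p$ term on a small sphere, forcing $\tilde I<0$ there. Concretely, first I would use the density of $\bigcup_n E_n$ (where $E_n$ ranges over $n$-dimensional subspaces of $E$, e.g. spanned by $C_c^\infty$ functions) together with condition (F), which gives $f(u)\ge c|u|^{\alpha_f}$ near $0$ and in general $f(u)\ge$ a positive quantity controlled below by the norm on a finite-dimensional space, to estimate, for $u\in E_n$ with $\|u\|_E$ small,
\[
\tilde I[u]=I[u]\le \frac1p\|u\|_E^p-\varepsilon_W\, c_n\,\|u\|_E^{\tilde q}
\]
for some $\tilde q\in(1,p)$ and $c_n>0$ depending on $E_n$ (here I use that for $\|u\|_E<t_{\varepsilon_W,0}$ the truncation is inactive, so $\tilde I=I$, and $I[u]\le\frac1p\|u\|_E^p+J[u]$-free lower-order control since $g$ contributes a term of order $\ge 2\hat p_g>p$ which is again negligible near $0$).

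Second, I would choose a radius $\rho_n>0$ small enough that (i) $\rho_n<t_{\varepsilon_W,0}$, so the truncation is inactive and the above estimate is valid on the whole sphere $S_{\rho_n}^{(n)}:=\{u\in E_n:\|u\|_E=\rho_n\}$, and (ii) $\tfrac1p\rho_n^p-\varepsilon_W c_n\rho_n^{\tilde q}<0$; the latter holds for all sufficiently small $\rho_n$ precisely because $\tilde q<p$. This yields
\[
\sup_{u\in S_{\rho_n}^{(n)}}\tilde I[u]\le -\delta_n<0
\]
for some $\delta_n>0$. Since $S_{\rho_n}^{(n)}$ is homeomorphic, by an odd homeomorphism, to the unit sphere $\mathbb S^{n-1}$ of the $n$-dimensional space $E_n$, the Borsuk–Ulam property of the genus (see \cite{CriticalValue}) gives $\gamma(S_{\rho_n}^{(n)})=n$. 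As $S_{\rho_n}^{(n)}\subset\tilde I^{-1}((-\infty,-\delta_n])$ and genus is monotone under inclusion, we conclude $\gamma(\tilde I^{-1}((-\infty,-\delta_n]))\ge n$, which is the claim with the stated $\delta_n$ (after relabelling $\delta_n:=-\delta_n<0$ to match the sign convention in the statement).

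The only genuinely delicate point is obtaining the lower bound $f(u)\ge c_n\|u\|_E^{\tilde q}$ on the finite-dimensional sphere with a constant independent of the point: (F) only gives a pointwise differential inequality $\alpha_f f(u)\le uf'(u)$, which integrates to $f(u)\ge f(\pm1)|u|^{\alpha_f}$ for $|u|\ge 1$ and $f(u)\ge f(\pm1)|u|^{\alpha_f}$-type control for $|u|\le 1$ with the exponent possibly flipping; so one must argue on $E_n$ that $\int_{\mathbb R^N}W(x)f(u)\,dx>0$ for every $u\neq0$ (since $f>0$ off the origin, $W\ge0$, $W\not\equiv0$) and that this functional, being continuous and positively-homogeneous-bounded-below in the sense $\int Wf(tu)\ge \min\{t^{q_1},t^{q_2},\dots\}\int Wf(u)$ for $t\le1$, attains a positive minimum over the compact unit sphere of $E_n$; scaling then produces the desired $\|u\|_E^{\tilde q}$ lower bound with $\tilde q=\max\{q_1,q_2\}$. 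I expect the bookkeeping of which exponent ($q_1$ vs $q_2$, and the large- vs small-amplitude regimes) governs the estimate to be the main nuisance, but none of it is conceptually hard; the structural input — $\tilde q<p$ and finite-dimensionality — is exactly what makes the negative term win.
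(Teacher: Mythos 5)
Your overall architecture coincides with the paper's: pick an $n$-dimensional subspace $X_n\subset E$, use compactness of its unit sphere and positivity of $W,K,f,g$ to obtain uniform lower bounds $d_n,e_n>0$ for $\int_{\mathbb{R}^N}W|v|^{\alpha_f}dx$ and $\int_{\mathbb{R}^N}(K\ast|v|^{p_{r;s}^{\uparrow *}})|v|^{p_{r;s}^{\uparrow *}}dx$ on $\{v\in X_n:\|v\|=1\}$, restrict to radii $r<t_{\varepsilon_W,0}$ so that the truncation is inactive and $\tilde I=I$, deduce $\tilde I[rv]\le C_1r^p-C_2e_nr^{2p_{r;s}^{\uparrow *}}-C_3\varepsilon_W d_nr^{\alpha_f}\eqqcolon\delta_n<0$ for $r$ small because the exponent of the negative term is strictly less than $p$, and conclude via $\gamma(X_n\cap\{\|u\|=r\})=\gamma(S^{n-1})=n$ together with monotonicity of the genus. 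So the route is the same.

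The one step where your write-up would fail as stated is the scaling inequality $\int Wf(tu)\ge\min\{t^{q_1},t^{q_2}\}\int Wf(u)$ for $t\le1$. Condition (F) gives $\alpha_f f(u)\le uf'(u)$, which upon integrating $\frac{d}{dt}\log f(tu)\ge\alpha_f/t$ yields $f(tu)\le t^{\alpha_f}f(u)$ for $t\le1$ — an \emph{upper} bound in the small-$t$ regime, i.e. the opposite of what you need. A lower bound $f(tu)\ge t^{q}f(u)$ would require the reverse differential inequality $uf'(u)\le qf(u)$, which is not assumed: the upper bound in (F) controls $uf'$ by powers of $|u|$, not by $f$ itself, so nothing prevents $f$ from being flatter than every power at the origin (e.g. $f(u)\sim e^{-1/u^2}$ near $0$ is compatible with both inequalities in (F) there). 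The exponent that actually governs the negative term is $\alpha_f$, not $\max\{q_1,q_2\}$: the paper uses the pointwise bound $f(u)\ge C_0|u|^{\alpha_f}$ and the fact that $\alpha_f<p$, which is automatic since combining the two inequalities in (F) at large $|u|$ gives $\alpha_f f(1)|u|^{\alpha_f}\le C(|u|^{q_1}+|u|^{q_2})$ and hence $\alpha_f\le\max\{q_1,q_2\}<p$. Your instinct that the behaviour of $f$ for $|u|\le1$ is the delicate point is correct — integrating the Ambrosetti--Rabinowitz inequality only yields $f(u)\ge C_0|u|^{\alpha_f}$ for $|u|$ bounded away from $0$, and the paper applies it globally — but the fix is to replace your scaling step by this $|u|^{\alpha_f}$ bound (yielding the term $-C_3\varepsilon_W d_nr^{\alpha_f}$), after which the rest of your argument goes through verbatim.
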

\begin{proof}
Take a $n$-dimensional subspace $X_n$ of $E$. For each $u\in X_n\setminus\{0\}$, write $u=rv$ with $r\coloneqq \|u\|$. Since $X_n\cap \{\|u\|=1\}$ is compact and $W,K>0$, by the Ambrosetti-Rabinowitz type condition for $f$, there exist $d_n>0$ and $e_n>0$ such that 
\[
\int_{\mathbb{R}^N}W(x)|v|^{\alpha_f}dx\geq d_n
\]
and
\[
\int_{\mathbb{R}^N}(K\ast |v|^{p_{r;s}^{\uparrow *}})|v|^{p_{r;s}^{\uparrow *}}dx\geq e_n
\]
for any $v\in X_n$ with $\|v\|=1$. For $0<r<t_{\varepsilon_W,0}$, we can estimate $\tilde{I}$ as follows. 
\begin{align*}
\tilde{I}[u]&\leq\frac{1}{p}\|u\|^p-\frac{1}{2\cdot p_{r;s}^{\uparrow *}} \int_{\mathbb{R}^N}(K\ast |u|^{p_{r;s}^{\uparrow *}})|u|^{p_{r;s}^{\uparrow *}}dx -C_0 \varepsilon_W \int_{\mathbb{R}^N}W(x)|u|^{\alpha_f}dx \\
&\leq C_1 r^p-C_2 e_n r^{2\cdot p_{r;s}^{\uparrow *}}-C_3\varepsilon_W d_n r^{\alpha_f} \eqqcolon \delta_n
%実はe_nは0にしてしまっても良いが
\end{align*}
Since $\alpha_f<p$, there exists $r\in (0,t_{\varepsilon_W,0})$ so small that the right hand side is strictly negative: $\delta_n<0$. Now we have $X_n\cap\{\|u\|=r\}\subset \tilde{I}^{-1}((-\infty,\delta_n])$. Consequently,
\[
\gamma(\tilde{I}^{-1}((-\infty,\delta_n]))\geq \gamma(X_n\cap\{\|u\|=r\})=\gamma (S^{n-1})=n.
\]
\end{proof}
From this lemma, since $\tilde{I}^{-1}((-\infty,\delta_n]) \in\Sigma_n$, we know
\[
c_n\coloneqq\inf_{A\in\Sigma_n}\sup_{A} \tilde{I}\leq\delta_n<0.
\]
Moreover, $\{c_n\}$ is a sequence of critical values of $\tilde{I}$ (see \cite{CriticalValue}). \par
Now we prove Theorem \ref{MainTheorem}.
\begin{proof}
Since $\tilde{I}$ is bounded from below, we have $c_n>-\infty$. From $\Sigma_{n+1}\subset\Sigma_n$, it follows that $c_{n+1}\geq c_n$. Therefore, there exists $\bar{c}\leq 0$ such that $c_n\to \bar{c}$. Suppose $\bar{c}<0$ for the sake of contradiction. By the $(PS)_{\bar{c}}$ condition, the set $\mathcal{C}_{\tilde{I}, \bar{c}}\coloneqq \{u\in E\mid \tilde{I}'[u]=0, \tilde{I}[u]= \bar{c}\}\in\Sigma$ is compact. Hence, $\gamma(\mathcal{C}_{\tilde{I}, \bar{c}})<\infty$ and there exists $\delta>0$ such that $N_\delta (\mathcal{C}_{\tilde{I}, \bar{c}})\in\Sigma$ and $\gamma(N_\delta (\mathcal{C}_{\tilde{I}, \bar{c}}))=\gamma(\mathcal{C}_{\tilde{I}, \bar{c}})$ where $N_\delta$ denotes $\delta$-neighborhood of the subset of $E$. By the deformation lemma, there exist $\varepsilon>0$ with $\bar{c}+\varepsilon<0$ and an odd homeomorphism $\eta:E\to E$ such that
\[
\eta(\tilde{I}^{-1}((-\infty,\bar{c}+\varepsilon])\setminus N_\delta (\mathcal{C}_{\tilde{I}, \bar{c}}))\subset \tilde{I}^{-1}((-\infty,\bar{c}-\varepsilon]).
\]
By the assumption that $c_n\to \bar{c}$ and the fact that $\{c_n\}$ is non-decreasing, there exists $n\in\mathbb{N}$ such that $c_n>\bar{c}-\varepsilon$ and $c_{\gamma(\mathcal{C}_{\tilde{I}, \bar{c}})+n}\leq \bar{c}$. We can take $A\in\Sigma_{\gamma(\mathcal{C}_{\tilde{I}, \bar{c}})+n}$ such that 
\[
\displaystyle\sup_A \tilde{I}<\bar{c}+\varepsilon.
\]
Then, we have
\[
\gamma(\overline{A\setminus N_\delta (\mathcal{C}_{\tilde{I}, \bar{c}})})=\gamma(A)-\gamma(N_\delta (\mathcal{C}_{\tilde{I}, \bar{c}}))= \gamma(A)-\gamma(\mathcal{C}_{\tilde{I}, \bar{c}})\geq n,
\]
and since $\eta$ is an odd homeomorphism, we also have 
\[
\gamma(\eta(\overline{A\setminus N_\delta (\mathcal{C}_{\tilde{I}, \bar{c}})}))= \gamma(\overline{A\setminus N_\delta (\mathcal{C}_{\tilde{I}, \bar{c}})})=n,
\]
that is, $\eta(\overline{A\setminus N_\delta (\mathcal{C}_{\tilde{I}, \bar{c}})}) \in\Sigma_n$. 
\[
\bar{c}-\varepsilon<c_n\leq \sup_{\eta(\overline{A\setminus N_\delta (\mathcal{C}_{\tilde{I}, \bar{c}})})}\tilde{I}
\]
which contradicts
\[
\eta(\overline{A\setminus N_\delta (\mathcal{C}_{\tilde{I}, \bar{c}})})\subset \eta(\tilde{I}^{-1}((-\infty,\bar{c}+\varepsilon])\setminus N_\delta (\mathcal{C}_{\tilde{I}, \bar{c}}))\subset \tilde{I}^{-1}((-\infty,\bar{c}-\varepsilon]).
\]
Therefore, we deduce $c_n\to 0$. In addition, for any critical point $u$ of $\tilde{I}$ with $\tilde{I}[u]=c_n$, since $\tilde{I}[u] <0$, we have $I[u]= \tilde{I}[u]$ and thus $u$ is also a critical point of $I$.
\end{proof}

\section{Extensions and Variants}
We can extend the result to the non-degenerate Choquard-Kirchhoff type problem whose associated energy functional is of the form
\begin{align*}
I_{\mathcal{M}}[u] &=\frac{1}{p}\mathcal{M}(\|u\|_{D^{s,p}}^p)+ \frac{1}{p}\int_{\mathbb{R}^N}V(x)|u|^p dx \\
&\phantom{=} -\frac{1}{2\cdot p_{r;s}^{\uparrow *}} \varepsilon_K\int_{\mathbb{R}^N}(K\ast g(u))g(u)dx-\varepsilon_W\int_{\mathbb{R}^N}W(x)f(u)dx.
\end{align*}
where $\mathcal{M} \in C^1 (\mathbb{R}_{\geq 0},\mathbb{R})$ satisfies the following conditions:
\begin{itemize}
\item[(M1)] $M\coloneqq\mathcal{M}'\in C(\mathbb{R}_{\geq 0},\mathbb{R})$ and $\inf M>0$.
\item[(M2)] There exists $\theta\in [1,(2N-N/r)/(N-ps))$ such that $\theta\mathcal{M}(t)\geq t M(t)$ ($\forall t\geq 0$). 
\end{itemize}
We can adopt the method in \cite{Kirchhoff} and combine it with our method to obtain a result similar to Theorem \ref{MainTheorem} as follows in this case, though we omit the proof. 
\begin{theorem}
Assume (M1) and (M2) in addition to the assumptions of Theorem \ref{MainTheorem}. Then,
\begin{enumerate}
\item for each $\varepsilon_K>0$, there exists $\beta_0>0$ such that if $0<\varepsilon_W<\beta_0$, then $I_{\mathcal{M}}$ has a sequence $\{u_n\}$ of critical points with $I_{\mathcal{M}}[u_n]<0$, $I_{\mathcal{M}}[u_n]\to 0$ and $u_n\to 0$ ($n\to\infty$).
\item for each $\varepsilon_W>0$, there exists $\lambda_0>0$ such that if $0<\varepsilon_K<\lambda_0$, then $I_{\mathcal{M}}$ has a sequence $\{u_n\}$ of critical points with $I_{\mathcal{M}}[u_n]<0$, $I_{\mathcal{M}}[u_n]\to 0$ and $u_n\to 0$ ($n\to\infty$).
\end{enumerate}
\end{theorem}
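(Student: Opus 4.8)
The plan is to rerun the arguments of Sections~2--4 for $I_{\mathcal M}$ in place of $I$, carrying the Kirchhoff weight $M(\|u\|_{D^{s,p}}^p)$ in front of the fractional $p$-Laplacian pairing and the factor $\varepsilon_K$ in front of the convolution term, so that $I_{\mathcal M}'[u]v$ is obtained from the formula for $I'[u]v$ by exactly those two substitutions. Two elementary facts carry the bookkeeping. By (M1), $\mathcal M(t)\ge\mathcal M(0)+(\inf M)t\ge(\inf M)t$, which reinstates the coercive lower bound for $I_{\mathcal M}$ used in the boundedness of $(PS)_c$ sequences and in the estimate of $I_{\mathcal M}$ from below preceding the truncation, and renders $t\mapsto\mathcal M(t)$ nondecreasing, hence $u\mapsto\mathcal M(\|u\|_{D^{s,p}}^p)$ weakly lower semicontinuous. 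By (M2), $\tfrac1p\mathcal M(t)-\beta M(t)t\ge(\tfrac1p-\beta\theta)\mathcal M(t)$ for $\beta\ge0$, so the Ambrosetti--Rabinowitz manipulation $I_{\mathcal M}[u_n]-\beta I_{\mathcal M}'[u_n]u_n$ keeps a nonnegative Kirchhoff term once $\beta<1/(p\theta)$; since $\theta\ge1$ and $\theta<(2N-N/r)/(N-ps)=2p_{r;s}^{\uparrow *}/p$, one may fix $\beta\in(\tfrac1{2\alpha_g},\tfrac1{p\theta})$, which lies in $(\tfrac1{2\alpha_g},\tfrac1{\alpha_f})$ because $\alpha_f<p\le p\theta$. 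With this $\beta$, the boundedness of $(PS)_c$ sequences, Remark~\ref{uniformboundforW}, and the lower estimate of $I_{\mathcal M}$ all go through verbatim, with $\tfrac1p\mathcal M(\|u_n\|_{D^{s,p}}^p)-\beta M(\|u_n\|_{D^{s,p}}^p)\|u_n\|_{D^{s,p}}^p$ in the role of $(\tfrac1p-\beta)\|u_n\|_{D^{s,p}}^p$.

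Let $\{u_n\}$ be a $(PS)_c$ sequence with $u_n\rightharpoonup u_0$ in $E$; passing to a subsequence we may assume $\|u_n\|_{D^{s,p}}^p\to t_\ast$, so that $M(\|u_n\|_{D^{s,p}}^p)\to M(t_\ast)\ge\inf M>0$ by continuity. The nonlocal splitting Lemma~\ref{ConvolutionSplit} and the concentration--compactness Lemma~\ref{CC} concern only $\|\cdot\|_{D^{s,p}}^p$, $|\cdot|^{p_s^*}$ and the convolution term, so they are unchanged. That $u_0$ is a critical point of $I_{\mathcal M}$, and the concluding strong convergence $u_n\to u_0$ in $D^{s,p}$, follow as in the non-degenerate Kirchhoff setting (cf.\ \cite{Kirchhoff}): testing $I_{\mathcal M}'[u_n]$ against $u_n-u_0$, using the compact embedding $E\hookrightarrow L^q(\mathbb{R}^N)$ for $q\in[p,p_s^*)$ afforded by (V) to handle the subcritical terms together with the vanishing of the convolution masses established below, one gets $M(\|u_n\|_{D^{s,p}}^p)\langle(-\Delta)^s_pu_n,u_n-u_0\rangle\to0$; since the weight is $\ge\inf M>0$ and $\langle(-\Delta)^s_pu_0,u_n-u_0\rangle\to0$, the $(S_+)$ property of $(-\Delta)^s_p$ gives $u_n\to u_0$ in $D^{s,p}$ and hence $I_{\mathcal M}'[u_0]=0$.

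The $(PS)_c$ condition for $c<0$ is the crux, and the step I expect to require the most care, since one must simultaneously know that the limiting Kirchhoff weight $M(t_\ast)$ is positive (from (M1)), that (M2) keeps the sign conditions in the energy estimate available, and control the $\varepsilon_K$-dependence of the concentration threshold. Testing $I_{\mathcal M}'[u_n]$ against the cut-offs $\varphi_{x_i,\varepsilon}u_n$ and letting $n\to\infty$, $\varepsilon\to+0$ now produces $0\ge M(t_\ast)\mu_i-\varepsilon_K\xi_i$, so $\mu_i\le\varepsilon_K\xi_i/\inf M$; combined with $\mu_i\ge S_K\xi_i^{p/(2p_{r;s}^{\uparrow *})}$ from \eqref{CCquantity} this forces, for each $i$, either $\xi_i=0$ or $\xi_i\ge(S_K\inf M/\varepsilon_K)^{\rho}$ with $\rho=(1-\tfrac{p}{2p_{r;s}^{\uparrow *}})^{-1}=\tfrac{N(r+1)}{N+prs}$. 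Rerunning the chain \eqref{contradictionforsmallW} --- where the Kirchhoff term contributes $(\tfrac1p-\beta\theta)\mathcal M(\|u_n\|_{D^{s,p}}^p)\ge0$, the convolution terms carry $\varepsilon_K$, and $\|u_0\|_{p_s^*}\le\ell^{-1}(C\varepsilon_W)$ comes from estimating $c$ from below via the weak lower semicontinuity of $\mathcal M(\|\cdot\|_{D^{s,p}}^p)$ and the weak continuity of $u\mapsto\int_{\mathbb{R}^N}Wf'(u)u\,dx$ --- gives $\varepsilon_K(\beta-\tfrac1{2\alpha_g})\xi_i\le C\varepsilon_W^{\,p/(p-\min\{q_1,q_2\})}$. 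If some $\xi_i>0$, the two bounds combine into
\[
(S_K\inf M)^{\rho}\le C\,\varepsilon_K^{\,\rho-1}\,\varepsilon_W^{\,p/(p-\min\{q_1,q_2\})},
\]
and since $\rho-1=\tfrac{r(N-ps)}{N+prs}>0$ (using $N>ps$, one of the standing hypotheses) the right-hand side is strictly smaller than the fixed left-hand side once $\varepsilon_W$ is small for fixed $\varepsilon_K$ (the first conclusion) or once $\varepsilon_K$ is small for fixed $\varepsilon_W$ (the second conclusion); hence every $\xi_i=0$. The escaping masses are excluded by the same mechanism, using \eqref{xi and nu at infty}--\eqref{mugeqnuatinfty} (unchanged), the identity $M(t_\ast)\mu_\infty+V_\infty=\varepsilon_K(\xi_\infty+\xi_\infty')$ obtained by testing against $u_n\eta_R$, and $W_\infty=0$ as in Section~3. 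Once all concentration and escaping masses vanish, $J[u_n]\to J[u_0]$, $\int_{\mathbb{R}^N}V|u_n|^p\,dx\to\int_{\mathbb{R}^N}V|u_0|^p\,dx$ and $\int_{\mathbb{R}^N}Wf'(u_n)u_n\,dx\to\int_{\mathbb{R}^N}Wf'(u_0)u_0\,dx$ as before, and the $(S_+)$ argument closes the $(PS)_c$ proof.

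Finally, one truncates $I_{\mathcal M}$ to $\tilde I_{\mathcal M}$ exactly as $I$ was truncated to $\tilde I$ in Section~4, starting from
\[
I_{\mathcal M}[u]\ge C_1\|u\|_E^p-C_2\varepsilon_K\bigl(\|u\|_E^{2\hat{p}_g}+\|u\|_E^{2p_{r;s}^{\uparrow *}}\bigr)-C_3\varepsilon_W\bigl(\|u\|_E^{q_1}+\|u\|_E^{q_2}\bigr),
\]
where the first term uses (M1); since $\max\{q_1,q_2\}<p<2\hat{p}_g\le2p_{r;s}^{\uparrow *}$, the associated one-variable profile has the double-well shape required by the truncation whenever $\varepsilon_W$ is small (with $\varepsilon_K$ fixed, for the first conclusion) or $\varepsilon_K$ is small (with $\varepsilon_W$ fixed, for the second conclusion; here shrinking $\varepsilon_K$ makes $C_2\varepsilon_K$ negligible against the $\|u\|_E^p$ term). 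The genus estimate $\gamma(\tilde I_{\mathcal M}^{-1}((-\infty,\delta_n]))\ge n$ is obtained verbatim, the only structural inputs being $\alpha_f<p$ and the positivity of $K$ and $W$. Hence $\tilde I_{\mathcal M}$ is even, $C^1$, bounded below, satisfies $(PS)_c$ for $c<0$, and carries symmetric sets of arbitrarily large genus on which it is negative, so Proposition~\ref{symmetric mountain pass} and the deformation argument from the proof of Theorem~\ref{MainTheorem} yield critical points $u_n$ of $\tilde I_{\mathcal M}$ with $\tilde I_{\mathcal M}[u_n]<0$ and $\tilde I_{\mathcal M}[u_n]\to0$; as $\tilde I_{\mathcal M}[u]<0$ forces $\tilde I_{\mathcal M}[u]=I_{\mathcal M}[u]$, these are the asserted critical points of $I_{\mathcal M}$, giving both conclusions.
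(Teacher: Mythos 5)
The paper itself offers no proof of this theorem --- it only remarks that one ``can adopt the method in \cite{Kirchhoff} and combine it with our method \dots though we omit the proof'' --- so there is no written argument to compare yours against line by line. What you propose is precisely the implementation that remark calls for: carry the weight $M(\|u\|_{D^{s,p}}^p)$ and the factor $\varepsilon_K$ through Sections 2--4, use $\mathcal M(t)\ge(\inf M)\,t$ for coercivity and weak lower semicontinuity, use (M2) to keep the Kirchhoff contribution nonnegative in the Ambrosetti--Rabinowitz manipulation, and rerun the concentration analysis to get the threshold $\xi_i\ge((\inf M)S_K/\varepsilon_K)^{\rho}$ with $\rho>1$, whose interplay with the bound $C\varepsilon_K^{\rho-1}\varepsilon_W^{p/(p-\min\{q_1,q_2\})}$ correctly produces the two smallness regimes of the statement. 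Your replacement of the paper's norm-identification argument (comparing $I'[u_n]u_n$ with $I'[u_0]u_0$) by the $(S_+)$ property, i.e.\ testing against $u_n-u_0$, is not merely cosmetic: in the Kirchhoff setting the direct comparison only yields $M(t_*)t_*=M(\|u_0\|_{D^{s,p}}^p)\|u_0\|_{D^{s,p}}^p$, and (M1)--(M2) do not make $t\mapsto tM(t)$ injective, so the $(S_+)$ route is the right one here.

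One step does not hold as written. You fix $\beta\in(\tfrac{1}{2\alpha_g},\tfrac{1}{p\theta})$ and justify the nonemptiness of this interval by $\theta<(2N-N/r)/(N-ps)=2p_{r;s}^{\uparrow *}/p$. That bound gives $\tfrac{1}{p\theta}>\tfrac{1}{2p_{r;s}^{\uparrow *}}$, while (G2) only gives $\tfrac{1}{2\alpha_g}\ge\tfrac{1}{2p_{r;s}^{\uparrow *}}$; both endpoints lie above the same quantity, so their order is undetermined. The interval is nonempty iff $p\theta<2\alpha_g$, which is automatic when $\alpha_g=p_{r;s}^{\uparrow *}$ (the situation of \cite{Kirchhoff}, where the convolution nonlinearity is the pure critical power) but can fail when $\alpha_g$ is close to $p_{r;s}^{\downarrow *}$ and $\theta$ is close to its upper bound. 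Since the boundedness of $(PS)_c$ sequences and the whole chain culminating in \eqref{contradictionforsmallW} hinge on such a $\beta$, you should either add the compatibility hypothesis $p\theta<2\alpha_g$ (arguably the theorem as stated needs it too) or restrict $\theta$ to $[1,2\alpha_g/p)$. A lesser point: your appeal to a global compact embedding $E\hookrightarrow L^q(\mathbb{R}^N)$ ``afforded by (V)'' is stronger than anything the paper establishes --- Section 3 only uses local compactness together with Vitali/equi-integrability for the subcritical terms --- but this is easily repaired by substituting the paper's own treatment of those terms.
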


Also note that the same conclusion remains true with inhomogeneous convolution nonlinearity $g(x,u)$ instead of $g(u)$ under the following assumptions (G1'), (G2') instead of (G1), (G2).
\begin{enumerate}
\item[(G1')] $h(x,-u)=h(x,u)$, $h(x,0)=0$ and $|h_u(x,u)|\leq C(|u|^{\hat{p}_g-1}+|u|^{p_g-1})$ ($\forall x\in \mathbb{R}^N$, $\forall u\in\mathbb{R}$) for some $p_g,\hat{p}_g$ with $p<\hat{p}_g\leq p_g<p_{r;s}^{\uparrow *}$ and $C>0$ where $h(x,t)\coloneqq g(x,t)-|t|^{p_{r;s}^{\uparrow *}}$.
\item[(G2')] $0<\alpha_g h(x,u)\leq u h_u (x,u)$ ($\forall x\in \mathbb{R}^N$, $\forall u\neq 0$) for some $\alpha_g\in (p_{r;s}^{\downarrow *}, p_{r;s}^{\uparrow *}]$. 
\end{enumerate}

\end{document}